\providecommand{\U}[1]{\protect \rule{.1in}{.1in}}
\newtheorem{theorem}{Theorem}[section]
\newtheorem{definition}[theorem]{Definition}
\newtheorem{example}[theorem]{Example}
\newtheorem{lemma}[theorem]{Lemma}
\newtheorem{proposition}[theorem]{Proposition}
\newtheorem{remark}[theorem]{Remark}
\newenvironment{proof}[1][Proof]{\noindent \textbf{#1.} }{\  $\Box$}
\numberwithin{equation}{section}
\begin{document}

\title{BSDEs driven by $G$-Brownian motion under degenerate case and its application
to the regularity of fully nonlinear PDEs}
\author{Mingshang Hu \thanks{Zhongtai Securities Institute for Financial Studies,
Shandong University, Jinan, Shandong 250100, PR China. humingshang@sdu.edu.cn.
Research supported by National Key R\&D Program of China (No. 2018YFA0703900)
and NSF (No. 11671231). }
\and Shaolin Ji\thanks{Zhongtai Securities Institute for Financial Studies,
Shandong University, Jinan, Shandong 250100, PR China. jsl@sdu.edu.cn.
Research supported by NSF (No. 11971263 and 11871458). }
\and Xiaojuan Li\thanks{Zhongtai Securities Institute for Financial Studies,
Shandong University, Jinan 250100, China. Email: lixiaojuan@mail.sdu.edu.cn.} }
\maketitle

\textbf{Abstract}. In this paper, we obtain the existence and uniqueness
theorem for backward stochastic differential equation driven by $G$-Brownian
motion ($G$-BSDE) under degenerate case. Moreover, we propose a new
probabilistic method based on the representation theorem of $G$-expectation
and weak convergence to obtain the regularity of fully nonlinear PDE
associated to $G$-BSDE.

{\textbf{Key words}. } $G$-expectation; $G$-Brownian motion; Backward
stochastic differential equation; Fully nonlinear PDE

\textbf{AMS subject classifications.} 60H10

\addcontentsline{toc}{section}{\hspace*{1.8em}Abstract}

\section{Introduction}

Motivated by volatility uncertainty in finance, Peng \cite{Peng2004, Peng2005,
P07a, P08a} introduced the notions of $G$-expectation $\mathbb{\hat{E}}%
[\cdot]$ and $G$-Brownian motion $B$ for each monotone and sublinear function
$G:\mathbb{S}_{d}\rightarrow \mathbb{R}$. The Ito's calculus with respect to
$G$-Brownian motion was constructed. Furthermore, he studied stochastic
differential equation driven by $G$-Brownian motion ($G$-SDE) and a special
type of backward stochastic differential equation (BSDE) containing only the
solution $Y$, and then established \ the relevant theory. Denis et al.
\cite{DHP11} (see also \cite{HP09}) obtained that the $G$-expectation can be
represented as an upper expectation over a family of weakly compact and
non-dominated probability measures $\mathcal{P}$, and gave the
characterizations of some spaces by inner capacity associated to $\mathcal{P}%
$. By quasi-surely stochastic analysis based on outer capacity, Denis and
Martini \cite{DenisMartini2006} made a great contribution to study
super-pricing of contingent claims under volatility uncertainty. The
relationship between these two capacities has been clearly explained in notes
and comments of Chapter 6 in \cite{P2019}.

Hu et al. \cite{HJPS1} studied the following BSDE driven by $G$-Brownian
motion ($G$-BSDE)%
\begin{equation}
Y_{t}=\xi+\int_{t}^{T}f(s,Y_{s},Z_{s})ds+\int_{t}^{T}g(s,Y_{s},Z_{s})d\langle
B\rangle_{s}-\int_{t}^{T}Z_{s}dB_{s}-(K_{T}-K_{t}) \label{e0-1}%
\end{equation}
under the non-degenerate $G$, i.e., there exists a constant $\underline
{\sigma}^{2}>0$ such that
\[
G(A)-G(B)\geq \frac{1}{2}\underline{\sigma}^{2}\mathrm{tr}[A-B]\text{ for
}A\geq B.
\]
They proved that the above $G$-BSDE has a unique solution $(Y,Z,K)$, where $K$
is a non-increasing $G$-martingale with $K_{0}=0$. Soner et al. \cite{STZ11}
studied a new type of fully nonlinear BSDE, called $2$BSDE, by different
formulation and method, and obtained the deep result of the existence and
uniqueness theorem for $2$BSDE. For recent advances in these two directions,
the reader may refer to \cite{DK, HJ0, HJ1, HYH, LPH, LRT, MPZ, PZ} and the
references therein.

The key step to obtain the solution of $G$-BSDE (\ref{e0-1}) under
non-degenerate $G$ is to use Krylov's regularity estimate for fully nonlinear
PDEs (see Appendix C.4 in \cite{P2019}). But under degenerate $G$, we have to
get round the difficulty that the regularity estimation condition (see
Definition C.4.3 in \cite{P2019}) is not satisfied. A natural idea is to
construct a family of non-degenerate $G_{\varepsilon}$ with $\varepsilon
\in(0,\varepsilon_{0}]$ such that $G_{\varepsilon}\uparrow G$ as
$\varepsilon \downarrow0$. The corresponding $G_{\varepsilon}$-expectation and
the set of probability measures are denoted by $\mathbb{\hat{E}}^{\varepsilon
}[\cdot]$ and $\mathcal{P}^{\varepsilon}$, respectively. By the definition of
$G$-expectation, we know that $\mathbb{\hat{E}}^{\varepsilon}[X]\uparrow
\mathbb{\hat{E}}[X]$ for $X\in L_{G}^{1}(\Omega_{T})$ and $\mathcal{P}$ is the
closure of $\mathcal{P}_{1}:=\cup_{\varepsilon>0}\mathcal{P}^{\varepsilon}$
under the topology of weak convergece. It is important to note that the
quasi-surely stochastic analysis with respect to $\mathcal{P}$ (i.e.
$\mathcal{P}$-q.s.) and $\mathcal{P}_{1}$ (i.e. $\mathcal{P}_{1}$-q.s.) are
different (see \cite{HWZ}). Following the method proposed in the proof of
Proposition A.1. in \cite{STZ}, we can get a process $Z$ in the $\mathcal{P}%
_{1}$-q.s. sence such that%
\[
\inf_{\eta \in M^{0}(0,T)}\mathbb{\hat{E}}^{\varepsilon}\left[  \left(
\int_{0}^{T}|Z_{s}-\eta_{s}|^{2}d\langle B\rangle_{s}\right)  ^{p/2}\right]
=0\text{ for }\varepsilon>0\text{, }p>1\text{,}%
\]
where the definition of $M^{0}(0,T)$ can be found in Section 3. At this point,
there is a natural misconception that $Z\in H_{G}^{2,p}(0,T;\langle B\rangle)$
holds. But we notice Sion's minimax theorem can not be used to obtain%
\[
\inf_{\eta \in M^{0}(0,T)}\sup_{\varepsilon \in(0,\varepsilon_{0}]}%
\mathbb{\hat{E}}^{\varepsilon}\left[  \left(  \int_{0}^{T}|Z_{s}-\eta_{s}%
|^{2}d\langle B\rangle_{s}\right)  ^{p/2}\right]  =\sup_{\varepsilon
\in(0,\varepsilon_{0}]}\inf_{\eta \in M^{0}(0,T)}\mathbb{\hat{E}}^{\varepsilon
}\left[  \left(  \int_{0}^{T}|Z_{s}-\eta_{s}|^{2}d\langle B\rangle_{s}\right)
^{p/2}\right]  ,
\]
because $(0,\varepsilon_{0}]$ is not compact. Therefore, whether $Z$ belongs
to $H_{G}^{2,p}(0,T;\langle B\rangle)$ remains unsolved, even for
$G$-martingale representation theorem, which is a special case of $G$-BSDE
(\ref{e0-1}), i.e., $f=g=0$. Thus, one purpose of this paper is to investigate
the existence and uniqueness theorem for $G$-BSDE (\ref{e0-1}) under
degenerate $G$.

It is well known that the theory of classical BSDEs provides a tool to study
the regularity of quasilinear PDEs (see \cite{P-P92}). However, we all know
that this classical tool is not suitable for the regularity of fully nonlinear
PDEs, and up to our knowledge there is no result on this field. So, the other
purpose of this paper is to establish the regularity of fully nonlinear PDEs
by $G$-BSDEs.

In this paper, we introduce a quite different method to study the existence
and uniqueness theorem for a type of well-posed $G$-BSDEs under degenerate $G$
(see (\ref{new-e2-4})), which has two major contributions. The first one is to
obtain the soution $(Y,Z,K)$ for $G$-BSDE under degenerate $G$ in the extended
$\tilde{G}$-expectation space, which is essential to show that $K$ is a
$G$-martingale in the key Lemma \ref{pro2-6}. The second one is to propose a
new probabilistic method based on the representation theorem of $G$%
-expectation and weak convergence to obtain the uniform lower bound for
$\partial_{xx}^{2}u_{\varepsilon}$ with $\varepsilon>0$, where $u_{\varepsilon
}$ is a soution to a fully nonlinear PDE associated to a $G_{\varepsilon}%
$-BSDE under non-degenerate $G_{\varepsilon}$ (see (\ref{new-e2-11}) and
(\ref{e2-26})). This uniform lower bound for $\partial_{xx}^{2}u_{\varepsilon
}$ plays a key role in proving $Z\in H_{G}^{2,p}(0,T;\langle B\rangle)$ in
Lemma \ref{pro2-6}, and up to our knowledge, it is completely new in the
literature because it does not depend on $\varepsilon$ as the bound by
Krylov's regularity estimate for fully nonlinear PDEs. Finally, we use the
above probabilistic method to obtain the regularity of fully nonlinear PDE
associated to $G$-BSDE under degenerate $G$.

The paper is organized as follows. In Section 2, we present some basic results
of $G$-expectations. The existence and uniqueness theorem for $G$-BSDE under
degenerate case is established in Section 3. In Section 4, we obtain the
regularity of fully nonlinear PDE associated to $G$-BSDE under degenerate $G$. \ 

\section{Preliminaries}

We recall some basic results of $G$-expectations. The readers may refer to
Peng's book \cite{P2019} for more details.

Let $T>0$ be given and let $\Omega_{T}=C_{0}([0,T];\mathbb{R}^{d})$ be the
space of $\mathbb{R}^{d}$-valued continuous functions on $[0,T]$ with
$\omega_{0}=0$. The canonical process $B_{t}(\omega):=\omega_{t}$, for
$\omega \in \Omega_{T}$ and $t\in \lbrack0,T]$. For any fixed $t\leq T$, set%
\[
Lip(\Omega_{t}):=\{ \varphi(B_{t_{1}},B_{t_{2}}-B_{t_{1}},\ldots,B_{t_{N}%
}-B_{t_{N-1}}):N\geq1,t_{1}<\cdots<t_{N}\leq t,\varphi \in C_{b.Lip}%
(\mathbb{R}^{d\times N})\},
\]
where $C_{b.Lip}(\mathbb{R}^{d\times N})$ denotes the space of bounded
Lipschitz functions on $\mathbb{R}^{d\times N}$.

Let $G:\mathbb{S}_{d}\rightarrow \mathbb{R}$ be a given monotonic and sublinear
function, where $\mathbb{S}_{d}$ denotes the set of $d\times d$ symmetric
matrices. Then there exists a unique bounded, convex and closed set
$\Sigma \subset \mathbb{S}_{d}^{+}$ such that%
\begin{equation}
G(A)=\frac{1}{2}\sup_{\gamma \in \Sigma}\mathrm{tr}[A\gamma]\text{ for }%
A\in \mathbb{S}_{d}, \label{new-e1-1}%
\end{equation}
where $\mathbb{S}_{d}^{+}$ denotes the set of $d\times d$ nonnegative
matrices. If there exists a $\underline{\sigma}^{2}>0$ such that $\gamma
\geq \underline{\sigma}^{2}I_{d}$ for any $\gamma \in \Sigma$, $G$ is called
non-degenerate. Otherwise, $G$ is called degenerate.

Peng \cite{P07a, P08a} constructed the $G$-expectation $\mathbb{\hat{E}%
}:Lip(\Omega_{T})\rightarrow \mathbb{R}$ and the conditional $G$-expectation
$\mathbb{\hat{E}}_{t}:Lip(\Omega_{T})\rightarrow Lip(\Omega_{t})$ as follows:

\begin{description}
\item[(i)] For each $s_{1}\leq s_{2}\leq T$ and $\varphi \in C_{b.Lip}%
(\mathbb{R}^{d})$, define $\mathbb{\hat{E}}[\varphi(B_{s_{2}}-B_{s_{1}%
})]=u(s_{2}-s_{1},0)$, where $u$ is the viscosity solution (see \cite{CIP}) of
the following $G$-heat equation:%
\[
\partial_{t}u-G(D_{x}^{2}u)=0,\ u(0,x)=\varphi(x).
\]

\item[(ii)] For each $X=\varphi(B_{t_{1}},B_{t_{2}}-B_{t_{1}},\ldots,B_{t_{N}%
}-B_{t_{N-1}})\in Lip(\Omega_{T})$, define
\[
\mathbb{\hat{E}}_{t_{i}}[X]=\varphi_{i}(B_{t_{1}},\ldots,B_{t_{i}}-B_{t_{i-1}%
})\text{ for }i=N-1,\ldots,1\text{ and }\mathbb{\hat{E}}[X]=\mathbb{\hat{E}%
}[\varphi_{1}(B_{t_{1}})],
\]
where $\varphi_{N-1}(x_{1},\ldots,x_{N-1}):=\mathbb{\hat{E}}[\varphi
(x_{1},\ldots,x_{N-1},B_{t_{N}}-B_{t_{N-1}})]$ for $(x_{1},\ldots,x_{N-1}%
)\in \mathbb{R}^{d\times(N-1)}$ and
\[
\varphi_{i}(x_{1},\ldots,x_{i}):=\mathbb{\hat{E}}[\varphi_{i+1}(x_{1}%
,\ldots,x_{i},B_{t_{i+1}}-B_{t_{i}})]\text{ for }i=N-2,\ldots,1.
\]

\end{description}

The space $(\Omega_{T},Lip(\Omega_{T}),\mathbb{\hat{E}},(\mathbb{\hat{E}}%
_{t})_{t\in \lbrack0,T]})$ is a consistent sublinear expectation space, where
$\mathbb{\hat{E}}_{0}=\mathbb{\hat{E}}$. The canonical process $(B_{t}%
)_{t\in \lbrack0,T]}$ is called the $G$-Brownian motion under $\mathbb{\hat{E}%
}$.

For each $t\in \lbrack0,T]$, denote by $L_{G}^{p}(\Omega_{t})$ the completion
of $Lip(\Omega_{t})$ under the norm $||X||_{L_{G}^{p}}:=(\mathbb{\hat{E}%
}[|X|^{p}])^{1/p}$ for $p\geq1$. It is clear that $\mathbb{\hat{E}}_{t}$ can
be continuously extended to $L_{G}^{1}(\Omega_{T})$ under the norm
$||\cdot||_{L_{G}^{1}}$.

The following theorem is the representation theorem of $G$-expectation.

\begin{theorem}
(\cite{DHP11, HP09}) There exists a unique weakly compact and convex set of
probability measures $\mathcal{P}$ on $(\Omega_{T},\mathcal{B}(\Omega_{T}))$
such that%
\[
\mathbb{\hat{E}}[X]=\sup_{P\in \mathcal{P}}E_{P}[X]\text{ for all }X\in
L_{G}^{1}(\Omega_{T}),
\]
where $\mathcal{B}(\Omega_{T})=\sigma(B_{s}:s\leq T)$.
\end{theorem}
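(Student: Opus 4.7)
The plan is to build $\mathcal{P}$ in two stages: first obtain a representation of $\mathbb{\hat{E}}$ on the vector lattice $Lip(\Omega_{T})$ via a Hahn--Banach / Daniell--Stone argument, then extend to $L_{G}^{1}(\Omega_{T})$ by density and $L_{G}^{1}$-continuity. The heart of the matter is to verify that $\mathbb{\hat{E}}$ is continuous from above on $Lip(\Omega_{T})$; this is precisely what permits upgrading abstract finitely additive linear functionals to genuine countably additive Borel probability measures on $(\Omega_{T},\mathcal{B}(\Omega_{T}))$.

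The first step is essentially algebraic. Since $\mathbb{\hat{E}}$ is a monotone sublinear functional on $Lip(\Omega_{T})$ with $\mathbb{\hat{E}}[1]=1$, a standard Hahn--Banach separation yields
\[
\mathbb{\hat{E}}[X]=\sup_{\ell\in\Lambda}\ell(X),\qquad X\in Lip(\Omega_{T}),
\]
where $\Lambda$ is the set of linear functionals $\ell\leq\mathbb{\hat{E}}$ with $\ell(1)=1$. Monotonicity of $\mathbb{\hat{E}}$ transfers to each $\ell$, so at this stage every $\ell$ corresponds to a finitely additive probability on the algebra generated by cylinders, but a priori not to a countably additive measure.

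The main obstacle, and the only genuinely hard step, is to prove continuity from above: whenever $X_{n}\in Lip(\Omega_{T})$ satisfies $X_{n}\downarrow 0$ pointwise, $\mathbb{\hat{E}}[X_{n}]\downarrow 0$. I would proceed via tightness. The viscosity representation of $\mathbb{\hat{E}}[\varphi(B_{t_{2}}-B_{t_{1}})]$ through the $G$-heat equation, combined with $0\leq G(A)\leq C|A|$, yields moment estimates of the form $\mathbb{\hat{E}}[|B_{t}-B_{s}|^{2m}]\leq C_{m}|t-s|^{m}$ for every $m\geq 1$. A Kolmogorov--Chentsov argument adapted to the sublinear setting then bounds $\mathbb{\hat{E}}$ applied to the modulus of continuity of paths, and Arzel\`a--Ascoli produces, for each $\delta>0$, a compact set $K_{\delta}\subset\Omega_{T}$ with $\mathbb{\hat{E}}[d(\cdot,K_{\delta})\wedge 1]<\delta$. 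Since $X_{n}\downarrow 0$ pointwise with each $X_{n}$ bounded and continuous, Dini's theorem forces uniform convergence on $K_{\delta}$; truncation and letting $\delta\downarrow 0$ finishes the proof.

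With continuity from above in hand, the Daniell--Stone theorem upgrades each $\ell\in\Lambda$ to a Borel probability $P_{\ell}$ on $(\Omega_{T},\mathcal{B}(\Omega_{T}))$ with $\ell(X)=E_{P_{\ell}}[X]$ on $Lip(\Omega_{T})$. Let $\mathcal{P}_{0}:=\{P_{\ell}:\ell\in\Lambda\}$ and let $\mathcal{P}$ be the weak closure of its convex hull. The uniform modulus-of-continuity estimate from Step 3 makes $\mathcal{P}_{0}$ tight, so by Prokhorov's theorem $\mathcal{P}$ is weakly compact, and convexity and closedness are built in by construction. Since $Lip(\Omega_{T})$ is $||\cdot||_{L_{G}^{1}}$-dense in $L_{G}^{1}(\Omega_{T})$ and both sides of the desired identity are $L_{G}^{1}$-continuous (the right-hand side because tightness implies uniform integrability on $\mathcal{P}$), the representation extends to all of $L_{G}^{1}(\Omega_{T})$. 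Uniqueness of $\mathcal{P}$ is a standard separation argument: if $\mathcal{P}$ and $\mathcal{P}'$ are two weakly compact convex sets with the same support functional on $C_{b}(\Omega_{T})$, then Hahn--Banach separation in the weak topology on $C_{b}(\Omega_{T})^{\ast}$ forces $\mathcal{P}=\mathcal{P}'$.
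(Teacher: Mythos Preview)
The paper does not prove this statement; it is quoted from \cite{DHP11, HP09} as a known result. So there is no ``paper's own proof'' to compare against directly, but it is worth noting that the cited references take a route quite different from yours: rather than an abstract Hahn--Banach/Daniell--Stone argument, they \emph{construct} $\mathcal{P}$ explicitly as the weak closure of the set of laws of $\int_{0}^{\cdot}\sigma_{s}\,dW_{s}$ under a reference Wiener measure, with $\sigma_{s}\sigma_{s}^{T}\in\Sigma$, and then verify $\sup_{P\in\mathcal{P}}E_{P}=\mathbb{\hat{E}}$ on cylinder functions by identifying the associated HJB equation with the $G$-heat equation. That approach gives the measures from the outset, so tightness and weak compactness are read off from the uniform moment bounds on $\int\sigma\,dW$ directly, and no continuity-from-above step is needed.

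Your abstract approach is viable, but there is a genuine gap in Step~3. You want to bound $\mathbb{\hat{E}}[d(\cdot,K_{\delta})\wedge 1]$ and invoke Dini on $K_{\delta}$, yet at that stage $\mathbb{\hat{E}}$ is defined only on $Lip(\Omega_{T})$, and neither $d(\cdot,K_{\delta})\wedge 1$ nor the path modulus of continuity is a cylinder function. The Kolmogorov--Chentsov estimate you allude to lives on finite-dimensional marginals, not on $\Omega_{T}$, so you cannot yet speak of $\mathbb{\hat{E}}$ applied to it. The standard repair is to run Daniell--Stone first on each finite-dimensional marginal (continuity from above on $C_{b.Lip}(\mathbb{R}^{dN})$ is immediate from the moment bounds and $\sigma$-compactness of $\mathbb{R}^{dN}$), obtain a consistent family of measures, pass to $(\mathbb{R}^{d})^{[0,T]}$ via Kolmogorov extension, and only then use the cylinder estimate $E_{P}[|B_{t}-B_{s}|^{4}]\leq C|t-s|^{2}$ together with Kolmogorov's continuity criterion \emph{under each $P$} to land the measures on $\Omega_{T}$ and obtain tightness there. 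A smaller point: your justification that ``tightness implies uniform integrability on $\mathcal{P}$'' for the extension to $L_{G}^{1}$ is not correct as stated; the right (and simpler) reason is that $E_{P}[|X|]\leq\mathbb{\hat{E}}[|X|]$ for every $P\in\mathcal{P}$, so $P\mapsto E_{P}[X]$ is $L_{G}^{1}$-equicontinuous.
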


For this $\mathcal{P}$, define%
\[
\mathbb{L}^{p}(\Omega_{t}):=\left \{  X\in \mathcal{B}(\Omega_{t}):\sup
_{P\in \mathcal{P}}E_{P}[|X|^{p}]<\infty \right \}  \text{ for }p\geq1.
\]
It is easy to check that $L_{G}^{p}(\Omega_{t})\subset \mathbb{L}^{p}%
(\Omega_{t})$. For each $X\in \mathbb{L}^{1}(\Omega_{T})$,
\[
\mathbb{\hat{E}}[X]:=\sup_{P\in \mathcal{P}}E_{P}[X]
\]
is still called the $G$-expectation.

The capacity associated to $\mathcal{P}$ is defined by
\[
c(A):=\sup_{P\in \mathcal{P}}P(A)\text{ for }A\in \mathcal{B}(\Omega_{T}).
\]
A set $A\in \mathcal{B}(\Omega_{T})$ is polar if $c(A)=0$. A property holds
\textquotedblleft quasi-surely" (q.s. for short) if it holds outside a polar
set. In the following, we do not distinguish two random variables $X$ and $Y$
if $X=Y$ q.s.

\begin{definition}
A process $(M_{t})_{t\leq T}$ is called a $G$-martingale if $M_{t}\in
L_{G}^{1}(\Omega_{t})$ and $\mathbb{\hat{E}}_{s}[M_{t}]=M_{s}$ for any $0\leq
s\leq t\leq T$.
\end{definition}

The following Doob's inequality for $G$-martingale can be found in \cite{STZ,
Song11}. The following proof is based on \cite{HJL, STZ}.

\begin{theorem}
\label{th1-1}Let $1\leq p<p^{\prime}$ and $\xi \in L_{G}^{p^{\prime}}%
(\Omega_{T})$. Then
\begin{equation}
\left(  \hat{\mathbb{E}}\left[  \sup_{t\leq T}\left(  \hat{\mathbb{E}}%
_{t}[|\xi|]\right)  ^{p}\right]  \right)  ^{1/p}\leq \left(  \hat{\mathbb{E}%
}\left[  \sup_{t\leq T}\hat{\mathbb{E}}_{t}[|\xi|^{p}]\right]  \right)
^{1/p}\leq C\left(  \hat{\mathbb{E}}[|\xi|^{p^{\prime}}]\right)
^{1/p^{\prime}}, \label{e1-2}%
\end{equation}
where%
\[
C=\left(  1+\frac{p}{p^{\prime}-p}\right)  ^{1/p}.
\]

\end{theorem}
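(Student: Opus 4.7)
The left-hand inequality is an immediate consequence of the conditional Jensen inequality for the sublinear expectation. Since $|\xi|\geq 0$ and $x\mapsto x^p$ is convex and non-decreasing on $[0,\infty)$ for $p\geq 1$, writing $x^p=\sup\{ax+b:ay+b\leq y^p\text{ for all }y\geq 0\}$ and applying the linearity and monotonicity of $\hat{\mathbb{E}}_t$ to each affine minorant (all of which have nonnegative slope) gives $(\hat{\mathbb{E}}_t[|\xi|])^p\leq \hat{\mathbb{E}}_t[|\xi|^p]$ quasi-surely for every $t\in[0,T]$. Taking the supremum over $t\leq T$, applying $\hat{\mathbb{E}}$, and extracting the $p$-th root yields the left inequality of (\ref{e1-2}).

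For the right-hand inequality my plan is to emulate the classical Doob argument in the capacity setting, following the strategy of [HJL, STZ]. Set $M_t:=\hat{\mathbb{E}}_t[|\xi|^p]$, which is a non-negative $G$-martingale with $M_T=|\xi|^p$ q.s. The main step is to establish the $G$-analogue of the weak-type maximal estimate
\[
\lambda\, c\!\Bigl(\sup_{t\leq T} M_t>\lambda\Bigr)\leq \hat{\mathbb{E}}\bigl[M_T\,\mathbf{1}_{\{\sup_{t\leq T} M_t>\lambda\}}\bigr]\qquad\text{for every }\lambda>0.
\]
I would obtain this by first approximating $\xi$ by elements of $Lip(\Omega_T)$, introducing the stopping time $\tau_\lambda:=\inf\{t\leq T:M_t>\lambda\}\wedge T$, invoking the tower identity $\hat{\mathbb{E}}[M_{\tau_\lambda}\mathbf{1}_A]=\hat{\mathbb{E}}[M_T\mathbf{1}_A]$ for events $A$ measurable at time $\tau_\lambda$, and using that $M_{\tau_\lambda}\geq\lambda$ on the event in question. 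Once the weak estimate is in hand, the strong bound follows from the layer-cake identity $\hat{\mathbb{E}}[X]=\int_0^\infty c(X>\lambda)\,d\lambda$ (valid for non-negative $X\in L_G^1$) applied to $X=(\sup_{t\leq T} M_t)^{p'/p}$, combined with Fubini and H\"older's inequality with conjugate exponents $p'/p$ and $p'/(p'-p)$; a routine computation then reproduces the sharp constant $C=(1+p/(p'-p))^{1/p}$.

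The main technical obstacle is the rigorous handling of the stopping-time argument inside the quasi-sure framework. Unlike the classical situation, $M$ is merely a $P$-supermartingale under each individual $P\in\mathcal{P}$, so a direct appeal to Doob's inequality under each $P$ gives a bound in the wrong direction. Following [HJL, STZ], I would circumvent this by proving the stopped tower identity first for $\xi\in Lip(\Omega_T)$, where $\tau_\lambda$ can be approximated by finitely-valued stopping times and the path regularity of $M$ is automatic, and then pass to the limit using the continuity of $\hat{\mathbb{E}}_t$ in the $L_G^{p'}$-norm together with a Fatou-type argument for the supremum.
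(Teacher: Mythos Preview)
Your treatment of the left-hand inequality via conditional Jensen is fine and matches the paper.

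For the right-hand inequality, however, the classical Doob bootstrapping you describe does not go through in the sublinear setting, and this is precisely where the paper's argument diverges from yours. The weak-type maximal estimate
\[
\lambda\,c\Bigl(\sup_{t\leq T}M_t\geq\lambda\Bigr)\leq\hat{\mathbb{E}}\bigl[M_T\mathbf{1}_{\{\sup_t M_t\geq\lambda\}}\bigr]
\]
is indeed correct (and the paper establishes it via discrete stopping times and an optional-sampling identity from \cite{HJL}). The trouble is the next step. Your layer-cake ``identity'' $\hat{\mathbb{E}}[X]=\int_0^\infty c(X>\lambda)\,d\lambda$ is in fact only an inequality ($\leq$), which is harmless for an upper bound; but the subsequent Fubini step---interchanging $\int_0^\infty(\cdot)\,d\lambda$ with $\hat{\mathbb{E}}[\cdot]$---goes in the \emph{wrong direction}. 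For non-negative integrands,
\[
\int_0^\infty\hat{\mathbb{E}}[f_\lambda]\,d\lambda=\int_0^\infty\sup_{P\in\mathcal{P}}E_P[f_\lambda]\,d\lambda\ \geq\ \sup_{P\in\mathcal{P}}\int_0^\infty E_P[f_\lambda]\,d\lambda=\hat{\mathbb{E}}\Bigl[\int_0^\infty f_\lambda\,d\lambda\Bigr],
\]
whereas the bootstrap needs $\leq$ here. Since the optimal $P_\lambda$ for $\hat{\mathbb{E}}[M_T\mathbf{1}_{\{\sup M>\lambda\}}]$ genuinely depends on $\lambda$, you cannot collapse the right-hand side to a single expectation and then apply H\"older. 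Working under a fixed $P$ instead does not help either: as you correctly observe, $M$ is only a $P$-supermartingale, so the weak-type bound under $P$ fails in the required direction.

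The paper circumvents this obstacle by \emph{not} attempting the Fubini/H\"older bootstrap. It applies H\"older directly inside the weak-type estimate to obtain the Chebyshev-type bound
\[
c\Bigl(\sup_i M_{t_i}\geq\lambda\Bigr)\leq\lambda^{-p'}\hat{\mathbb{E}}[|\xi|^{p'}],
\]
whose right-hand side no longer contains any random indicator. For each fixed $P\in\mathcal P$ one then writes $E_P[\sup_i M_{t_i}^p]=p\int_0^\infty\lambda^{p-1}P(\sup_i M_{t_i}\geq\lambda)\,d\lambda$, splits the integral at a threshold $\lambda_0$, bounds the tail by $\frac{p}{p'-p}\lambda_0^{p-p'}\hat{\mathbb{E}}[|\xi|^{p'}]$, and optimises over $\lambda_0$. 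Because this upper bound is uniform in $P$, taking the supremum over $\mathcal P$ yields the constant $1+\frac{p}{p'-p}$. The structural point is that once the weak bound is decoupled from the random event, the layer-cake under each linear $E_P$ works cleanly, with no tension against the sublinearity of $\hat{\mathbb{E}}$.
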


\begin{proof}
By the definition of $L_{G}^{p^{\prime}}(\Omega_{T})$, we only need to prove
the inequality for $\xi \in Lip(\Omega_{T})$. Define%
\[
M_{t}=\hat{\mathbb{E}}_{t}[|\xi|]\text{ for }t\leq T.
\]
For each fixed $\lambda>0$ and integer $n\geq1$, define a stopping time%
\[
\tau=\inf \{t_{i}:M_{t_{i}}\geq \lambda,i=0,\ldots,n\},
\]
where $t_{i}=iT/n$ and $\inf \emptyset=\infty$. It is easy to check that%
\[
\{ \tau=t_{i}\} \in \mathcal{B}(\Omega_{t_{i}}),\text{ }\{ \tau=\infty \}
\in \mathcal{B}(\Omega_{T})\text{ and }\{ \tau=t_{i}\} \cap \{ \tau
=t_{j}\}=\emptyset \text{ for }i\not =j\text{.}%
\]
By Proposition 3.9 in \cite{HJL}, we have%
\[
\hat{\mathbb{E}}\left[  \sum_{i=0}^{n}|\xi|I_{\{ \tau=t_{i}\}}+0I_{\{
\tau=\infty \}}\right]  =\hat{\mathbb{E}}\left[  \sum_{i=0}^{n}\hat{\mathbb{E}%
}_{t_{i}}[|\xi|]I_{\{ \tau=t_{i}\}}+\hat{\mathbb{E}}_{T}[0]I_{\{ \tau
=\infty \}}\right]  ,
\]
which implies%
\[
\hat{\mathbb{E}}\left[  |\xi|I_{\{ \tau \leq t_{n}\}}\right]  =\hat{\mathbb{E}%
}\left[  \sum_{i=0}^{n}M_{t_{i}}I_{\{ \tau=t_{i}\}}\right]  \geq \lambda
\hat{\mathbb{E}}\left[  I_{\{ \tau \leq t_{n}\}}\right]  .
\]
Note that $\{ \tau \leq t_{n}\}=\{ \sup_{i}M_{t_{i}}\geq \lambda \}$, then we
have%
\[
\lambda \hat{\mathbb{E}}\left[  I_{\{ \sup_{i}M_{t_{i}}\geq \lambda \}}\right]
\leq \hat{\mathbb{E}}\left[  |\xi|I_{\{ \sup_{i}M_{t_{i}}\geq \lambda \}}\right]
\leq \left(  \hat{\mathbb{E}}[|\xi|^{p^{\prime}}]\right)  ^{1/p^{\prime}%
}\left(  \hat{\mathbb{E}}\left[  I_{\{ \sup_{i}M_{t_{i}}\geq \lambda \}}\right]
\right)  ^{1/q^{\prime}},
\]
where $1/p^{\prime}+1/q^{\prime}=1$. Thus,%
\[
\hat{\mathbb{E}}\left[  I_{\{ \sup_{i}M_{t_{i}}\geq \lambda \}}\right]
\leq \frac{1}{\lambda^{p^{\prime}}}\hat{\mathbb{E}}\left[  |\xi|^{p^{\prime}%
}\right]  \text{ for each }\lambda>0\text{.}%
\]
For each fixed $\lambda_{0}>0$, we have%
\begin{align*}
\hat{\mathbb{E}}\left[  \sup_{i}M_{t_{i}}^{p}\right]   &  =\sup_{P\in
\mathcal{P}}E_{P}\left[  \sup_{i}M_{t_{i}}^{p}\right] \\
&  =\sup_{P\in \mathcal{P}}p\int_{0}^{\infty}P(\sup_{i}M_{t_{i}}\geq
\lambda)\lambda^{p-1}d\lambda \\
&  \leq \int_{0}^{\lambda_{0}}p\lambda^{p-1}d\lambda+\int_{\lambda_{0}}%
^{\infty}p\lambda^{p-1-p^{\prime}}\hat{\mathbb{E}}[|\xi|^{p^{\prime}}%
]d\lambda \\
&  =(\lambda_{0})^{p}+\frac{p\lambda_{0}^{p-p^{\prime}}}{p^{\prime}-p}%
\hat{\mathbb{E}}[|\xi|^{p^{\prime}}].
\end{align*}
Taking $\lambda_{0}=\left(  \hat{\mathbb{E}}[|\xi|^{p^{\prime}}]\right)
^{1/p^{\prime}}$, we get%
\[
\hat{\mathbb{E}}\left[  \sup_{i}M_{t_{i}}^{p}\right]  \leq \left(  1+\frac
{p}{p^{\prime}-p}\right)  \left(  \hat{\mathbb{E}}[|\xi|^{p^{\prime}}]\right)
^{p/p^{\prime}}.
\]
Since $|\xi|\in L_{ip}(\Omega_{T})$, we have%
\[
\sup_{i}M_{t_{i}}^{p}\uparrow \sup_{t\leq T}M_{t}^{p}.
\]
Then we obtain%
\begin{equation}
\hat{\mathbb{E}}\left[  \sup_{t\leq T}\left(  \hat{\mathbb{E}}_{t}%
[|\xi|]\right)  ^{p}\right]  \leq \left(  1+\frac{p}{p^{\prime}-p}\right)
\left(  \hat{\mathbb{E}}[|\xi|^{p^{\prime}}]\right)  ^{p/p^{\prime}}.
\label{e1-1}%
\end{equation}

It is obvious that $\left(  \hat{\mathbb{E}}_{t}[|\xi|]\right)  ^{p}\leq
\hat{\mathbb{E}}_{t}[|\xi|^{p}]$. Since inequality (\ref{e1-1}) holds for
$|\xi|^{p}\in L_{ip}(\Omega_{T})$ and $1<p^{\prime}/p$, we have%
\[
\hat{\mathbb{E}}\left[  \sup_{t\leq T}\hat{\mathbb{E}}_{t}[|\xi|^{p}]\right]
\leq \left(  1+\frac{1}{p^{\prime}/p-1}\right)  \left(  \hat{\mathbb{E}}%
[|\xi|^{p^{\prime}}]\right)  ^{p/p^{\prime}}=\left(  1+\frac{p}{p^{\prime}%
-p}\right)  \left(  \hat{\mathbb{E}}[|\xi|^{p^{\prime}}]\right)
^{p/p^{\prime}}.
\]
Thus we obtain (\ref{e1-2}).
\end{proof}

\section{BSDEs driven by $G$-Brownian motion under degenerate case}

Let $B_{t}=(B_{t}^{1},\ldots,B_{t}^{d})^{T}$ be a $d$-dimensional $G$-Brownian
motion satisfying%
\begin{equation}
G(A)=G^{\prime}(A^{\prime})+\frac{1}{2}\sum_{i=d^{\prime}+1}^{d}\bar{\sigma
}_{i}^{2}a_{i}^{+}, \label{new-e2-2}%
\end{equation}
where $d^{\prime}<d$, $A^{\prime}\in \mathbb{S}_{d^{\prime}}$, $a_{i}%
\in \mathbb{R}$ for $d^{\prime}<i\leq d$,
\[
A=\left(
\begin{array}
[c]{cccc}%
A^{\prime} & \cdots & \cdots & \cdots \\
\cdots & a_{d^{\prime}+1} & \cdots & \cdots \\
\vdots & \vdots & \ddots & \vdots \\
\cdots & \cdots & \cdots & a_{d}%
\end{array}
\right)  \in \mathbb{S}_{d},
\]
$G^{\prime}:\mathbb{S}_{d^{\prime}}\rightarrow \mathbb{R}$ is non-degenerate,
$\bar{\sigma}_{i}>0$ for $i=d^{\prime}+1,\ldots,d$. By Corollary 3.5.8 in Peng
\cite{P2019}, we know that%
\begin{equation}
(\langle B^{i},B^{j}\rangle_{t+s}-\langle B^{i},B^{j}\rangle_{t})_{i,j=1}%
^{d}\in s\Sigma \text{ for any }t\text{, }s\geq0, \label{new-e2-3}%
\end{equation}
where $\langle B^{i},B^{j}\rangle$ is the mutual variation process of $B^{i}$
and $B^{j}$, and $\Sigma \subset \mathbb{S}_{d}^{+}$ is the unique bounded,
convex and closed set satisfying (\ref{new-e1-1}). It follows from
(\ref{new-e2-2}) and (\ref{new-e2-3}) that, for any $t$, $s\geq0$,%
\begin{equation}
cs\leq \langle B^{i}\rangle_{t+s}-\langle B^{i}\rangle_{t}\leq Cs\text{ for
}i\leq d^{\prime},\text{ }\langle B^{i}\rangle_{t+s}-\langle B^{i}\rangle
_{t}\leq \bar{\sigma}_{i}^{2}s\text{ for }d^{\prime}<i\leq d, \label{new-e2-5}%
\end{equation}%
\[
\langle B^{i},B^{j}\rangle_{t}=0\text{ for }i\leq d\text{, }d^{\prime}<j\leq
d\text{, }i\not =j\text{,}%
\]
where $\langle B^{i}\rangle=\langle B^{i},B^{i}\rangle$, $0<c\leq C<\infty$.
We consider the following type of $G$-BSDE under degenerate case:%
\begin{equation}%
\begin{array}
[c]{rl}%
Y_{t}= & \xi+\int_{t}^{T}f(s,Y_{s},Z_{s}^{\prime})ds+\sum_{i,j=1}^{d^{\prime}%
}\int_{t}^{T}g_{ij}(s,Y_{s},Z_{s}^{\prime})d\langle B^{i},B^{j}\rangle_{s}\\
& +\sum_{l=d^{\prime}+1}^{d}\int_{t}^{T}g_{l}(s,Y_{s},Z_{s}^{\prime},Z_{s}%
^{l})d\langle B^{l}\rangle_{s}-\sum_{k=1}^{d}\int_{t}^{T}Z_{s}^{k}dB_{s}%
^{k}-(K_{T}-K_{t}),
\end{array}
\label{new-e2-4}%
\end{equation}
where $Z_{s}^{\prime}=(Z_{s}^{1},\ldots,Z_{s}^{d^{\prime}})^{T}$,%
\[
f,g_{ij}:[0,T]\times \Omega_{T}\times \mathbb{R}\times \mathbb{R}^{d^{\prime}%
}\rightarrow \mathbb{R}\text{, }g_{l}:[0,T]\times \Omega_{T}\times
\mathbb{R}\times \mathbb{R}^{d^{\prime}}\times \mathbb{R}\rightarrow
\mathbb{R}\text{.}%
\]
The following spaces and norms are needed to define the solution of the above
$G$-BSDE.

\begin{itemize}
\item $M^{0}(0,T):=\left \{  \eta_{t}=\sum_{k=0}^{N-1}\xi_{k}I_{[t_{k}%
,t_{k+1})}(t):N\in \mathbb{N}\text{, }0=t_{0}<\cdots<t_{N}=T,\text{ }\xi_{k}\in
Lip(\Omega_{t_{k}})\right \}  $;

\item $||\eta||_{M_{G}^{p,\bar{p}}(0,T)}:=\left(  \mathbb{\hat{E}}\left[
\left(  \int_{0}^{T}|\eta_{t}|^{p}dt\right)  ^{\bar{p}/p}\right]  \right)
^{1/\bar{p}}$, $||\eta||_{H_{G}^{p,\bar{p}}(0,T;\langle B^{i}\rangle
)}:=\left(  \mathbb{\hat{E}}\left[  \left(  \int_{0}^{T}|\eta_{t}|^{p}d\langle
B^{i}\rangle_{t}\right)  ^{\bar{p}/p}\right]  \right)  ^{1/\bar{p}}$;

\item $M_{G}^{p,\bar{p}}(0,T):=\left \{  \text{the completion of }%
M^{0}(0,T)\text{ under the norm }||\cdot||_{M_{G}^{p,\bar{p}}(0,T)}\right \}  $
for $p$, $\bar{p}\geq1$;

\item $H_{G}^{p,\bar{p}}(0,T;\langle B^{i}\rangle):=\left \{  \text{the
completion of }M^{0}(0,T)\text{ under the norm }||\cdot||_{H_{G}^{p,\bar{p}%
}(0,T;\langle B^{i}\rangle)}\right \}  $ for $p$, $\bar{p}\geq1$;

\item $M_{G}^{p}(0,T):=M_{G}^{p,p}(0,T)$, $H_{G}^{p}(0,T;\langle B^{i}%
\rangle):=H_{G}^{p,p}(0,T;\langle B^{i}\rangle)$;

\item $S^{0}(0,T):=\left \{  h(t,B_{t_{1}\wedge t},\ldots,B_{t_{N}\wedge
t}):N\in \mathbb{N}\text{, }0<t_{1}<\cdots<t_{N}=T,\text{ }h\in C_{b.Lip}%
(\mathbb{R}^{N+1})\right \}  $;

\item $||\eta||_{S_{G}^{p}(0,T)}:=\left(  \mathbb{\hat{E}}\left[  \sup_{t\leq
T}|\eta_{t}|^{p}\right]  \right)  ^{1/p}$;

\item $S_{G}^{p}(0,T):=\left \{  \text{the completion of }S^{0}(0,T)\text{
under the norm }||\cdot||_{S_{G}^{p}(0,T)}\right \}  $ for $p\geq1$.
\end{itemize}

By (\ref{new-e2-5}), we know that%
\[
c^{1/p}||\eta||_{M_{G}^{p,\bar{p}}(0,T)}\leq||\eta||_{H_{G}^{p,\bar{p}%
}(0,T;\langle B^{i}\rangle)}\leq C^{1/p}||\eta||_{M_{G}^{p,\bar{p}}%
(0,T)}\text{ for }i\leq d^{\prime}%
\]
and%
\[
||\eta||_{H_{G}^{p,\bar{p}}(0,T;\langle B^{i}\rangle)}\leq \bar{\sigma}%
_{i}^{2/p}||\eta||_{M_{G}^{p,\bar{p}}(0,T)}\text{ for }d^{\prime}<i\leq d.
\]
Thus $M_{G}^{p,\bar{p}}(0,T)=H_{G}^{p,\bar{p}}(0,T;\langle B^{i}\rangle)$ for
$i\leq d^{\prime}$ and $M_{G}^{p,\bar{p}}(0,T)\subset H_{G}^{p,\bar{p}%
}(0,T;\langle B^{i}\rangle)$ for $d^{\prime}<i\leq d$.

Throughout the paper, we use the following assumptions:

\begin{description}
\item[(H1)] There exists a $\bar{p}>1$ such that $\xi \in L_{G}^{\bar{p}%
}(\Omega_{T})$, $f(\cdot,y,z^{\prime})$, $g_{ij}(\cdot,y,z^{\prime})\in
M_{G}^{1,\bar{p}}(0,T)$ and $g_{l}(\cdot,y,z^{\prime},z)\in H_{G}^{1,\bar{p}%
}(0,T;\langle B^{l}\rangle)$ for any $y$, $z\in \mathbb{R}$, $z^{\prime}%
\in \mathbb{R}^{d^{\prime}}$, $i$, $j\leq d^{\prime}$, $d^{\prime}<l\leq d$;

\item[(H2)] There exists a constant $L>0$ such that, for any $(t,\omega
)\in \lbrack0,T]\times \Omega_{T}$, $(y,z^{\prime},z)$, $(\bar{y},\bar
{z}^{\prime},\bar{z})\in$ $\mathbb{R}\times \mathbb{R}^{d^{\prime}}%
\times \mathbb{R}$,
\[%
\begin{array}
[c]{l}%
|f(t,\omega,y,z^{\prime})-f(t,\omega,\bar{y},\bar{z}^{\prime})|+\sum
_{i,j=1}^{d^{\prime}}|g_{ij}(t,\omega,y,z^{\prime})-g_{ij}(t,\omega,\bar
{y},\bar{z}^{\prime})|\\
+\sum_{l=d^{\prime}+1}^{d}|g_{l}(t,\omega,y,z^{\prime},z)-g_{l}(t,\omega
,\bar{y},\bar{z}^{\prime},\bar{z})|\leq L(|y-\bar{y}|+|z^{\prime}-\bar
{z}^{\prime}|+|z-\bar{z}|).
\end{array}
\]

\end{description}

Now we give the $L^{p}$-solution of $G$-BSDE (\ref{new-e2-4}) for $p\in
(1,\bar{p})$.

\begin{definition}
$(Y,Z^{1},\ldots,Z^{d},K)$ is called an $L^{p}$-solution of $G$-BSDE
(\ref{new-e2-4}) if the following properties hold:

\begin{description}
\item[(i)] $Y\in S_{G}^{p}(0,T)$, $Z^{i}\in H_{G}^{2,p}(0,T;\langle
B^{i}\rangle)$ for $i\leq d$, $K$ is a non-increasing $G$-martingale with
$K_{0}=0$ and $K_{T}\in L_{G}^{p}(\Omega_{T})$;

\item[(ii)]
\[%
\begin{array}
[c]{rl}%
Y_{t}= & \xi+\int_{t}^{T}f(s,Y_{s},Z_{s}^{\prime})ds+\sum_{i,j=1}^{d^{\prime}%
}\int_{t}^{T}g_{ij}(s,Y_{s},Z_{s}^{\prime})d\langle B^{i},B^{j}\rangle_{s}\\
& +\sum_{l=d^{\prime}+1}^{d}\int_{t}^{T}g_{l}(s,Y_{s},Z_{s}^{\prime},Z_{s}%
^{l})d\langle B^{l}\rangle_{s}-\sum_{k=1}^{d}\int_{t}^{T}Z_{s}^{k}dB_{s}%
^{k}-(K_{T}-K_{t}),
\end{array}
\]
where $Z_{s}^{\prime}=(Z_{s}^{1},\ldots,Z_{s}^{d^{\prime}})^{T}$ and $t\leq T$.
\end{description}
\end{definition}

For simplicity of representation, we only give the proof for the following
$G$-BSDE:%
\begin{equation}
Y_{t}=\xi+\int_{t}^{T}f(s,Y_{s})ds+\int_{t}^{T}g(s,Y_{s},Z_{s})d\langle
B\rangle_{s}-\int_{t}^{T}Z_{s}dB_{s}-(K_{T}-K_{t}), \label{e2-1}%
\end{equation}
where $B$ is a $1$-dimensional $G$-Brownian motion, $G(a):=\frac{1}{2}%
\bar{\sigma}^{2}a^{+}$ for $a\in \mathbb{R}$ with $\bar{\sigma}>0$. The results
still hold for $G$-BSDE (\ref{new-e2-4}), and will be given at the end of this
section. In the following, the constant $C$ will change from line to line for simplicity.

\subsection{Prior estimates of $G$-BSDEs}

In this subsection, we give some useful prior estimates of $G$-BSDE
(\ref{e2-1}).

\begin{proposition}
\label{pro2-1}Suppose that $\xi_{i}$, $f_{i}$ and $g_{i}$ satisfy (H1) and
(H2) for $i=1$, $2$. Let $(Y^{i},Z^{i},K^{i})$ be the $L^{p}$-solution of
$G$-BSDE (\ref{e2-1}) corresponding to $\xi_{i}$, $f_{i}$ and $g_{i}$ for some
$p\in(1,\bar{p})$. Then there exists a positive constant $C$ depending on $p$,
$\bar{\sigma}$, $L$ and $T$ satisfying%
\begin{equation}
|\hat{Y}_{t}|^{p}\leq C\hat{\mathbb{E}}_{t}\left[  |\hat{\xi}|^{p}+\left(
\int_{t}^{T}|\hat{f}_{s}|ds\right)  ^{p}+\left(  \int_{t}^{T}|\hat{g}%
_{s}|d\langle B\rangle_{s}\right)  ^{p}\right]  , \label{e2-2}%
\end{equation}%
\begin{equation}
|Y_{t}^{i}|^{p}\leq C\hat{\mathbb{E}}_{t}\left[  |\xi_{i}|^{p}+\left(
\int_{t}^{T}|f_{i}(s,0)|ds\right)  ^{p}+\left(  \int_{t}^{T}|g_{i}%
(s,0,0)|d\langle B\rangle_{s}\right)  ^{p}\right]  \text{ for }i=1,2,
\label{e2-6}%
\end{equation}%
\begin{equation}
\hat{\mathbb{E}}\left[  \left(  \int_{0}^{T}|Z_{s}^{i}|^{2}d\langle
B\rangle_{s}\right)  ^{p/2}\right]  +\hat{\mathbb{E}}\left[  |K_{T}^{i}%
|^{p}\right]  \leq C\Lambda_{i}\text{ for }i=1,2, \label{e2-7}%
\end{equation}%
\begin{equation}
\hat{\mathbb{E}}\left[  \left(  \int_{0}^{T}|\hat{Z}_{s}|^{2}d\langle
B\rangle_{s}\right)  ^{p/2}\right]  \leq C\left \{  \mathbb{\hat{E}}\left[
\sup_{t\leq T}|\hat{Y}_{t}|^{p}\right]  +(\Lambda_{1}+\Lambda_{2}%
)^{1/2}\left(  \mathbb{\hat{E}}\left[  \sup_{t\leq T}|\hat{Y}_{t}|^{p}\right]
\right)  ^{1/2}\right \}  , \label{e2-8}%
\end{equation}
where%
\[
\Lambda_{i}=\mathbb{\hat{E}}\left[  \sup_{t\leq T}|Y_{t}^{i}|^{p}\right]
+\mathbb{\hat{E}}\left[  \left(  \int_{0}^{T}|f_{i}(s,0)|ds\right)
^{p}\right]  +\mathbb{\hat{E}}\left[  \left(  \int_{0}^{T}|g_{i}%
(s,0,0)|d\langle B\rangle_{s}\right)  ^{p}\right]  \text{ for }i=1,2,
\]
$\hat{Y}_{t}=Y_{t}^{1}-Y_{t}^{2}$, $\hat{\xi}=\xi_{1}-\xi_{2}$, $\hat{f}%
_{s}=f_{1}(s,Y_{s}^{2})-f_{2}(s,Y_{s}^{2})$, $\hat{g}_{s}=g_{1}(s,Y_{s}%
^{2},Z_{s}^{2})-g_{2}(s,Y_{s}^{2},Z_{s}^{2})$, $\hat{Z}_{t}=Z_{t}^{1}%
-Z_{t}^{2}$.
\end{proposition}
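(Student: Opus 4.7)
The plan is to follow the standard $L^p$ a priori estimate strategy for $G$-BSDEs, based on applying the $G$-Itô formula to a regularization of $|Y|^p$ and combining it with the Burkholder--Davis--Gundy inequality for $G$-stochastic integrals together with Doob's inequality (Theorem \ref{th1-1}). The four bounds are coupled, so I will prove them in the order: first a preliminary form of (\ref{e2-2}) leaving the $K$-contribution as an unknown, then (\ref{e2-7}) to bound the $K$-contribution, then feed it back to close (\ref{e2-2}) and (\ref{e2-6}), and finally (\ref{e2-8}) by the analogous argument applied to $|\hat Y|^2$.

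For (\ref{e2-2}), the key step is to apply the $G$-Itô formula to the regularized function $\Phi_\varepsilon(y):=(\varepsilon+|y|^2)^{p/2}$ at $\hat Y_s$ on $[t,T]$. The second derivative $\Phi_\varepsilon''(y)=p(\varepsilon+y^2)^{p/2-2}[\varepsilon+(p-1)y^2]$ is nonnegative, so the term $\tfrac{1}{2}\int_t^T \Phi_\varepsilon''(\hat Y_s)|\hat Z_s|^2\,d\langle B\rangle_s$ lands on the favorable side. The Lipschitz assumption (H2) gives $|f_1(s,Y^1_s)-f_2(s,Y^2_s)|\le L|\hat Y_s|+|\hat f_s|$ and the analogous bound for $g$; Young's inequality absorbs the $|\hat Z_s|$ dependence coming from $g$ into the quadratic variation term. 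Taking $\hat{\mathbb{E}}_t$ kills the $\int \hat Z\,dB$ integral (after standard truncation/approximation in $H^2_G$), while the integrals against $dK^i$ are dominated by $\sup_{s\ge t}|\Phi_\varepsilon'(\hat Y_s)|\cdot(|K^1_T-K^1_t|+|K^2_T-K^2_t|)$ and split via Young's inequality. A Gronwall iteration and $\varepsilon\downarrow 0$ deliver (\ref{e2-2}); specializing to $\xi_2=0$, $f_2=g_2=0$, which makes $Y^2\equiv 0$ a trivial solution, yields (\ref{e2-6}).

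For (\ref{e2-7}), apply the $G$-Itô formula to $|Y^i_s|^2$, producing the identity
\[
\int_0^T|Z^i_s|^2\,d\langle B\rangle_s = |\xi_i|^2-|Y^i_0|^2+2\int_0^T Y^i_s\bigl(f_i\,ds+g_i\,d\langle B\rangle_s\bigr)-2\int_0^T Y^i_s Z^i_s\,dB_s-2\int_0^T Y^i_s\,dK^i_s.
\]
Raising to the $p/2$-power, using BDG on the $dB$ martingale, Young's inequality on $\int Y^i\,dK^i$, and then reading $K^i_T$ directly off the BSDE (so that $|K^i_T|^p$ is controlled by $\sup|Y^i|^p$, the data, and the already-bounded $\int|Z^i|^2 d\langle B\rangle$ through BDG) closes a loop that produces the joint bound on $Z^i$ and $K^i_T$ in terms of $\Lambda_i$. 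Estimate (\ref{e2-8}) is the analogous Itô-on-$|\hat Y|^2$ calculation, in which the cross-terms involving $\hat f,\hat g$ are separated by Young's inequality into a piece controlled by $\hat{\mathbb{E}}[\sup_t|\hat Y_t|^p]$ and a piece controlled by $\Lambda_1+\Lambda_2$ via Cauchy--Schwarz.

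The main obstacle is the handling of the $\int_t^T\Phi_\varepsilon'(\hat Y_s)d(K^1_s-K^2_s)$ terms in (\ref{e2-2}): because $\Phi_\varepsilon'(y)$ changes sign, the monotonicity of the non-increasing $G$-martingales $K^i$ cannot be used directly, and one is forced to pay a penalty of the form $\hat{\mathbb{E}}_t\bigl[\sup_{s\ge t}|\hat Y_s|^{p-1}(|K^1_T-K^1_t|+|K^2_T-K^2_t|)\bigr]$. This couples (\ref{e2-2}) and (\ref{e2-7}), so I must order the arguments carefully and make the Young-inequality coefficients small enough to absorb the $\sup|\hat Y|^{p-1}$ factor after invoking the Doob-type inequality in Theorem \ref{th1-1} and the $L^p$ bound on $K^i_T$ derived from (\ref{e2-7}).
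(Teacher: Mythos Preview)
Your outline for (\ref{e2-7}) and (\ref{e2-8}) matches the paper: It\^o on $|Y^i|^2$ (resp.\ $|\hat Y|^2$), BDG, read $K^i_T$ off the equation, and close the two resulting inequalities against each other. The gap is in (\ref{e2-2}). You correctly flag $\int_t^T\Phi_\varepsilon'(\hat Y_s)\,d(K^1_s-K^2_s)$ as the obstacle, but your proposed fix --- bound it by $\sup|\hat Y|^{p-1}(|K^1_T-K^1_t|+|K^2_T-K^2_t|)$ and import (\ref{e2-7}) --- cannot produce the \emph{pointwise conditional} estimate (\ref{e2-2}): (\ref{e2-7}) only controls $\hat{\mathbb{E}}[|K^i_T|^p]$, and even a conditional version would leave $\Lambda_i$-type terms on the right-hand side that are simply not present in (\ref{e2-2}). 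The paper (following \cite{HJPS1}) avoids any $K$-penalty by a sign decomposition you are missing: since $\Phi_\varepsilon'(y)=p(y^2+\varepsilon)^{p/2-1}(y^+-y^-)$, the cross terms $-\hat Y_s^-\,dK^1_s$ and $-\hat Y_s^+\,dK^2_s$ are $\ge 0$ (because $dK^i\le 0$) and are dropped, while the remaining pieces $\hat Y_s^+\,dK^1_s+\hat Y_s^-\,dK^2_s$ have \emph{zero} conditional $G$-expectation by Lemma~3.4 of \cite{HJPS1} (a nonnegative bounded process integrated against a non-increasing $G$-martingale is again a $G$-martingale). So there is no coupling between (\ref{e2-2}) and (\ref{e2-7}) at all.

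There is a second missing ingredient. Applying It\^o directly to $\Phi_\varepsilon(\hat Y_s)$ and using Young on $|\hat Y_s|^{p-1}|\hat f_s|$ puts $\int_t^T|\hat f_s|^p\,ds$ on the right, not $(\int_t^T|\hat f_s|\,ds)^p$; the former is the larger quantity and is \emph{not} controlled by (H1), which only places $f$ in $M_G^{1,\bar p}$. The paper absorbs the inhomogeneous drift into an auxiliary finite-variation process $X_r$ solving, for $r\in[t,T]$,
\[
X_r=\int_t^r\bigl(f_1(s,Y^2_s-X_s)-f_2(s,Y^2_s)\bigr)\,ds+\int_t^r\bigl(g_1(s,Y^2_s-X_s,Z^2_s)-g_2(s,Y^2_s,Z^2_s)\bigr)\,d\langle B\rangle_s,
\]
so that Gronwall gives $|X_T|\le C\bigl(\int_t^T|\hat f_s|\,ds+\int_t^T|\hat g_s|\,d\langle B\rangle_s\bigr)$. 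It\^o is then applied to $(|\hat Y_s+X_s|^2+\varepsilon)^{p/2}e^{\lambda s}$; with $\lambda$ chosen large enough the drift and quadratic-variation terms have the good sign, the $K$-terms are handled by the sign decomposition above, and after taking $\hat{\mathbb{E}}_t$ the only inhomogeneity is $|\hat\xi+X_T|^p$ at the terminal time, giving exactly (\ref{e2-2}).
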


\begin{proof}
The method is the same as that in \cite{HJPS1}. For convenience of the reader,
we sketch the proof.

For each given $t<T$, consider the following SDE for $r\in \lbrack t,T]$%
\[
X_{r}=\int_{t}^{r}(f_{1}(s,Y_{s}^{2}-X_{s})-f_{2}(s,Y_{s}^{2}))ds+\int_{t}%
^{r}(g_{1}(s,Y_{s}^{2}-X_{s},Z_{s}^{2})-g_{2}(s,Y_{s}^{2},Z_{s}^{2}))d\langle
B\rangle_{s}.
\]
Noting that $\langle B\rangle_{t+s}-\langle B\rangle_{t}\leq \bar{\sigma}^{2}s$
for any $t$, $s\geq0$, we obtain%
\[
|X_{r}|\leq \int_{t}^{T}|\hat{f}_{s}|ds+\int_{t}^{T}|\hat{g}_{s}|d\langle
B\rangle_{s}+L(1+\bar{\sigma}^{2})\int_{t}^{r}|X_{s}|ds\text{ for }r\in \lbrack
t,T].
\]
By the Gronwall inequality, we have%
\begin{equation}
|X_{T}|\leq C\left(  \int_{t}^{T}|\hat{f}_{s}|ds+\int_{t}^{T}|\hat{g}%
_{s}|d\langle B\rangle_{s}\right)  , \label{e2-4}%
\end{equation}
where $C$ depends on $\bar{\sigma}$, $L$ and $T$. For each $\varepsilon>0$,
noting that%
\[
p(|x|^{2}+\varepsilon)^{(p/2)-1}+p(p-2)(|x|^{2}+\varepsilon)^{(p/2)-2}%
|x|^{2}\geq p((p-1)\wedge1)(|x|^{2}+\varepsilon)^{(p/2)-1}%
\]
for $x\in \mathbb{R}$ and taking $\lambda=pL(1+\bar{\sigma}^{2})+pL^{2}%
\bar{\sigma}^{2}2^{-1}[(p-1)^{-1}\vee1]$, we get by applying It\^{o}'s formula
to $(|\hat{Y}_{r}+X_{r}|^{2}+\varepsilon)^{p/2}e^{\lambda r}$ on $[t,T]$ that%
\begin{equation}
(|\hat{Y}_{t}|^{2}+\varepsilon)^{p/2}e^{\lambda t}+M_{T}-M_{t}\leq(|\hat{\xi
}+X_{T}|^{2}+\varepsilon)^{p/2}e^{\lambda T}, \label{e2-5}%
\end{equation}
where $M_{T}-M_{t}=\int_{t}^{T}p(|\hat{Y}_{s}+X_{s}|^{2}+\varepsilon
)^{(p/2)-1}e^{\lambda s}[(\hat{Y}_{s}+X_{s})\hat{Z}_{s}dB_{s}+(\hat{Y}%
_{s}+X_{s})^{+}dK_{s}^{1}+(\hat{Y}_{s}+X_{s})^{-}dK_{s}^{2}]$. By Lemma 3.4 in
\cite{HJPS1}, we know that $\hat{\mathbb{E}}_{t}[M_{T}-M_{t}]=0$. Taking
$\hat{\mathbb{E}}_{t}$ on both sides of (\ref{e2-5}) and letting
$\varepsilon \downarrow0$, we get (\ref{e2-2}) by (\ref{e2-4}).

Taking $\xi_{j}=f_{j}=g_{j}=0$ for $j\not =i$, we have $(Y^{j},Z^{j},K^{j}%
)=0$. Thus we obtain (\ref{e2-6}) by (\ref{e2-2}).

Applying It\^{o}'s formula to $|Y_{t}^{i}|^{2}$ on $[0,T]$, by the B-D-G
inequality, we get%
\begin{equation}
\hat{\mathbb{E}}\left[  \left(  \int_{0}^{T}|Z_{s}^{i}|^{2}d\langle
B\rangle_{s}\right)  ^{p/2}\right]  \leq C\left \{  \Lambda_{i}+\left(
\mathbb{\hat{E}}\left[  \sup_{t\leq T}|Y_{t}^{i}|^{p}\right]  \right)
^{1/2}\left(  \hat{\mathbb{E}}\left[  |K_{T}^{i}|^{p}\right]  \right)
^{1/2}\right \}  , \label{e2-9}%
\end{equation}
where $C$ depends on $p$, $\bar{\sigma}$, $L$ and $T$. It follows from
$G$-BSDE (\ref{e2-1}) and the B-D-G inequality that%
\begin{equation}
\hat{\mathbb{E}}\left[  |K_{T}^{i}|^{p}\right]  \leq C\left \{  \Lambda
_{i}+\hat{\mathbb{E}}\left[  \left(  \int_{0}^{T}|Z_{s}^{i}|^{2}d\langle
B\rangle_{s}\right)  ^{p/2}\right]  \right \}  , \label{e2-10}%
\end{equation}
where $C$ depends on $p$, $\bar{\sigma}$, $L$ and $T$. Then we deduce
(\ref{e2-7}) by (\ref{e2-9}) and (\ref{e2-10}).

Applying It\^{o}'s formula to $|\hat{Y}_{t}|^{2}$ on $[0,T]$, by the B-D-G
inequality, we get%
\begin{equation}
\hat{\mathbb{E}}\left[  \left(  \int_{0}^{T}|\hat{Z}_{s}|^{2}d\langle
B\rangle_{s}\right)  ^{p/2}\right]  \leq C\left \{  \mathbb{\hat{E}}\left[
\sup_{t\leq T}|\hat{Y}_{t}|^{p}\right]  +(\tilde{\Lambda}_{1}+\tilde{\Lambda
}_{2})^{1/2}\left(  \mathbb{\hat{E}}\left[  \sup_{t\leq T}|\hat{Y}_{t}%
|^{p}\right]  \right)  ^{1/2}\right \}  , \label{e2-11}%
\end{equation}
where $C$ depends on $p$, $\bar{\sigma}$, $L$ and $T$,
\[
\tilde{\Lambda}_{i}=\Lambda_{i}+\hat{\mathbb{E}}\left[  \left(  \int_{0}%
^{T}|Z_{s}^{i}|^{2}d\langle B\rangle_{s}\right)  ^{p/2}\right]  +\hat
{\mathbb{E}}\left[  |K_{T}^{i}|^{p}\right]  \text{ for }i=1,2.\text{ }%
\]
Thus we obtain (\ref{e2-8}) by (\ref{e2-7}) and (\ref{e2-11}).
\end{proof}

\subsection{Solution in the extended $\tilde{G}$-expectation space}

Following \cite{HJPS1}, the key point to obtain the solution of $G$-BSDE
(\ref{e2-1}) is to study the following type of $G$-BSDE:%
\begin{equation}
Y_{t}=\varphi(B_{T})+\int_{t}^{T}h(Y_{s},Z_{s})d\langle B\rangle_{s}-\int
_{t}^{T}Z_{s}dB_{s}-(K_{T}-K_{t}), \label{e2-12}%
\end{equation}
where $\varphi \in C_{0}^{\infty}(\mathbb{R})$, $h\in C_{0}^{\infty}%
(\mathbb{R}^{2})$.

In order to obtain the solution of $G$-BSDE (\ref{e2-12}), we introduce the
extended $\tilde{G}$-expectation space. Set $\tilde{\Omega}_{T}=C_{0}%
([0,T];\mathbb{R}^{2})$ and the canonical process is denoted by $(B,\tilde
{B})$. For each $a_{11}$, $a_{12}$, $a_{22}\in \mathbb{R}$, define%
\[
\tilde{G}\left(  \left(
\begin{array}
[c]{cc}%
a_{11} & a_{12}\\
a_{12} & a_{22}%
\end{array}
\right)  \right)  =G(a_{11})+\frac{1}{2}a_{22}=\frac{1}{2}\sup_{\gamma
\in \tilde{\Sigma}}\mathrm{tr}\left[  \left(
\begin{array}
[c]{cc}%
a_{11} & a_{12}\\
a_{12} & a_{22}%
\end{array}
\right)  \gamma \right]  ,
\]
where
\[
\tilde{\Sigma}=\left \{  \left(
\begin{array}
[c]{cc}%
\sigma^{2} & 0\\
0 & 1
\end{array}
\right)  :\sigma \in \lbrack0,\bar{\sigma}]\right \}  .
\]
The $\tilde{G}$-expectation is denoted by $\mathbb{\tilde{E}}$, and the
related spaces are denoted by%
\[
Lip(\tilde{\Omega}_{t})\text{, }L_{\tilde{G}}^{p}(\tilde{\Omega}_{t})\text{,
}\tilde{M}^{0}(0,T)\text{, }M_{\tilde{G}}^{p,\bar{p}}(0,T),\text{ }%
H_{\tilde{G}}^{p,\bar{p}}(0,T;\langle B\rangle)\text{, }S_{\tilde{G}}%
^{p}(0,T)\text{.}%
\]

For each $\mathbf{a}=(a_{1},a_{2})^{T}\in \mathbb{R}^{2}$, by Proposition 3.1.5
in Peng \cite{P2019}, we know that $B^{\mathbf{a}}:=a_{1}B+a_{2}\tilde{B}$ is
a $G_{\mathbf{a}}$-Brownian motion, where $G_{\mathbf{a}}(b)=\frac{1}{2}%
[(\bar{\sigma}^{2}|a_{1}|^{2}+|a_{2}|^{2})b^{+}-|a_{2}|^{2}b^{-}]$ for
$b\in \mathbb{R}$. In particular, $B$ is a $G$-Browinian motion and $\tilde{B}$
is a classical Browinian motion. Thus $\mathbb{\tilde{E}}|_{Lip(\Omega_{T}%
)}=\mathbb{\hat{E}}$, which implies that the completion of $M^{0}(0,T)$ (resp.
$S^{0}(0,T)$) under the norm $||\cdot||_{H_{\tilde{G}}^{p,\bar{p}}(0,T;\langle
B\rangle)}$ (resp. $||\cdot||_{S_{\tilde{G}}^{p}(0,T)}$) is $H_{G}^{p,\bar{p}%
}(0,T;\langle B\rangle)$ (resp. $S_{G}^{p}(0,T)$). Similar to (\ref{new-e2-3}%
), we know that $\langle B,\tilde{B}\rangle_{t}=0$ and $\langle \tilde
{B}\rangle_{t}=t$ in the $\tilde{G}$-expectation space.

\begin{lemma}
\label{pro2-2}Let $\varphi \in C_{0}^{\infty}(\mathbb{R})$ and $h\in
C_{0}^{\infty}(\mathbb{R}^{2})$. Then, for each given $p>1$, $G$-BSDE
(\ref{e2-12}) has a unique $L^{p}$-solution $(Y,Z,K)$ in the extended
$\tilde{G}$-expectation space such that $Y\in S_{G}^{p}(0,T)$, $Z\in
H_{\tilde{G}}^{2,p}(0,T;\langle B\rangle)$ and $K_{T}\in L_{\tilde{G}}%
^{p}(\tilde{\Omega}_{T})$.
\end{lemma}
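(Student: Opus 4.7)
The plan is to approximate the degenerate operator $G$ by non-degenerate ones built from the auxiliary Brownian motion $\tilde B$ available in the extended space, solve the regularized non-degenerate $G$-BSDEs via the theory of \cite{HJPS1}, and pass to the limit inside the $\tilde G$-expectation space. For each $\varepsilon\in(0,1]$ set $B^{\varepsilon}_{t}:=B_{t}+\sqrt{\varepsilon}\,\tilde B_{t}$. Because $\langle B,\tilde B\rangle\equiv 0$ and $\langle\tilde B\rangle_{t}=t$ in the $\tilde G$-space, Proposition~3.1.5 of \cite{P2019} (invoked earlier in the excerpt) identifies $B^{\varepsilon}$ as a $G_{\varepsilon}$-Brownian motion with
\[
G_{\varepsilon}(a)=\tfrac12\bigl[(\bar{\sigma}^{2}+\varepsilon)a^{+}-\varepsilon a^{-}\bigr],
\]
uniformly elliptic with lower bound $\varepsilon$. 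Moreover the $G_{\varepsilon}$-expectation agrees with $\mathbb{\tilde{E}}$ on functionals of $B^{\varepsilon}$, so the full non-degenerate theory embeds into the $\tilde G$-framework.

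Next I would solve the auxiliary $G_{\varepsilon}$-BSDE
\[
Y^{\varepsilon}_{t}=\varphi(B^{\varepsilon}_{T})+\int_{t}^{T}h(Y^{\varepsilon}_{s},Z^{\varepsilon}_{s})\,d\langle B^{\varepsilon}\rangle_{s}-\int_{t}^{T}Z^{\varepsilon}_{s}\,dB^{\varepsilon}_{s}-(K^{\varepsilon}_{T}-K^{\varepsilon}_{t})
\]
by \cite{HJPS1}. Since $\varphi,h\in C_{0}^{\infty}$, Krylov's regularity yields a $C^{1,2}$ solution $u^{\varepsilon}$ of
\[
\partial_{t}u^{\varepsilon}+G_{\varepsilon}\bigl(\partial_{xx}^{2}u^{\varepsilon}+2h(u^{\varepsilon},\partial_{x}u^{\varepsilon})\bigr)=0,\quad u^{\varepsilon}(T,x)=\varphi(x),
\]
with $\varepsilon$-uniform $L^{\infty}$-bounds on $u^{\varepsilon}$ and $\partial_{x}u^{\varepsilon}$ by the maximum principle. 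Setting $Y^{\varepsilon}_{t}=u^{\varepsilon}(t,B^{\varepsilon}_{t})$, $Z^{\varepsilon}_{t}=\partial_{x}u^{\varepsilon}(t,B^{\varepsilon}_{t})$ and defining $K^{\varepsilon}$ via It\^{o}'s formula produces a non-increasing $G_{\varepsilon}$-martingale $K^{\varepsilon}$, which is a $\tilde G$-martingale as well. Substituting $dB^{\varepsilon}=dB+\sqrt{\varepsilon}\,d\tilde B$ and $d\langle B^{\varepsilon}\rangle=d\langle B\rangle+\varepsilon\,ds$ recasts the equation as a 2D $\tilde G$-BSDE differing from the target (\ref{e2-12}) only by an $O(\sqrt{\varepsilon})$ stochastic integral against $\tilde B$ and an $O(\varepsilon)$ drift.

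I would then pass to the limit $\varepsilon\downarrow 0$. A variant of Proposition~\ref{pro2-1} adapted to the $\tilde G$-space (treating the $\tilde B$-integral directly by It\^{o}/B-D-G or by absorbing it into an extended martingale) gives $\varepsilon$-uniform bounds for $(Y^{\varepsilon},Z^{\varepsilon},K^{\varepsilon})$ in $S^{p}_{\tilde G}(0,T)\times H^{2,p}_{\tilde G}(0,T;\langle B\rangle)\times L^{p}_{\tilde G}(\tilde\Omega_{T})$. Applying the stability estimates (\ref{e2-2}) and (\ref{e2-8}) to parameters $(\varepsilon,\varepsilon')$, and using the $\varepsilon$-uniform $C^{1}$-bounds on $u^{\varepsilon}$ to handle both the perturbation $\varphi(B^{\varepsilon}_{T})-\varphi(B^{\varepsilon'}_{T})$ and the extra $O(\sqrt\varepsilon)$ terms, I would show $\{(Y^{\varepsilon},Z^{\varepsilon},K^{\varepsilon})\}$ is Cauchy. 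The limit $(Y,Z,K)$ solves (\ref{e2-12}) in the $\tilde G$-expectation space; $Y\in S^{p}_{G}(0,T)$ because $Y_{t}=\lim u^{\varepsilon}(t,B^{\varepsilon}_{t})=u(t,B_{t})$ is a functional of $B$ alone, where $u$ is the viscosity solution of the degenerate PDE for $G$. Uniqueness is immediate from (\ref{e2-2}).

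The chief technical obstacle I anticipate is adapting the a priori estimates of Proposition~\ref{pro2-1} to accommodate the auxiliary $\tilde B$-integral that appears after rewriting the equation in terms of $B$: the standard 1D framework of \cite{HJPS1} does not directly cover this, so either one extends those estimates to the 2D $\tilde G$-BSDE setting or one processes the $\tilde B$-integral through its classical-martingale properties under $\mathbb{\tilde{E}}$. A complementary subtle point is identifying the limit $Y$ as $G$-measurable rather than merely $\tilde G$-measurable, which hinges on stability of the viscosity solution under the vanishing-viscosity regularization $G_{\varepsilon}\downarrow G$. Note that the stronger conclusions $Z\in H_{G}^{2,p}(0,T;\langle B\rangle)$ and the $G$-martingale property of $K$ (beyond the $\tilde G$-versions here) are reserved for the later Lemma~\ref{pro2-6} and require the $\varepsilon$-uniform lower bound on $\partial_{xx}^{2}u^{\varepsilon}$ flagged in the introduction; they are not needed for the present lemma.
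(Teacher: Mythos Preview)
Your proposal is correct and follows essentially the same route as the paper: perturb $B$ by $\varepsilon\tilde B$ to obtain a non-degenerate $G_\varepsilon$-Brownian motion, solve the regularized BSDE via the PDE $u_\varepsilon$ and It\^o's formula, and pass to the limit using a direct It\^o computation on $|\hat Y^{\varepsilon,\varepsilon'}_s|^2 e^{\lambda s}$ (precisely the adaptation of Proposition~\ref{pro2-1} to the extra $\tilde B$-integral that you anticipate). The one spot where the paper is slicker is the proof of $Y\in S_G^p(0,T)$: instead of invoking viscosity stability $u_\varepsilon\to u$, it simply uses the uniform Lipschitz bound on $u_\varepsilon$ to get $|Y_t^\varepsilon-u_\varepsilon(t,B_t)|\le C\varepsilon|\tilde B_t|$, so $Y$ is the $S_{\tilde G}^p$-limit of the $B$-measurable processes $u_\varepsilon(\cdot,B_\cdot)\in S_G^p(0,T)$ and hence lies in $S_G^p(0,T)$ without ever identifying the limit $u$.
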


\begin{proof}
The uniqueness is due to (\ref{e2-2}) and (\ref{e2-8}) in Proposition
\ref{pro2-1}. The proof of existence is divided into two parts.

Part 1. The purpose of this part is to find a solution $(Y,Z,K)$ in the
extended $\tilde{G}$-expectation space such that $Y\in S_{\tilde{G}}^{p}(0,T)$
and $Z\in H_{\tilde{G}}^{2,p}(0,T;\langle B\rangle)$.

For each fixed $\varepsilon \in(0,\bar{\sigma})$, define%
\[
B_{t}^{\varepsilon}=B_{t}+\varepsilon \tilde{B}_{t}\text{ for }t\in
\lbrack0,T].
\]
Then $(B_{t}^{\varepsilon})_{t\in \lbrack0,T]}$ is the $G_{\varepsilon}%
$-Brownian motion under $\mathbb{\tilde{E}}$, where
\begin{equation}
G_{\varepsilon}(a)=\frac{1}{2}[(\bar{\sigma}^{2}+\varepsilon^{2}%
)a^{+}-\varepsilon^{2}a^{-}]\text{ for }a\in \mathbb{R}. \label{new-e2-10}%
\end{equation}
Let $u_{\varepsilon}$ be the viscosity solution of the following PDE%
\begin{equation}
\partial_{t}u+G_{\varepsilon}(\partial_{xx}^{2}u+2h(u,\partial_{x}u))=0\text{,
}u(T,x)=\varphi(x). \label{new-e2-11}%
\end{equation}
By Theorem 6.4.3 in Krylov \cite{Kr} (see also Theorem C.4.4 in Peng
\cite{P2019}), there exists a constant $\alpha \in(0,1)$ satisfying%
\begin{equation}
||u_{\varepsilon}||_{C^{1+\alpha/2,2+\alpha}([0,T-\delta]\times \mathbb{R}%
)}<\infty \text{ for any }\delta>0\text{.} \label{new-e2-12}%
\end{equation}

Applying It\^{o}'s formula to $u_{\varepsilon}(t,B_{t}^{\varepsilon})$ on
$[0,T-\delta]$, we obtain%
\begin{equation}
Y_{t}^{\varepsilon}=Y_{T-\delta}^{\varepsilon}+\int_{t}^{T-\delta}%
h(Y_{s}^{\varepsilon},Z_{s}^{\varepsilon})d\langle B^{\varepsilon}\rangle
_{s}-\int_{t}^{T-\delta}Z_{s}^{\varepsilon}dB_{s}^{\varepsilon}-(K_{T-\delta
}^{\varepsilon}-K_{t}^{\varepsilon}), \label{e2-13}%
\end{equation}
where $Y_{t}^{\varepsilon}=u_{\varepsilon}(t,B_{t}^{\varepsilon})$,
$Z_{t}^{\varepsilon}=\partial_{x}u_{\varepsilon}(t,B_{t}^{\varepsilon})$ and%
\[
K_{t}^{\varepsilon}=\int_{0}^{t}\frac{1}{2}\left[  \partial_{xx}%
^{2}u_{\varepsilon}(s,B_{s}^{\varepsilon})+2h(Y_{s}^{\varepsilon}%
,Z_{s}^{\varepsilon})\right]  d\langle B^{\varepsilon}\rangle_{s}-\int_{0}%
^{t}G_{\varepsilon}\left(  \partial_{xx}^{2}u_{\varepsilon}(s,B_{s}%
^{\varepsilon})+2h(Y_{s}^{\varepsilon},Z_{s}^{\varepsilon})\right)  ds.
\]
By Lemma 4.2.1 in Peng \cite{P2019}, we obtain that $K^{\varepsilon}$ is
non-increasing and $K_{t}^{\varepsilon}=\mathbb{\tilde{E}}_{t}[K_{T-\delta
}^{\varepsilon}]$ for $t\leq T-\delta$.

The same analysis as in the proof of inequality (4.3) in \cite{HJPS1}, we get
that there exists a positive constant $C$ depending on $\varphi$, $h$,
$\bar{\sigma}$ and $T$ such that%
\[
|u_{\varepsilon}(t_{1},x_{1})-u_{\varepsilon}(t_{2},x_{2})|\leq C(\sqrt
{|t_{1}-t_{2}|}+|x_{1}-x_{2}|)\text{ for }\varepsilon \in(0,\bar{\sigma
})\text{, }t_{1},t_{2}\leq T,\text{ }x_{1},x_{2}\in \mathbb{R}.
\]
From this we can easily deduce that $\mathbb{\tilde{E}}[|Y_{T-\delta
}^{\varepsilon}-\varphi(B_{T}^{\varepsilon})|^{2}]\rightarrow0$ as
$\delta \downarrow0$ and%
\begin{equation}
|u_{\varepsilon}(t,x)|\leq|\varphi(x)|+C\sqrt{T},\text{ }|\partial
_{x}u_{\varepsilon}(t,x)|\leq C\text{ for }\varepsilon \in(0,\bar{\sigma
})\text{, }t\leq T,\text{ }x\in \mathbb{R}. \label{e2-14}%
\end{equation}
Taking $\delta \downarrow0$ in (\ref{e2-13}), we obtain%
\begin{equation}
Y_{t}^{\varepsilon}=\varphi(B_{T}^{\varepsilon})+\int_{t}^{T}h(Y_{s}%
^{\varepsilon},Z_{s}^{\varepsilon})d\langle B^{\varepsilon}\rangle_{s}%
-\int_{t}^{T}Z_{s}^{\varepsilon}dB_{s}^{\varepsilon}-(K_{T}^{\varepsilon
}-K_{t}^{\varepsilon}), \label{e2-15}%
\end{equation}
where $Y^{\varepsilon}$ and $Z^{\varepsilon}$ are uniformly bounded for
$\varepsilon \in(0,\bar{\sigma})$ by (\ref{e2-14}).

For each given $\varepsilon$, $\varepsilon^{\prime}\in(0,\bar{\sigma})$, set%
\[
\hat{Y}_{t}^{\varepsilon,\varepsilon^{\prime}}=Y_{t}^{\varepsilon}%
-Y_{t}^{\varepsilon^{\prime}},\text{ }\hat{Z}_{t}^{\varepsilon,\varepsilon
^{\prime}}=Z_{t}^{\varepsilon}-Z_{t}^{\varepsilon^{\prime}},\text{ }\hat
{K}_{t}^{\varepsilon,\varepsilon^{\prime}}=K_{t}^{\varepsilon}-K_{t}%
^{\varepsilon^{\prime}},\text{ }\hat{\xi}^{\varepsilon,\varepsilon^{\prime}%
}=\varphi(B_{T}^{\varepsilon})-\varphi(B_{T}^{\varepsilon^{\prime}}),
\]%
\[
\hat{h}_{t}^{\varepsilon,\varepsilon^{\prime}}=h(Y_{t}^{\varepsilon}%
,Z_{t}^{\varepsilon})-h(Y_{t}^{\varepsilon^{\prime}},Z_{t}^{\varepsilon
^{\prime}})\text{, }\bar{h}_{t}^{\varepsilon,\varepsilon^{\prime}}%
=\varepsilon^{2}h(Y_{t}^{\varepsilon},Z_{t}^{\varepsilon})-(\varepsilon
^{\prime})^{2}h(Y_{t}^{\varepsilon^{\prime}},Z_{t}^{\varepsilon^{\prime}%
}),\text{ }\bar{Z}_{t}^{\varepsilon,\varepsilon^{\prime}}=\varepsilon
Z_{t}^{\varepsilon}-\varepsilon^{\prime}Z_{t}^{\varepsilon^{\prime}}.
\]
Then, by (\ref{e2-15}) and $\langle B^{\varepsilon}\rangle_{s}=\langle
B\rangle_{s}+\varepsilon^{2}s$, we have%
\[
\hat{Y}_{t}^{\varepsilon,\varepsilon^{\prime}}=\hat{\xi}^{\varepsilon
,\varepsilon^{\prime}}+\int_{t}^{T}\hat{h}_{s}^{\varepsilon,\varepsilon
^{\prime}}d\langle B\rangle_{s}+\int_{t}^{T}\bar{h}_{s}^{\varepsilon
,\varepsilon^{\prime}}ds-\int_{t}^{T}\hat{Z}_{s}^{\varepsilon,\varepsilon
^{\prime}}dB_{s}-\int_{t}^{T}\bar{Z}_{s}^{\varepsilon,\varepsilon^{\prime}%
}d\tilde{B}_{s}-(\hat{K}_{T}^{\varepsilon,\varepsilon^{\prime}}-\hat{K}%
_{t}^{\varepsilon,\varepsilon^{\prime}}).
\]
Applying It\^{o}'s formula to $|\hat{Y}_{s}^{\varepsilon,\varepsilon^{\prime}%
}|^{2}e^{\lambda s}$ on $[t,T]$ for some positive constant $\lambda$, we
obtain%
\begin{equation}%
\begin{array}
[c]{l}%
|\hat{Y}_{t}^{\varepsilon,\varepsilon^{\prime}}|^{2}e^{\lambda t}+\lambda
\int_{t}^{T}e^{\lambda s}|\hat{Y}_{s}^{\varepsilon,\varepsilon^{\prime}}%
|^{2}ds+\int_{t}^{T}e^{\lambda s}|\hat{Z}_{s}^{\varepsilon,\varepsilon
^{\prime}}|^{2}d\langle B\rangle_{s}+M_{T}-M_{t}\\
\leq|\hat{\xi}^{\varepsilon,\varepsilon^{\prime}}|^{2}e^{\lambda T}+2\int
_{t}^{T}e^{\lambda s}|\hat{Y}_{s}^{\varepsilon,\varepsilon^{\prime}}||\hat
{h}_{s}^{\varepsilon,\varepsilon^{\prime}}|d\langle B\rangle_{s}+2\int_{t}%
^{T}e^{\lambda s}|\hat{Y}_{s}^{\varepsilon,\varepsilon^{\prime}}||\bar{h}%
_{s}^{\varepsilon,\varepsilon^{\prime}}|ds,
\end{array}
\label{e2-16}%
\end{equation}
where%
\[
M_{T}-M_{t}=2\int_{t}^{T}e^{\lambda s}\hat{Y}_{s}^{\varepsilon,\varepsilon
^{\prime}}[\hat{Z}_{s}^{\varepsilon,\varepsilon^{\prime}}dB_{s}+\bar{Z}%
_{s}^{\varepsilon,\varepsilon^{\prime}}d\tilde{B}_{s}]+2\int_{t}^{T}e^{\lambda
s}[(\hat{Y}_{s}^{\varepsilon,\varepsilon^{\prime}})^{+}dK_{s}^{\varepsilon
}+(\hat{Y}_{s}^{\varepsilon,\varepsilon^{\prime}})^{-}dK_{s}^{\varepsilon
^{\prime}}].
\]
Since%
\[
2|\hat{Y}_{s}^{\varepsilon,\varepsilon^{\prime}}||\hat{h}_{s}^{\varepsilon
,\varepsilon^{\prime}}|\leq2L_{1}|\hat{Y}_{s}^{\varepsilon,\varepsilon
^{\prime}}|(|\hat{Y}_{s}^{\varepsilon,\varepsilon^{\prime}}|+|\hat{Z}%
_{s}^{\varepsilon,\varepsilon^{\prime}}|)\leq(|L_{1}|^{2}+2L_{1})|\hat{Y}%
_{s}^{\varepsilon,\varepsilon^{\prime}}|^{2}+|\hat{Z}_{s}^{\varepsilon
,\varepsilon^{\prime}}|^{2},
\]%
\[
2|\hat{Y}_{s}^{\varepsilon,\varepsilon^{\prime}}||\bar{h}_{s}^{\varepsilon
,\varepsilon^{\prime}}|\leq|\hat{Y}_{s}^{\varepsilon,\varepsilon^{\prime}%
}|^{2}+|\bar{h}_{s}^{\varepsilon,\varepsilon^{\prime}}|^{2}\leq|\hat{Y}%
_{s}^{\varepsilon,\varepsilon^{\prime}}|^{2}+2|L_{2}|^{2}(\varepsilon
^{4}+(\varepsilon^{\prime})^{4}),
\]
where $L_{1}=\sup_{(y,z)\in \mathbb{R}^{2}}(|\partial_{y}h(y,z)|+|\partial
_{z}h(y,z)|)$ and $L_{2}=\sup_{(y,z)\in \mathbb{R}^{2}}|h(y,z)|$, we get by
taking $\lambda=(|L_{1}|^{2}+2L_{1})\bar{\sigma}^{2}+1$ in (\ref{e2-16}) that%
\begin{equation}
|\hat{Y}_{t}^{\varepsilon,\varepsilon^{\prime}}|^{2}e^{\lambda t}+M_{T}%
-M_{t}\leq|\hat{\xi}^{\varepsilon,\varepsilon^{\prime}}|^{2}e^{\lambda
T}+2|L_{2}|^{2}(\varepsilon^{4}+(\varepsilon^{\prime})^{4})Te^{\lambda T}.
\label{e2-17}%
\end{equation}
By Lemma 3.4 in \cite{HJPS1}, we know that $\mathbb{\tilde{E}}_{t}[M_{T}%
-M_{t}]=0$. Taking $\mathbb{\tilde{E}}_{t}$ on both sides of (\ref{e2-17}), we
obtain%
\begin{align*}
|\hat{Y}_{t}^{\varepsilon,\varepsilon^{\prime}}|^{2}  &  \leq C\left(
\mathbb{\tilde{E}}_{t}[|\hat{\xi}^{\varepsilon,\varepsilon^{\prime}}%
|^{2}]+\varepsilon^{4}+(\varepsilon^{\prime})^{4}\right) \\
&  \leq C\left(  L_{\varphi}^{2}|\varepsilon-\varepsilon^{\prime}%
|^{2}\mathbb{\tilde{E}}_{t}[|\tilde{B}_{T}|^{2}]+\varepsilon^{4}%
+(\varepsilon^{\prime})^{4}\right)  ,
\end{align*}
where $L_{\varphi}=\sup_{x\in \mathbb{R}}|\varphi^{\prime}(x)|$ and $C$ depends
on $\bar{\sigma}$, $h$ and $T$. Thus, for each given $p>1$, we obtain%
\begin{equation}
\mathbb{\tilde{E}}\left[  \sup_{t\leq T}|\hat{Y}_{t}^{\varepsilon
,\varepsilon^{\prime}}|^{p}\right]  \leq C\left(  |\varepsilon-\varepsilon
^{\prime}|^{p}+\varepsilon^{2p}+(\varepsilon^{\prime})^{2p}\right)
\rightarrow0\text{ as }\varepsilon,\text{ }\varepsilon^{\prime}\rightarrow0,
\label{e2-18}%
\end{equation}
where $C$ depends on $p$, $\bar{\sigma}$, $\varphi$, $h$ and $T$. Applying
It\^{o}'s formula to $|\hat{Y}_{t}^{\varepsilon,\varepsilon^{\prime}}|^{2}$ on
$[0,T]$, we get%
\begin{equation}%
\begin{array}
[c]{rl}%
\int_{0}^{T}|\hat{Z}_{t}^{\varepsilon,\varepsilon^{\prime}}|^{2}d\langle
B\rangle_{t}\leq & |\hat{\xi}^{\varepsilon,\varepsilon^{\prime}}|^{2}%
+2\int_{0}^{T}|\hat{Y}_{t}^{\varepsilon,\varepsilon^{\prime}}||\hat{h}%
_{t}^{\varepsilon,\varepsilon^{\prime}}|d\langle B\rangle_{t}-2\int_{0}%
^{T}\hat{Y}_{t}^{\varepsilon,\varepsilon^{\prime}}\hat{Z}_{t}^{\varepsilon
,\varepsilon^{\prime}}dB_{t}-2\int_{0}^{T}\hat{Y}_{t}^{\varepsilon
,\varepsilon^{\prime}}\bar{Z}_{t}^{\varepsilon,\varepsilon^{\prime}}d\tilde
{B}_{t}\\
& +2\int_{0}^{T}|\hat{Y}_{t}^{\varepsilon,\varepsilon^{\prime}}||\bar{h}%
_{t}^{\varepsilon,\varepsilon^{\prime}}|dt+2(|K_{T}^{\varepsilon}%
|+|K_{T}^{\varepsilon^{\prime}}|)\sup_{t\leq T}|\hat{Y}_{t}^{\varepsilon
,\varepsilon^{\prime}}|.
\end{array}
\label{e2-19}%
\end{equation}
By (\ref{e2-14}), (\ref{e2-15}), (\ref{e2-18}) and (\ref{e2-19}), we obtain%
\begin{equation}
\mathbb{\tilde{E}}\left[  \int_{0}^{T}|\hat{Z}_{t}^{\varepsilon,\varepsilon
^{\prime}}|^{2}d\langle B\rangle_{t}\right]  \leq C\left \{  \mathbb{\tilde{E}%
}\left[  \sup_{t\leq T}|\hat{Y}_{t}^{\varepsilon,\varepsilon^{\prime}}%
|^{2}\right]  +\left(  \mathbb{\tilde{E}}\left[  \sup_{t\leq T}|\hat{Y}%
_{t}^{\varepsilon,\varepsilon^{\prime}}|^{2}\right]  \right)  ^{1/2}\right \}
\rightarrow0\text{ as }\varepsilon,\text{ }\varepsilon^{\prime}\rightarrow0,
\label{e2-20}%
\end{equation}
where $C$ depends on $\bar{\sigma}$, $\varphi$, $h$ and $T$. Since
$Z^{\varepsilon}$ is uniformly bounded for $\varepsilon \in(0,\bar{\sigma})$,
we deduce from (\ref{e2-20}) that, for each given $p>1$,%
\begin{equation}
\mathbb{\tilde{E}}\left[  \left(  \int_{0}^{T}|\hat{Z}_{t}^{\varepsilon
,\varepsilon^{\prime}}|^{2}d\langle B\rangle_{t}\right)  ^{p/2}\right]
\rightarrow0\text{ as }\varepsilon,\text{ }\varepsilon^{\prime}\rightarrow0.
\label{e2-21}%
\end{equation}
Thus, for each given $p>1$, there exist $Y\in S_{\tilde{G}}^{p}(0,T)$ and
$Z\in H_{\tilde{G}}^{2,p}(0,T;\langle B\rangle)$ such that%
\begin{equation}
\mathbb{\tilde{E}}\left[  \sup_{t\leq T}|Y_{t}^{\varepsilon}-Y_{t}%
|^{p}+\left(  \int_{0}^{T}|Z_{t}^{\varepsilon}-Z_{t}|^{2}d\langle B\rangle
_{t}\right)  ^{p/2}\right]  \rightarrow0\text{ as }\varepsilon \rightarrow0.
\label{e2-22}%
\end{equation}
It follows from (\ref{e2-15}) and (\ref{e2-22}) that there exists a $K_{T}\in
L_{\tilde{G}}^{p}(\tilde{\Omega}_{T})$ such that $\mathbb{\tilde{E}}\left[
|K_{T}^{\varepsilon}-K_{T}|^{p}\right]  \rightarrow0$ as $\varepsilon
\rightarrow0$. Taking $\varepsilon \rightarrow0$ in (\ref{e2-15}), we obtain%
\begin{equation}
Y_{t}=\varphi(B_{T})+\int_{t}^{T}h(Y_{s},Z_{s})d\langle B\rangle_{s}-\int
_{t}^{T}Z_{s}dB_{s}-(K_{T}-K_{t}), \label{e2-23}%
\end{equation}
where $K$ is non-increasing and $K_{t}=\mathbb{\tilde{E}}_{t}[K_{T}]$ for
$t\leq T$.

Part 2. The purpose of this part is to prove that $Y\in S_{G}^{p}(0,T)$ for
each $p>1$.

Noting that $Y_{t}^{\varepsilon}=u_{\varepsilon}(t,B_{t}^{\varepsilon})$ and
(\ref{e2-14}), we have%
\[
\mathbb{\tilde{E}}\left[  \sup_{t\leq T}|Y_{t}^{\varepsilon}-u_{\varepsilon
}(t,B_{t})|^{p}\right]  \leq C\varepsilon^{p}\mathbb{\tilde{E}}\left[
\sup_{t\leq T}|\tilde{B}_{t}|^{p}\right]  \rightarrow0\text{ as }%
\varepsilon \rightarrow0,
\]
which implies%
\begin{equation}
\mathbb{\tilde{E}}\left[  \sup_{t\leq T}|u_{\varepsilon}(t,B_{t})-Y_{t}%
|^{p}\right]  \rightarrow0\text{ as }\varepsilon \rightarrow0. \label{e2-24}%
\end{equation}
Thus $Y\in S_{G}^{p}(0,T)$.
\end{proof}

\subsection{Estimates of partial derivatives of $u_{\varepsilon}$}

In order to show that $Z$ obtained in Lemma \ref{pro2-2} belongs to
$H_{G}^{2,p}(0,T;\langle B\rangle)$, we need to prove that $\partial_{xx}%
^{2}u_{\varepsilon}$ is uniformly bounded from below for $\varepsilon
\in(0,\bar{\sigma})$, where $u_{\varepsilon}$ is the solution of PDE
(\ref{new-e2-11}).

For each fixed $\varepsilon \in(0,\bar{\sigma})$, $G_{\varepsilon}$ is defined
in (\ref{new-e2-10}). Let $\hat{\mathbb{E}}^{\varepsilon}$ be the
$G_{\varepsilon}$-expectation on $(\Omega_{T},Lip(\Omega_{T}))$. The canonical
process $(B_{t})_{t\in \lbrack0,T]}$ is the $1$-dimensional $G_{\varepsilon}%
$-Brownian motion under $\hat{\mathbb{E}}^{\varepsilon}$. For each given
$(t,x)\in \lbrack0,T)\times \mathbb{R}$, denote
\[
B_{s}^{t,x}=x+B_{s}-B_{t}\text{ for }s\in \lbrack t,T].
\]
Similar to (\ref{e2-15}), applying It\^{o}'s formula to $u_{\varepsilon
}(s,B_{s}^{t,x})$ under $\hat{\mathbb{E}}^{\varepsilon}$, we obtain that the
following $G_{\varepsilon}$-BSDE
\begin{equation}
Y_{s}^{t,x}=\varphi(B_{T}^{t,x})+\int_{s}^{T}h(Y_{r}^{t,x},Z_{r}%
^{t,x})d\langle B\rangle_{r}-\int_{s}^{T}Z_{r}^{t,x}dB_{r}-(K_{T}^{t,x}%
-K_{s}^{t,x}) \label{e2-26}%
\end{equation}
has a unique solution $(Y_{s}^{t,x},Z_{s}^{t,x},K_{s}^{t,x})_{s\in \lbrack
t,T]}$ satisfying $Y_{s}^{t,x}=u_{\varepsilon}(s,B_{s}^{t,x})$, $Z_{s}%
^{t,x}=\partial_{x}u_{\varepsilon}(t,B_{s}^{t,x})$ and $K_{t}^{t,x}=0$.

Let $\mathcal{P}^{\varepsilon}$ be a weakly compact and convex set of
probability measures on $(\Omega_{T},\mathcal{B}(\Omega_{T}))$ such that%
\[
\hat{\mathbb{E}}^{\varepsilon}[X]=\sup_{P\in \mathcal{P}^{\varepsilon}}%
E_{P}[X]\text{ for all }X\in L_{G_{\varepsilon}}^{1}(\Omega_{T}).
\]
For each given $(t,x)\in \lbrack0,T)\times \mathbb{R}$, denote%
\[
\mathcal{P}_{t,x}^{\varepsilon}=\{P\in \mathcal{P}^{\varepsilon}:E_{P}%
[K_{T}^{t,x}]=0\}.
\]

The following estimates for $G_{\varepsilon}$-BSDE (\ref{e2-26}) are useful.

\begin{proposition}
\label{pro2-3} Suppose that $\varphi \in C_{0}^{\infty}(\mathbb{R})$ and $h\in
C_{0}^{\infty}(\mathbb{R}^{2})$. For each $(t,x,\Delta)\in \lbrack
0,T)\times \mathbb{R}\times \mathbb{R}$, let $(Y_{s}^{t,x},Z_{s}^{t,x}%
,K_{s}^{t,x})_{s\in \lbrack t,T]}$ and $(Y_{s}^{t,x+\Delta},Z_{s}^{t,x+\Delta
},K_{s}^{t,x+\Delta})_{s\in \lbrack t,T]}$ be two solutions of $G_{\varepsilon
}$-BSDE (\ref{e2-26}). Then, for each given $p>1$,
\begin{equation}
\sup_{s\in \lbrack t,T]}\left \vert Y_{s}^{t,x+\Delta}-Y_{s}^{t,x}\right \vert
^{p}\leq C|\Delta|^{p}, \label{e2-27}%
\end{equation}%
\begin{equation}
\hat{\mathbb{E}}^{\varepsilon}\left[  \sup_{s\in \lbrack t,T]}\left \vert
Y_{s}^{t,x}\right \vert ^{p}+\left(  \int_{t}^{T}\left \vert Z_{s}%
^{t,x}\right \vert ^{2}d\langle B\rangle_{s}\right)  ^{p/2}+\left \vert
K_{T}^{t,x}\right \vert ^{p}\right]  \leq C(1+|x|^{p}), \label{e2-28}%
\end{equation}%
\begin{equation}
E_{P}\left[  \left(  \int_{t}^{T}\left \vert Z_{s}^{t,x+\Delta}-Z_{s}%
^{t,x}\right \vert ^{2}d\langle B\rangle_{s}\right)  ^{p/2}+\left \vert
K_{T}^{t,x+\Delta}\right \vert ^{p}\right]  \leq C|\Delta|^{p}\text{ for }%
P\in \mathcal{P}_{t,x}^{\varepsilon}, \label{e2-29}%
\end{equation}%
\begin{equation}
E_{P^{\Delta}}\left[  \left(  \int_{t}^{T}\left \vert Z_{s}^{t,x+\Delta}%
-Z_{s}^{t,x}\right \vert ^{2}d\langle B\rangle_{s}\right)  ^{p/2}+\left \vert
K_{T}^{t,x}\right \vert ^{p}\right]  \leq C|\Delta|^{p}\text{ for }P^{\Delta
}\in \mathcal{P}_{t,x+\Delta}^{\varepsilon}, \label{e2-30}%
\end{equation}
where the constant $C>0$ depends on $p$, $\bar{\sigma}$, $\varphi$, $h$ and
$T$.
\end{proposition}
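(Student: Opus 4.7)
The key observation is that every constant in (\ref{e2-27})--(\ref{e2-30}) must be independent of $\varepsilon \in (0,\bar\sigma)$, since this uniformity is precisely what will feed into Lemma \ref{pro2-6}. The foundation is the uniform Lipschitz bound $|u_\varepsilon(t_1,x_1)-u_\varepsilon(t_2,x_2)|\leq C(\sqrt{|t_1-t_2|}+|x_1-x_2|)$ and the $L^\infty$-bounds on $u_\varepsilon$ and $\partial_x u_\varepsilon$ from (\ref{e2-14}), both established during the proof of Lemma \ref{pro2-2}, together with the Markov-type representation $Y_s^{t,x}=u_\varepsilon(s,B_s^{t,x})$, $Z_s^{t,x}=\partial_x u_\varepsilon(s,B_s^{t,x})$.

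With these in hand, (\ref{e2-27}) is immediate from $|u_\varepsilon(s,B_s^{t,x+\Delta})-u_\varepsilon(s,B_s^{t,x})|\leq C|\Delta|$ pointwise. For (\ref{e2-28}), the bound $|Y_s^{t,x}|\leq\|\varphi\|_\infty+C\sqrt{T}$ handles the $Y$-term, while the $Z$- and $K$-estimates follow from the $G_\varepsilon$-analog of the prior estimate (\ref{e2-7}) of Proposition \ref{pro2-1}; since $h$ is bounded and $\bar\sigma^2+\varepsilon^2\leq 2\bar\sigma^2$, the constant stays uniform in $\varepsilon$.

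The real work lies in (\ref{e2-29}). I would first show that under any $P \in \mathcal{P}_{t,x}^\varepsilon$, the process $K^{t,x}$ vanishes identically on $[t,T]$ $P$-a.s.: $E_P[K_T^{t,x}]=0$ combined with $K_T^{t,x}\leq 0$ and $K_t^{t,x}=0$ forces $K_T^{t,x}=0$, and the $P$-supermartingale property (together with monotonicity) then gives $K_s^{t,x}\equiv 0$. Thus under $P$ the BSDE for $Y^{t,x}$ becomes classical. Setting $\hat Y=Y^{t,x+\Delta}-Y^{t,x}$ and $\hat Z=Z^{t,x+\Delta}-Z^{t,x}$, I would apply It\^o's formula to an appropriate power of $|\hat Y_s|$ times $e^{\lambda s}$, with $\lambda$ chosen to absorb the Lipschitz constants of $h$, and take $E_P$. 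The only nontrivial term is the one involving $dK^{t,x+\Delta}$: the pointwise bound $|\hat Y|\leq C|\Delta|$ from (\ref{e2-27}) and the monotonicity of $K^{t,x+\Delta}$ reduce it to controlling $E_P[|K_T^{t,x+\Delta}|]$. That auxiliary estimate I would obtain by taking $E_P$ of both BSDEs and subtracting: the deterministic $Y_t$-terms yield $u_\varepsilon(t,x+\Delta)-u_\varepsilon(t,x)=O(|\Delta|)$, the $\varphi$-contribution is $O(|\Delta|)$, the $\int\hat Z\,dB$ term has mean zero because $Z=\partial_x u_\varepsilon$ is uniformly bounded so the integral is a true $P$-martingale, and the $d\langle B\rangle$-integral of $\hat h$ is handled by Lipschitz and Cauchy--Schwarz, contributing at most $C|\Delta|+C(E_P[\int_t^T|\hat Z|^2 d\langle B\rangle])^{1/2}$. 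A Young-inequality absorption then yields $E_P[\int|\hat Z|^2 d\langle B\rangle]\leq C|\Delta|^2$ and $E_P[|K_T^{t,x+\Delta}|]\leq C|\Delta|$, which upgrade to the $L^p$-statement through a B-D-G step mimicking the proof of Proposition \ref{pro2-1}.

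The estimate (\ref{e2-30}) is proved by the symmetric argument with the roles of $x$ and $x+\Delta$ interchanged: under $P^\Delta\in\mathcal{P}_{t,x+\Delta}^\varepsilon$ it is $K^{t,x+\Delta}$ that vanishes and one controls $E_{P^\Delta}[|K_T^{t,x}|]$. The main obstacle throughout is avoiding hidden $\varepsilon$-dependence in the constants; this reduces to systematically replacing each occurrence of $\langle B\rangle_T-\langle B\rangle_t\leq (\bar\sigma^2+\varepsilon^2)T$ by $2\bar\sigma^2 T$, together with carefully justifying that the $\int\hat Z\,dB$ integrals are genuine (not merely local) $P$- or $P^\Delta$-martingales, which is guaranteed by the uniform $L^\infty$-bound on $\partial_x u_\varepsilon$.
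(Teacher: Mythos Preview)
Your proposal is correct and follows essentially the same route as the paper's proof. The paper obtains (\ref{e2-27}) and (\ref{e2-28}) by invoking the a~priori BSDE estimates (\ref{e2-2}), (\ref{e2-6}), (\ref{e2-7}) of Proposition~\ref{pro2-1} (applied in the $G_\varepsilon$-framework) rather than by citing the Lipschitz bound on $u_\varepsilon$ directly as you do, but since $B_T^{t,x+\Delta}-B_T^{t,x}=\Delta$ these two arguments are equivalent. For (\ref{e2-29}) the paper proceeds exactly as you outline: under $P\in\mathcal{P}_{t,x}^\varepsilon$ one has $K^{t,x}\equiv 0$ $P$-a.s., It\^o's formula applied to $|\hat Y_s^\Delta|^2$ yields $E_P\bigl[(\int_t^T|\hat Z_r^\Delta|^2 d\langle B\rangle_r)^{p/2}\bigr]\leq C\,E_P\bigl[\sup_r|\hat Y_r^\Delta|^p+\sup_r|\hat Y_r^\Delta|^{p/2}|K_T^{t,x+\Delta}|^{p/2}\bigr]$, the difference of the two BSDEs gives $E_P[|K_T^{t,x+\Delta}|^p]\leq C\,E_P\bigl[\sup_r|\hat Y_r^\Delta|^p+(\int_t^T|\hat Z_r^\Delta|^2 d\langle B\rangle_r)^{p/2}\bigr]$, and a Young-inequality absorption closes the loop; (\ref{e2-30}) follows symmetrically. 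The only cosmetic difference is that the paper works directly at the $L^p$ level via B--D--G on the pathwise It\^o identity, whereas you first sketch the $L^2/L^1$ case and then indicate the $L^p$ upgrade---the underlying computation is the same.
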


\begin{proof}
Similar to the proof of (\ref{e2-2}), (\ref{e2-6}) and (\ref{e2-7}), we obtain%
\[
\sup_{s\in \lbrack t,T]}\left \vert Y_{s}^{t,x+\Delta}-Y_{s}^{t,x}\right \vert
^{p}\leq C\sup_{s\in \lbrack t,T]}\hat{\mathbb{E}}_{s}^{\varepsilon}\left[
\left \vert \varphi(B_{T}^{t,x+\Delta})-\varphi(B_{T}^{t,x})\right \vert
^{p}\right]  \leq C|\Delta|^{p}%
\]
and%
\begin{align*}
&  \hat{\mathbb{E}}^{\varepsilon}\left[  \sup_{s\in \lbrack t,T]}\left \vert
Y_{s}^{t,x}\right \vert ^{p}+\left(  \int_{t}^{T}\left \vert Z_{s}%
^{t,x}\right \vert ^{2}d\langle B\rangle_{s}\right)  ^{p/2}+\left \vert
K_{T}^{t,x}\right \vert ^{p}\right] \\
&  \leq C\left(  1+\hat{\mathbb{E}}^{\varepsilon}\left[  \sup_{s\in \lbrack
t,T]}\hat{\mathbb{E}}_{s}^{\varepsilon}\left[  \left \vert \varphi(B_{T}%
^{t,x})\right \vert ^{p}\right]  \right]  \right) \\
&  \leq C\left(  1+|x|^{p}+\hat{\mathbb{E}}^{\varepsilon}\left[  \sup
_{s\in \lbrack t,T]}\hat{\mathbb{E}}_{s}^{\varepsilon}\left[  \left \vert
B_{T}-B_{t}\right \vert ^{p}\right]  \right]  \right) \\
&  \leq C(1+|x|^{p}),
\end{align*}
where the constant $C>0$ depends on $p$, $\bar{\sigma}$, $\varphi$, $h$ and
$T$.

Set $\hat{Y}_{s}^{\Delta}=Y_{s}^{t,x+\Delta}-Y_{s}^{t,x}$ and $\hat{Z}%
_{s}^{\Delta}=Z_{s}^{t,x+\Delta}-Z_{s}^{t,x}$ for $s\in \lbrack t,T]$. For each
given $P\in \mathcal{P}_{t,x}^{\varepsilon}$, we know that $K^{t,x}=0$ $P$-a.s.
by $E_{P}[K_{T}^{t,x}]=0$. Applying It\^{o}'s formula to $|\hat{Y}_{s}%
^{\Delta}|^{2}$ on $[t,T]$ under $P$, we obtain%
\begin{equation}
|\hat{Y}_{t}^{\Delta}|^{2}+\int_{t}^{T}|\hat{Z}_{r}^{\Delta}|^{2}d\langle
B\rangle_{r}=|\hat{Y}_{T}^{\Delta}|^{2}+2\int_{t}^{T}\hat{Y}_{r}^{\Delta}%
\hat{h}_{r}d\langle B\rangle_{r}-2\int_{t}^{T}\hat{Y}_{r}^{\Delta}\hat{Z}%
_{r}^{\Delta}dB_{r}-2\int_{t}^{T}\hat{Y}_{r}^{\Delta}dK_{r}^{t,x+\Delta},
\label{e2-31}%
\end{equation}
where%
\begin{equation}
|\hat{h}_{r}|=|h(Y_{r}^{t,x+\Delta},Z_{r}^{t,x+\Delta})-h(Y_{r}^{t,x}%
,Z_{r}^{t,x})|\leq \sup_{(y,z)\in \mathbb{R}^{2}}(|h_{y}^{\prime}(y,z)|+|h_{z}%
^{\prime}(y,z)|)(|\hat{Y}_{r}^{\Delta}|+|\hat{Z}_{r}^{\Delta}|). \label{e2-32}%
\end{equation}
Since $K^{t,x+\Delta}$ is non-increasing with $K_{t}^{t,x+\Delta}=0$ and
$d\langle B\rangle_{r}\leq(\bar{\sigma}^{2}+\varepsilon^{2})dr\leq2\bar
{\sigma}^{2}dr$ under $P$, we deduce by (\ref{e2-31}) and (\ref{e2-32}) that%
\begin{equation}
E_{P}\left[  \left(  \int_{t}^{T}|\hat{Z}_{r}^{\Delta}|^{2}d\langle
B\rangle_{r}\right)  ^{p/2}\right]  \leq CE_{P}\left[  \sup_{r\in \lbrack
t,T]}\left \vert \hat{Y}_{r}^{\Delta}\right \vert ^{p}+\left(  \sup_{r\in \lbrack
t,T]}\left \vert \hat{Y}_{r}^{\Delta}\right \vert ^{p/2}\right)  \left \vert
K_{T}^{t,x+\Delta}\right \vert ^{p/2}\right]  , \label{e2-33}%
\end{equation}
where $C>0$ depends on $p$, $\bar{\sigma}$, $h$ and $T$. Noting that%
\[
K_{T}^{t,x+\Delta}=\hat{Y}_{T}^{\Delta}-\hat{Y}_{t}^{\Delta}+\int_{t}^{T}%
\hat{h}_{r}d\langle B\rangle_{r}-\int_{t}^{T}\hat{Z}_{r}^{\Delta}dB_{r},\text{
}P\text{-a.s.,}%
\]
we get%
\begin{equation}
E_{P}\left[  \left \vert K_{T}^{t,x+\Delta}\right \vert ^{p}\right]  \leq
CE_{P}\left[  \sup_{r\in \lbrack t,T]}\left \vert \hat{Y}_{r}^{\Delta
}\right \vert ^{p}+\left(  \int_{t}^{T}|\hat{Z}_{r}^{\Delta}|^{2}d\langle
B\rangle_{r}\right)  ^{p/2}\right]  , \label{e2-34}%
\end{equation}
where $C>0$ depends on $p$, $\bar{\sigma}$, $h$ and $T$. Thus we obtain by
(\ref{e2-33}) and (\ref{e2-34}) that%
\begin{equation}
E_{P}\left[  \left(  \int_{t}^{T}|\hat{Z}_{r}^{\Delta}|^{2}d\langle
B\rangle_{r}\right)  ^{p/2}+\left \vert K_{T}^{t,x+\Delta}\right \vert
^{p}\right]  \leq CE_{P}\left[  \sup_{r\in \lbrack t,T]}\left \vert \hat{Y}%
_{r}^{\Delta}\right \vert ^{p}\right]  , \label{e2-35}%
\end{equation}
where $C>0$ depends on $p$, $\bar{\sigma}$, $h$ and $T$. By (\ref{e2-27}) and
(\ref{e2-35}), we obtain (\ref{e2-29}). By the same method, we obtain
(\ref{e2-30}).
\end{proof}

In the following theorem, we obtain the formula of $\partial_{x}%
u_{\varepsilon}$ based on $u_{\varepsilon}(t,x)=Y_{t}^{t,x}$.

\begin{theorem}
\label{th2-4}Suppose that $\varphi \in C_{0}^{\infty}(\mathbb{R})$ and $h\in
C_{0}^{\infty}(\mathbb{R}^{2})$. Let $u_{\varepsilon}$ be the solution of PDE
(\ref{new-e2-11}). Then, for each $(t,x)\in \lbrack0,T)\times \mathbb{R}$, we
have%
\begin{equation}
\partial_{x}u_{\varepsilon}(t,x)=E_{P}\left[  \Gamma_{T}^{t,x}\varphi^{\prime
}(B_{T}^{t,x})\right]  \text{ for any }P\in \mathcal{P}_{t,x}^{\varepsilon},
\label{e2-36}%
\end{equation}
where $(\Gamma_{s}^{t,x})_{s\in \lbrack t,T]}$ is the solution of the following
$G$-SDE:%
\begin{equation}
d\Gamma_{s}^{t,x}=h_{y}^{\prime}(Y_{s}^{t,x},Z_{s}^{t,x})\Gamma_{s}%
^{t,x}d\langle B\rangle_{s}+h_{z}^{\prime}(Y_{s}^{t,x},Z_{s}^{t,x})\Gamma
_{s}^{t,x}dB_{s},\text{ }\Gamma_{t}^{t,x}=1. \label{e2-37}%
\end{equation}

\end{theorem}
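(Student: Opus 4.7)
The plan is to identify $u_\varepsilon(t,x)=Y^{t,x}_t$ from (\ref{e2-26}), differentiate in $x$ by a variation-of-constants argument, and exploit that any $P\in\mathcal{P}^\varepsilon_{t,x}$ kills the non-increasing process $K^{t,x}$, i.e.\ $K^{t,x}\equiv 0$ $P$-a.s. Fix such a $P$ and $\Delta\in\mathbb{R}\setminus\{0\}$. Subtracting (\ref{e2-26}) at $(t,x+\Delta)$ and $(t,x)$ and linearizing the driver by the mean value theorem gives $\hat h_s=\alpha^\Delta_s\hat Y^\Delta_s+\beta^\Delta_s\hat Z^\Delta_s$, where
\[
\alpha^\Delta_s=\int_0^1 h'_y(Y^{t,x}_s+\theta\hat Y^\Delta_s,Z^{t,x}_s+\theta\hat Z^\Delta_s)\,d\theta,
\]
$\beta^\Delta_s$ is defined analogously with $h'_z$, and both are bounded by the Lipschitz constant of $h$. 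Let $\tilde\Gamma^\Delta$ be the (strictly positive) solution of the linear SDE $d\tilde\Gamma^\Delta_s=\alpha^\Delta_s\tilde\Gamma^\Delta_s d\langle B\rangle_s+\beta^\Delta_s\tilde\Gamma^\Delta_s dB_s$ with $\tilde\Gamma^\Delta_t=1$.

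Applying It\^o's formula to $\tilde\Gamma^\Delta_s\hat Y^\Delta_s$ under $P$ (using $K^{t,x}=0$ $P$-a.s.), the $d\langle B\rangle_s$ terms cancel exactly and one obtains
\[
d(\tilde\Gamma^\Delta_s\hat Y^\Delta_s)=\tilde\Gamma^\Delta_s(\hat Z^\Delta_s+\beta^\Delta_s\hat Y^\Delta_s)\,dB_s+\tilde\Gamma^\Delta_s\,dK^{t,x+\Delta}_s.
\]
Integrating on $[t,T]$ and taking $E_P$ (the stochastic integrals are genuine $P$-martingales since $\tilde\Gamma^\Delta\in\bigcap_{q<\infty}L^q(P)$ and $\hat Y^\Delta,\hat Z^\Delta$ are controlled by Proposition \ref{pro2-3}) gives
\[
\hat Y^\Delta_t=E_P[\tilde\Gamma^\Delta_T\hat\xi^\Delta]-E_P\left[\int_t^T\tilde\Gamma^\Delta_s\,dK^{t,x+\Delta}_s\right]\geq E_P[\tilde\Gamma^\Delta_T\hat\xi^\Delta],
\]
because $\tilde\Gamma^\Delta>0$ and $K^{t,x+\Delta}$ is non-increasing.

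Divide by $\Delta$: for $\Delta>0$ this preserves the inequality, and for $\Delta<0$ it reverses. The left-hand side converges to $\partial_x u_\varepsilon(t,x)$ by the Krylov regularity (\ref{new-e2-12}) and $u_\varepsilon(t,x)=Y^{t,x}_t$. On the right, $\hat\xi^\Delta/\Delta\to\varphi'(B^{t,x}_T)$ pointwise and is bounded by $\sup|\varphi'|$, while $\tilde\Gamma^\Delta_T\to\Gamma^{t,x}_T$ in $L^2(P)$: the coefficients $(\alpha^\Delta,\beta^\Delta)$ tend to $(h'_y(Y^{t,x},Z^{t,x}),h'_z(Y^{t,x},Z^{t,x}))$ in $P$-measure thanks to (\ref{e2-27})--(\ref{e2-29}), and this combined with their uniform boundedness and a Gronwall estimate for $E_P[|\tilde\Gamma^\Delta_s-\Gamma^{t,x}_s|^2]$ gives the stability. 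Dominated convergence then yields $E_P[\tilde\Gamma^\Delta_T\hat\xi^\Delta/\Delta]\to E_P[\Gamma^{t,x}_T\varphi'(B^{t,x}_T)]$. Passing $\Delta\to 0^+$ in the $\geq$ version and $\Delta\to 0^-$ in the $\leq$ version produces both inequalities $\partial_x u_\varepsilon(t,x)\gtreqless E_P[\Gamma^{t,x}_T\varphi'(B^{t,x}_T)]$, hence (\ref{e2-36}).

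The main technical point is the $L^2(P)$-stability of the linear SDE: since the convergence of the coefficients is only in $P$-measure rather than uniform, one must upgrade it via the $L^p$-integrability of $\Gamma^{t,x}$ and $\tilde\Gamma^\Delta$ (all moments are finite, by boundedness of $\alpha^\Delta,\beta^\Delta$) together with the BDG inequality under $P$. Verifying that the $dB$-martingales have zero $P$-expectation is a secondary integrability check, handled directly by Cauchy--Schwarz together with Proposition \ref{pro2-3}.
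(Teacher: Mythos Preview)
Your argument is correct and follows essentially the same strategy as the paper: linearize the driver, apply It\^o's formula to the product of $\hat Y^\Delta$ with a multiplicative adjoint process, and squeeze using $\Gamma>0$, $dK^{t,x+\Delta}\le 0$ and the a priori existence of $\partial_x u_\varepsilon$ from (\ref{new-e2-12}). The only variation is in the linearization: the paper expands $\hat h_r$ at the base point $(Y^{t,x}_r,Z^{t,x}_r)$ with a quadratic remainder $I^\Delta_r$, so it works with the fixed process $\Gamma^{t,x}$ and controls $\Delta^{-1}E_P[\int \Gamma^{t,x}_r I^\Delta_r\,d\langle B\rangle_r]\to 0$ directly via Proposition \ref{pro2-3}; you instead linearize exactly by the mean-value theorem, which makes the $d\langle B\rangle$ terms cancel but forces you to carry a $\Delta$-dependent $\tilde\Gamma^\Delta$ and prove the SDE stability $\tilde\Gamma^\Delta_T\to\Gamma^{t,x}_T$ in $L^2(P)$. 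Both routes are standard and of comparable difficulty.
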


\begin{proof}
For each $\Delta \in \mathbb{R}$, we use the notations $(\hat{Y}_{s}^{\Delta
})_{s\in \lbrack t,T]}$ and $(\hat{Z}_{s}^{\Delta})_{s\in \lbrack t,T]}$ as in
the proof of Proposition \ref{pro2-3}. Then, for any given $P\in
\mathcal{P}_{t,x}^{\varepsilon}$, we have%
\[
\hat{Y}_{s}^{\Delta}=\hat{Y}_{T}^{\Delta}+\int_{s}^{T}\hat{h}_{r}d\langle
B\rangle_{r}-\int_{s}^{T}\hat{Z}_{r}^{\Delta}dB_{r}-\int_{s}^{T}%
dK_{r}^{t,x+\Delta},\text{ }P\text{-a.s.,}%
\]
where%
\begin{align*}
\hat{h}_{r}  &  =h(Y_{r}^{t,x+\Delta},Z_{r}^{t,x+\Delta})-h(Y_{r}^{t,x}%
,Z_{r}^{t,x})\\
&  =h_{y}^{\prime}(Y_{r}^{t,x},Z_{r}^{t,x})\hat{Y}_{r}^{\Delta}+h_{z}^{\prime
}(Y_{r}^{t,x},Z_{r}^{t,x})\hat{Z}_{r}^{\Delta}+I_{r}^{\Delta}.
\end{align*}
Since $h\in C_{0}^{\infty}(\mathbb{R}^{2})$, we get $|I_{r}^{\Delta}|\leq
C(|\hat{Y}_{r}^{\Delta}|^{2}+|\hat{Z}_{r}^{\Delta}|^{2})$, where $C>0$ depends
on $h$. Applying It\^{o}'s formula to $\hat{Y}_{s}^{\Delta}\Gamma_{s}^{t,x}$
on $[t,T]$ under $P$, we obtain%
\begin{equation}
\hat{Y}_{t}^{\Delta}=\hat{Y}_{T}^{\Delta}\Gamma_{T}^{t,x}+\int_{t}^{T}%
\Gamma_{r}^{t,x}I_{r}^{\Delta}d\langle B\rangle_{r}-\int_{t}^{T}(\Gamma
_{r}^{t,x}\hat{Z}_{r}^{\Delta}+h_{z}^{\prime}(Y_{r}^{t,x},Z_{r}^{t,x}%
)\Gamma_{r}^{t,x}\hat{Y}_{r}^{\Delta})dB_{r}-\int_{t}^{T}\Gamma_{r}%
^{t,x}dK_{r}^{t,x+\Delta}. \label{e2-38}%
\end{equation}
Noting that $\hat{Y}_{t}^{\Delta}=u_{\varepsilon}(t,x+\Delta)-u_{\varepsilon
}(t,x)$, we get%
\begin{equation}
\frac{u_{\varepsilon}(t,x+\Delta)-u_{\varepsilon}(t,x)}{\Delta}=\frac
{1}{\Delta}E_{P}\left[  \hat{Y}_{T}^{\Delta}\Gamma_{T}^{t,x}+\int_{t}%
^{T}\Gamma_{r}^{t,x}I_{r}^{\Delta}d\langle B\rangle_{r}-\int_{t}^{T}\Gamma
_{r}^{t,x}dK_{r}^{t,x+\Delta}\right]  . \label{e2-39}%
\end{equation}
By (\ref{e2-27}), (\ref{e2-29}), $\varphi \in C_{0}^{\infty}(\mathbb{R})$ and
$h\in C_{0}^{\infty}(\mathbb{R}^{2})$, we can easily deduce that%
\begin{equation}
\lim_{\Delta \rightarrow0}\frac{1}{\Delta}E_{P}\left[  \hat{Y}_{T}^{\Delta
}\Gamma_{T}^{t,x}+\int_{t}^{T}\Gamma_{r}^{t,x}I_{r}^{\Delta}d\langle
B\rangle_{r}\right]  =E_{P}\left[  \Gamma_{T}^{t,x}\varphi^{\prime}%
(B_{T}^{t,x})\right]  . \label{e2-40}%
\end{equation}
Since $\Gamma_{r}^{t,x}>0$, $dK_{r}^{t,x+\Delta}\leq0$ and $\partial
_{x}u_{\varepsilon}(t,x)$ exists, we obtain by (\ref{e2-39}) and (\ref{e2-40})
that%
\[
E_{P}\left[  \Gamma_{T}^{t,x}\varphi^{\prime}(B_{T}^{t,x})\right]
\leq \partial_{x+}u_{\varepsilon}(t,x)=\partial_{x}u_{\varepsilon
}(t,x)=\partial_{x-}u_{\varepsilon}(t,x)\leq E_{P}\left[  \Gamma_{T}%
^{t,x}\varphi^{\prime}(B_{T}^{t,x})\right]  ,
\]
which implies the desired result.
\end{proof}

Now we give the estimate for $\partial_{xx}^{2}u_{\varepsilon}$.

\begin{theorem}
\label{th2-5}Suppose that $\varphi \in C_{0}^{\infty}(\mathbb{R})$ and $h\in
C_{0}^{\infty}(\mathbb{R}^{2})$. Let $u_{\varepsilon}$ be the solution of PDE
(\ref{new-e2-11}). Then%
\[
\partial_{xx}^{2}u_{\varepsilon}(t,x)\geq-C\text{ for }(t,x)\in \lbrack
0,T)\times \mathbb{R},
\]
where the constant $C>0$ depends on $\bar{\sigma}$, $\varphi$, $h$ and $T$.
\end{theorem}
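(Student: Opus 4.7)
The plan is to obtain the lower bound via a second-order finite difference in $x$, combined with the linearization argument already used in the proof of Theorem \ref{th2-4}. Fix $(t,x)\in[0,T)\times\mathbb{R}$ and $\Delta>0$. Since $K_{T}^{t,x}$ is a non-positive $G_{\varepsilon}$-martingale with $\hat{\mathbb{E}}^{\varepsilon}[K_{T}^{t,x}]=0$ and $\mathcal{P}^{\varepsilon}$ is weakly compact, the set $\mathcal{P}_{t,x}^{\varepsilon}$ is nonempty. I would choose $P\in\mathcal{P}_{t,x}^{\varepsilon}$; crucially this choice depends only on $(t,x)$, so the \emph{same} $P$ can serve for both the $+\Delta$ and $-\Delta$ perturbations.

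Repeating the It\^o computation behind (\ref{e2-38}) with $\pm\Delta$ in place of $\Delta$, taking $E_{P}$, and dropping the non-negative term $-E_{P}[\int_{t}^{T}\Gamma_{r}^{t,x}\,dK_{r}^{t,x\pm\Delta}]$ (non-negative because $\Gamma^{t,x}\geq 0$ and $K^{t,x\pm\Delta}$ is non-increasing) yields
\[
u_{\varepsilon}(t,x\pm\Delta)-u_{\varepsilon}(t,x)\geq E_{P}[\hat{Y}_{T}^{\pm\Delta}\Gamma_{T}^{t,x}]-E_{P}\Big[\int_{t}^{T}\Gamma_{r}^{t,x}|I_{r}^{\pm\Delta}|\,d\langle B\rangle_{r}\Big].
\]
Summing the two inequalities and using $B_{T}^{t,x\pm\Delta}=B_{T}^{t,x}\pm\Delta$ pathwise, the leading terms coalesce into
\[
E_{P}\big[\big(\varphi(B_{T}^{t,x}+\Delta)+\varphi(B_{T}^{t,x}-\Delta)-2\varphi(B_{T}^{t,x})\big)\Gamma_{T}^{t,x}\big]\geq-\|\varphi''\|_{\infty}\Delta^{2}\,E_{P}[\Gamma_{T}^{t,x}],
\]
which is $\geq-C\Delta^{2}$ once one notes that $\Gamma^{t,x}$ solves the linear SDE (\ref{e2-37}) with coefficients bounded by $\|h'_{y}\|_{\infty}+\|h'_{z}\|_{\infty}<\infty$ and that $d\langle B\rangle_{r}\leq 2\bar{\sigma}^{2}\,dr$ under $P$, so standard Dol\'eans--Dade estimates give $E_{P}[\Gamma_{T}^{t,x}]\leq C$ uniformly in $\varepsilon\in(0,\bar{\sigma})$.

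The delicate step---and the main obstacle---is controlling the error $E_{P}[\int_{t}^{T}\Gamma_{r}^{t,x}|I_{r}^{\pm\Delta}|\,d\langle B\rangle_{r}]$ by $O(\Delta^{2})$ \emph{uniformly in} $\varepsilon$. The plan here is to combine $|I_{r}^{\pm\Delta}|\leq C(|\hat{Y}_{r}^{\pm\Delta}|^{2}+|\hat{Z}_{r}^{\pm\Delta}|^{2})$, the pathwise bound $|\hat{Y}_{r}^{\pm\Delta}|\leq C|\Delta|$ from (\ref{e2-27}), and Proposition \ref{pro2-3} with $p=4$ to produce $E_{P}[(\int_{t}^{T}|\hat{Z}_{r}^{\pm\Delta}|^{2}\,d\langle B\rangle_{r})^{2}]\leq C\Delta^{4}$, then close via Cauchy--Schwarz against a uniform $L^{q}$-bound on $\sup_{r\leq T}\Gamma_{r}^{t,x}$. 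Uniformity in $\varepsilon$ hinges entirely on the constants in Proposition \ref{pro2-3} depending only on $\bar{\sigma},\varphi,h,T$ and not on $\varepsilon$.

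Combining everything yields $u_{\varepsilon}(t,x+\Delta)+u_{\varepsilon}(t,x-\Delta)-2u_{\varepsilon}(t,x)\geq -C'\Delta^{2}$ with $C'$ independent of $\varepsilon$. Since $u_{\varepsilon}\in C^{1+\alpha/2,2+\alpha}$ on $[0,T-\delta]\times\mathbb{R}$ by (\ref{new-e2-12}), dividing by $\Delta^{2}$ and letting $\Delta\downarrow 0$ delivers the claimed bound $\partial_{xx}^{2}u_{\varepsilon}(t,x)\geq-C'$ for every $(t,x)\in[0,T)\times\mathbb{R}$.
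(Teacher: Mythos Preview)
Your argument is correct and the key ingredients---the linearization (\ref{e2-38}), the sign of the $K$-term, and the $O(\Delta^{2})$ control of the remainder $I_{r}^{\pm\Delta}$ via Proposition \ref{pro2-3}---are exactly the ones the paper relies on. The route, however, is genuinely different.

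The paper works with the \emph{first-order} difference quotient of $\partial_{x}u_{\varepsilon}$. It chooses two measures, $P\in\mathcal{P}_{t,x}^{\varepsilon}$ and $P^{\Delta}\in\mathcal{P}_{t,x+\Delta}^{\varepsilon}$, and from (\ref{e2-38}) obtains a lower bound $\hat{Y}_{t}^{\Delta}\geq E_{P}[\Gamma_{T}^{t,x}\varphi'(B_{T}^{t,x})]\Delta-C\Delta^{2}$ and an upper bound $\hat{Y}_{t}^{\Delta}\leq E_{P^{\Delta}}[\Gamma_{T}^{t,x+\Delta}\varphi'(B_{T}^{t,x+\Delta})]\Delta+C\Delta^{2}$. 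Theorem \ref{th2-4} then identifies the two expectations as $\partial_{x}u_{\varepsilon}(t,x)$ and $\partial_{x}u_{\varepsilon}(t,x+\Delta)$, respectively, and combining the two bounds yields $(\partial_{x}u_{\varepsilon}(t,x+\Delta)-\partial_{x}u_{\varepsilon}(t,x))/\Delta\geq-C$ directly.

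Your approach instead attacks the \emph{second-order} central difference of $u_{\varepsilon}$ with a single measure $P\in\mathcal{P}_{t,x}^{\varepsilon}$, summing the $+\Delta$ and $-\Delta$ lower bounds so that the first-order terms coalesce into $\varphi(B_{T}^{t,x}+\Delta)+\varphi(B_{T}^{t,x}-\Delta)-2\varphi(B_{T}^{t,x})$, which is already $O(\Delta^{2})$. This is a bit more elementary: you never invoke the explicit formula (\ref{e2-36}) for $\partial_{x}u_{\varepsilon}$, and you avoid the asymmetry of working under two different optimal measures. The price is a slightly heavier estimate on the $\hat{Z}$-term (you need Proposition \ref{pro2-3} with $p=4$ plus Cauchy--Schwarz against $\sup_{r}\Gamma_{r}^{t,x}$), whereas the paper gets by with $p=2$ because (\ref{e2-43}) already isolates the $O(\Delta)$ part via Theorem \ref{th2-4}. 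Both approaches yield constants depending only on $\bar{\sigma},\varphi,h,T$, which is the crucial point.
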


\begin{proof}
For each $(t,x,\Delta)\in \lbrack0,T)\times \mathbb{R}\times \mathbb{R}$, we use
the notations $(\hat{Y}_{s}^{\Delta})_{s\in \lbrack t,T]}$ and $(\hat{Z}%
_{s}^{\Delta})_{s\in \lbrack t,T]}$ as in the proof of Proposition
\ref{pro2-3}. For any given $P\in \mathcal{P}_{t,x}^{\varepsilon}$, we obtain
by (\ref{e2-38}) that%
\[
\hat{Y}_{t}^{\Delta}=E_{P}\left[  \hat{Y}_{T}^{\Delta}\Gamma_{T}^{t,x}%
+\int_{t}^{T}\Gamma_{r}^{t,x}I_{r}^{\Delta}d\langle B\rangle_{r}-\int_{t}%
^{T}\Gamma_{r}^{t,x}dK_{r}^{t,x+\Delta}\right]  .
\]
Since $\Gamma_{r}^{t,x}>0$ and $dK_{r}^{t,x+\Delta}\leq0$, we get%
\[
\hat{Y}_{t}^{\Delta}\geq E_{P}\left[  \hat{Y}_{T}^{\Delta}\Gamma_{T}%
^{t,x}+\int_{t}^{T}\Gamma_{r}^{t,x}I_{r}^{\Delta}d\langle B\rangle_{r}\right]
.
\]
Noting that $|\hat{Y}_{T}^{\Delta}-\varphi^{\prime}(B_{T}^{t,x})\Delta|\leq
C\Delta^{2}$ and $|I_{r}^{\Delta}|\leq C(|\hat{Y}_{r}^{\Delta}|^{2}+|\hat
{Z}_{r}^{\Delta}|^{2})$, where $C>0$ depends on $\varphi$ and $h$, we deduce
by Proposition \ref{pro2-3} that%
\begin{equation}
\hat{Y}_{t}^{\Delta}\geq E_{P}\left[  \hat{Y}_{T}^{\Delta}\Gamma_{T}%
^{t,x}+\int_{t}^{T}\Gamma_{r}^{t,x}I_{r}^{\Delta}d\langle B\rangle_{r}\right]
\geq E_{P}\left[  \Gamma_{T}^{t,x}\varphi^{\prime}(B_{T}^{t,x})\right]
\Delta-C\Delta^{2}, \label{e2-43}%
\end{equation}
where $C>0$ depends on $\bar{\sigma}$, $\varphi$, $h$ and $T$. For any given
$P^{\Delta}\in \mathcal{P}_{t,x+\Delta}^{\varepsilon}$, applying It\^{o}'s
formula to $\hat{Y}_{s}^{\Delta}\Gamma_{s}^{t,x+\Delta}$ on $[t,T]$ under
$P^{\Delta}$, we obtain%
\[
\hat{Y}_{t}^{\Delta}=E_{P^{\Delta}}\left[  \hat{Y}_{T}^{\Delta}\Gamma
_{T}^{t,x+\Delta}+\int_{t}^{T}\Gamma_{r}^{t,x+\Delta}\tilde{I}_{r}^{\Delta
}d\langle B\rangle_{r}+\int_{t}^{T}\Gamma_{r}^{t,x+\Delta}dK_{r}^{t,x}\right]
,
\]
where $\tilde{I}_{r}^{\Delta}=h(Y_{r}^{t,x+\Delta},Z_{r}^{t,x+\Delta}%
)-h(Y_{r}^{t,x},Z_{r}^{t,x})-h_{y}^{\prime}(Y_{r}^{t,x+\Delta},Z_{r}%
^{t,x+\Delta})\hat{Y}_{r}^{\Delta}-h_{z}^{\prime}(Y_{r}^{t,x+\Delta}%
,Z_{r}^{t,x+\Delta})\hat{Z}_{r}^{\Delta}$. Since $\Gamma_{r}^{t,x+\Delta}>0$
and $dK_{r}^{t,x}\leq0$, we get%
\[
\hat{Y}_{t}^{\Delta}\leq E_{P^{\Delta}}\left[  \hat{Y}_{T}^{\Delta}\Gamma
_{T}^{t,x+\Delta}+\int_{t}^{T}\Gamma_{r}^{t,x+\Delta}\tilde{I}_{r}^{\Delta
}d\langle B\rangle_{r}\right]  .
\]
Similar to the proof of (\ref{e2-43}), we have%
\begin{equation}
\hat{Y}_{t}^{\Delta}\leq E_{P^{\Delta}}\left[  \hat{Y}_{T}^{\Delta}\Gamma
_{T}^{t,x+\Delta}+\int_{t}^{T}\Gamma_{r}^{t,x+\Delta}\tilde{I}_{r}^{\Delta
}d\langle B\rangle_{r}\right]  \leq E_{P^{\Delta}}\left[  \Gamma
_{T}^{t,x+\Delta}\varphi^{\prime}(B_{T}^{t,x+\Delta})\right]  \Delta
+C\Delta^{2}, \label{e2-44}%
\end{equation}
where $C>0$ depends on $\bar{\sigma}$, $\varphi$, $h$ and $T$. By Theorem
\ref{th2-4}, (\ref{e2-43}) and (\ref{e2-44}), we obtain%
\[
\frac{\partial_{x}u_{\varepsilon}(t,x+\Delta)-\partial_{x}u_{\varepsilon
}(t,x)}{\Delta}=\frac{1}{\Delta^{2}}\left \{  E_{P^{\Delta}}\left[  \Gamma
_{T}^{t,x+\Delta}\varphi^{\prime}(B_{T}^{t,x+\Delta})\right]  \Delta
-E_{P}\left[  \Gamma_{T}^{t,x}\varphi^{\prime}(B_{T}^{t,x})\right]
\Delta \right \}  \geq-C,
\]
which implies the desired result.
\end{proof}

\begin{remark}
The constant $C$ in the above theorem is independent of $\varepsilon \in
(0,\bar{\sigma})$.
\end{remark}

\subsection{Existence and uniqueness of $G$-BSDEs}

We first give the following existence and uniqueness result of $G$-BSDE
(\ref{e2-12}).

\begin{lemma}
\label{pro2-6}Let $\varphi \in C_{0}^{\infty}(\mathbb{R})$ and $h\in
C_{0}^{\infty}(\mathbb{R}^{2})$. Then, for each given $p>1$, $G$-BSDE
(\ref{e2-12}) has a unique $L^{p}$-solution $(Y,Z,K)$ in the $G$-expectation space.
\end{lemma}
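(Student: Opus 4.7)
The plan is to upgrade the solution $(Y,Z,K)$ produced in the extended $\tilde{G}$-expectation space (Lemma \ref{pro2-2}) so that it lives in the $G$-expectation space, using the uniform lower bound $\partial_{xx}^2 u_\varepsilon \geq -C$ of Theorem \ref{th2-5}. Uniqueness in the $G$-expectation space is immediate from the a priori estimates (\ref{e2-2}) and (\ref{e2-8}) of Proposition \ref{pro2-1}, so I focus on existence. I will build $B$-adapted approximants $(\bar{Y}^\varepsilon,\bar{Z}^\varepsilon,\bar{K}^\varepsilon)$ entirely on $(\Omega_T,L_G^p(\Omega_T),\hat{\mathbb{E}})$, show they are Cauchy in the $G$-expectation norms, and identify the limit with $(Y,Z,K)$.

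Specifically, set $\bar{Y}^\varepsilon_t := u_\varepsilon(t,B_t)$ and $\bar{Z}^\varepsilon_t := \partial_x u_\varepsilon(t,B_t)$ with $B$ the canonical $G$-Brownian motion. By (\ref{e2-14}) and the Hölder regularity (\ref{new-e2-12}) of $u_\varepsilon$ on $[0,T-\delta]\times\mathbb{R}$, a routine time-discretization gives $\bar{Y}^\varepsilon\in S_G^p(0,T)$ and $\bar{Z}^\varepsilon\in H_G^{2,p}(0,T;\langle B\rangle)$. Itô's formula under $\hat{\mathbb{E}}$ applied to $u_\varepsilon(t,B_t)$, together with PDE (\ref{new-e2-11}), yields
\[
\bar{Y}^\varepsilon_t = \varphi(B_T) + \int_t^T h(\bar{Y}^\varepsilon_s,\bar{Z}^\varepsilon_s)\,d\langle B\rangle_s - \int_t^T \bar{Z}^\varepsilon_s\,dB_s - (\bar{K}^\varepsilon_T - \bar{K}^\varepsilon_t),
\]
with $\bar{K}^\varepsilon_t = \int_0^t [\tfrac{1}{2} a^\varepsilon_s\,d\langle B\rangle_s - G_\varepsilon(a^\varepsilon_s)\,ds]$ and $a^\varepsilon_s := \partial_{xx}^2 u_\varepsilon(s,B_s) + 2h(\bar{Y}^\varepsilon_s,\bar{Z}^\varepsilon_s)$. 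Since $G_\varepsilon(a)=G(a)+\tfrac{\varepsilon^2}{2}a$, I decompose $\bar{K}^\varepsilon = N^\varepsilon + R^\varepsilon$ where $N^\varepsilon_t := \int_0^t[\tfrac{1}{2} a^\varepsilon_s\,d\langle B\rangle_s - G(a^\varepsilon_s)\,ds]$ is a non-increasing $G$-martingale (because $\tfrac{1}{2} a x - G(a) \leq 0$ for every $x\in[0,\bar{\sigma}^2]$) and $R^\varepsilon_t := -\tfrac{\varepsilon^2}{2}\int_0^t a^\varepsilon_s\,ds$.

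The heart of the argument is a Cauchy-in-$\varepsilon$ estimate for $(\bar{Y}^\varepsilon,\bar{Z}^\varepsilon,\bar{K}^\varepsilon)$ modelled on Proposition \ref{pro2-1}. Theorem \ref{th2-5} gives $a^\varepsilon_s \geq -C'$ uniformly, so $R^\varepsilon_t \leq \tfrac{\varepsilon^2 C'T}{2}$; combined with the uniform $L_G^p$-bound on $\bar{K}^\varepsilon_T$ (obtained from the BSDE identity via (\ref{e2-14}) and B-D-G), this yields uniform $L_G^p$-bounds on $N^\varepsilon_T$ and $R^\varepsilon_T$, and via $|a^\varepsilon|\leq a^\varepsilon + 2C'$ the uniform bound $\varepsilon^2\int_0^T|a^\varepsilon_s|\,ds \leq -2R^\varepsilon_T + 2\varepsilon^2 C'T$. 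Running the Itô computation of Proposition \ref{pro2-1} on $|\bar{Y}^\varepsilon_t - \bar{Y}^{\varepsilon'}_t|$, the $dN^\varepsilon$, $dN^{\varepsilon'}$ terms are absorbed by Lemma 3.4 of \cite{HJPS1}, while the $dR^\varepsilon$, $dR^{\varepsilon'}$ contributions are dominated by $\sup_t|\bar{Y}^\varepsilon_t-\bar{Y}^{\varepsilon'}_t|\cdot O_{L_G^p}(1)$; the factor $\sup_t|\bar{Y}^\varepsilon_t-\bar{Y}^{\varepsilon'}_t|$ vanishes in $S_G^p$ because $\bar{Y}^\varepsilon\to Y$ in $S_G^p$ by Part 2 of the proof of Lemma \ref{pro2-2}. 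This gives the Cauchy property in $S_G^p\times H_G^{2,p}\times L_G^p$.

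Passing to the limit produces an $L^p$-solution of (\ref{e2-12}) entirely in the $G$-expectation space. The $\tilde{G}$-uniqueness of Lemma \ref{pro2-2} forces this limit to be $(Y,Z,K)$, so $Z\in H_G^{2,p}(0,T;\langle B\rangle)$ and $K_T\in L_G^p(\Omega_T)$; consistency of the conditional expectations then gives $K_t = \hat{\mathbb{E}}_t[K_T]$, so $K$ is a non-increasing $G$-martingale. The main obstacle is the control of the non-martingale remainder $R^\varepsilon$: a priori $\int_0^T a^\varepsilon_s\,ds$ could blow up like $\varepsilon^{-2}$, and only the uniform lower bound of Theorem \ref{th2-5} keeps $\varepsilon^2\int_0^T a^\varepsilon_s\,ds$ bounded in $L_G^p$, so that $R^\varepsilon$ enters the Cauchy estimate as a genuine $o(1)$ perturbation.
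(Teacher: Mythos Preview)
Your argument is correct and follows the same core strategy as the paper: define the $B$-adapted approximants $u_\varepsilon(t,B_t)$ and $\partial_x u_\varepsilon(t,B_t)$, use Theorem~\ref{th2-5} to control the second-order remainder uniformly in $\varepsilon$, and conclude that $\partial_x u_\varepsilon(\cdot,B_\cdot)$ converges in $H_G^{2,p}(0,T;\langle B\rangle)$, forcing $Z\in H_G^{2,p}$ and $K_T\in L_G^p(\Omega_T)$.

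The packaging differs in two cosmetic ways. First, the paper stays in the extended $\tilde G$-space and compares $\tilde Y^\varepsilon=u_\varepsilon(\cdot,B_\cdot)$ directly to the already-constructed limit $Y$ (via It\^o on $|\tilde Y^\varepsilon-Y|^2$), whereas you run a Cauchy argument in $\varepsilon$ entirely inside the $G$-space and only afterwards identify the limit with $(Y,Z,K)$ via $\tilde G$-uniqueness. Second, the paper splits the driving process as a non-increasing process $L^\varepsilon$ plus the bounded term $\tfrac{\varepsilon^2}{2}(a^\varepsilon)^-\,ds$ (which is $\leq C$ by Theorem~\ref{th2-5}), while you split $\bar K^\varepsilon$ into a non-increasing $G$-martingale $N^\varepsilon$ and the remainder $R^\varepsilon=-\tfrac{\varepsilon^2}{2}\int a^\varepsilon\,ds$. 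These are the same decomposition regrouped: $L^\varepsilon=N^\varepsilon-\tfrac{\varepsilon^2}{2}\int (a^\varepsilon)^+\,ds$. In both cases the key estimate is that Theorem~\ref{th2-5} bounds $(a^\varepsilon)^-$ uniformly, which makes the total variation of the error piece uniformly $L_G^p$-bounded.

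One small caveat: your appeal to Lemma~3.4 of \cite{HJPS1} to annihilate the $dN^\varepsilon$ terms presupposes $a^\varepsilon\in M_G^1(0,T)$, but $\partial_{xx}^2 u_\varepsilon$ is only controlled on $[0,T-\delta]$ by (\ref{new-e2-12}). This is easily handled (work on $[0,T-\delta]$ and let $\delta\downarrow 0$), or you can bypass it entirely: since $N^\varepsilon$ is non-increasing with $|N^\varepsilon_T|$ uniformly in $L_G^p$, the cruder bound $\bigl|\int_0^T \hat Y\,dN^\varepsilon\bigr|\le \sup_t|\hat Y_t|\cdot|N^\varepsilon_T|$ already gives what you need. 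This is precisely how the paper treats its $dL^\varepsilon$ and $dK$ terms, so the martingale property of $N^\varepsilon$ is not actually required.
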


\begin{proof}
The uniqueness is due to (\ref{e2-2}) and (\ref{e2-8}) in Proposition
\ref{pro2-1}. In the following, we give the proof of existence.

By Lemma \ref{pro2-2}, for each given $p>1$, $G$-BSDE (\ref{e2-12}) has a
unique $L^{p}$-solution $(Y,Z,K)$ in the extended $\tilde{G}$-expectation
space, i.e.,
\begin{equation}
Y_{t}=\varphi(B_{T})+\int_{t}^{T}h(Y_{s},Z_{s})d\langle B\rangle_{s}-\int
_{t}^{T}Z_{s}dB_{s}-(K_{T}-K_{t}). \label{e2-45}%
\end{equation}
Let $u_{\varepsilon}$ be the solution of PDE (\ref{new-e2-11}) for
$\varepsilon \in(0,\bar{\sigma})$. Applying It\^{o}'s formula to
$u_{\varepsilon}(t,B_{t})$ under $\tilde{G}$-expectation, we get by
(\ref{e2-14}) that%
\begin{equation}%
\begin{array}
[c]{cl}%
\tilde{Y}_{t}^{\varepsilon}= & \varphi(B_{T})+\int_{t}^{T}h(\tilde{Y}%
_{s}^{\varepsilon},\tilde{Z}_{s}^{\varepsilon})d\langle B\rangle_{s}-\int
_{t}^{T}\frac{1}{2}\varepsilon^{2}\left(  \partial_{xx}^{2}u_{\varepsilon
}(s,B_{s})+2h(\tilde{Y}_{s}^{\varepsilon},\tilde{Z}_{s}^{\varepsilon})\right)
^{-}ds\\
& -\int_{t}^{T}\tilde{Z}_{s}^{\varepsilon}dB_{s}-(L_{T}^{\varepsilon}%
-L_{t}^{\varepsilon}),
\end{array}
\label{e2-25}%
\end{equation}
where $\tilde{Y}_{t}^{\varepsilon}=u_{\varepsilon}(t,B_{t})$, $\tilde{Z}%
_{t}^{\varepsilon}=\partial_{x}u_{\varepsilon}(t,B_{t})$ and%
\[%
\begin{array}
[c]{cl}%
L_{t}^{\varepsilon}= & \int_{0}^{t}\frac{1}{2}\left[  \partial_{xx}%
^{2}u_{\varepsilon}(s,B_{s})+2h(\tilde{Y}_{s}^{\varepsilon},\tilde{Z}%
_{s}^{\varepsilon})\right]  d\langle B\rangle_{s}-\int_{0}^{t}G\left(
\partial_{xx}^{2}u_{\varepsilon}(s,B_{s})+2h(\tilde{Y}_{s}^{\varepsilon
},\tilde{Z}_{s}^{\varepsilon})\right)  ds\\
& -\int_{0}^{t}\frac{1}{2}\varepsilon^{2}\left(  \partial_{xx}^{2}%
u_{\varepsilon}(s,B_{s})+2h(\tilde{Y}_{s}^{\varepsilon},\tilde{Z}%
_{s}^{\varepsilon})\right)  ^{+}ds.
\end{array}
\]
Since $0\leq d\langle B\rangle_{s}\leq \bar{\sigma}^{2}ds$ under
$\mathbb{\tilde{E}}$, we deduce that $L^{\varepsilon}$ is non-increasing with
$L_{0}^{\varepsilon}=0$ under $\mathbb{\tilde{E}}$.

In the proof of Lemma \ref{pro2-2}, we know that, for each given $p>1$,%
\begin{equation}
\mathbb{\tilde{E}}\left[  \sup_{t\leq T}|\tilde{Y}_{t}^{\varepsilon}%
-Y_{t}|^{p}+\left(  \int_{0}^{T}|\partial_{x}u_{\varepsilon}(t,B_{t}%
+\varepsilon \tilde{B}_{t})-Z_{t}|^{2}d\langle B\rangle_{t}\right)
^{p/2}\right]  \rightarrow0\text{ as }\varepsilon \rightarrow0. \label{e2-46}%
\end{equation}
Thus $|Y|+|Z|\leq C$ by (\ref{e2-14}), where $C>0$ depends on $\bar{\sigma}$,
$\varphi$, $h$ and $T$. By (\ref{e2-45}), we get%
\[
\mathbb{\tilde{E}}\left[  |K_{T}|^{2}\right]  \leq C\mathbb{\tilde{E}}\left[
\sup_{t\leq T}|Y_{t}|^{2}+1+\int_{0}^{T}|Z_{t}|^{2}d\langle B\rangle
_{t}\right]  \leq C,
\]
where $C>0$ depends on $\bar{\sigma}$, $\varphi$, $h$ and $T$. By Theorem
\ref{th2-5}, we know $\partial_{xx}^{2}u_{\varepsilon}\geq-C$ for
$\varepsilon \in(0,\bar{\sigma})$, where $C>0$ depends on $\bar{\sigma}$,
$\varphi$, $h$ and $T$. Thus%
\[
\left(  \partial_{xx}^{2}u_{\varepsilon}(s,B_{s})+2h(\tilde{Y}_{s}%
^{\varepsilon},\tilde{Z}_{s}^{\varepsilon})\right)  ^{-}\leq C\text{ for }%
s\in \lbrack0,T]\text{ and }\varepsilon \in(0,\bar{\sigma}),
\]
where $C>0$ depends on $\bar{\sigma}$, $\varphi$, $h$ and $T$. By
(\ref{e2-14}) and (\ref{e2-25}), we have $|\tilde{Y}^{\varepsilon}|+|\tilde
{Z}^{\varepsilon}|\leq C$ for $\varepsilon \in(0,\bar{\sigma})$ and%
\[
\mathbb{\tilde{E}}\left[  |L_{T}^{\varepsilon}|^{2}\right]  \leq
C\mathbb{\tilde{E}}\left[  \sup_{t\leq T}|\tilde{Y}_{t}^{\varepsilon}%
|^{2}+1+\int_{0}^{T}|\tilde{Z}_{t}^{\varepsilon}|^{2}d\langle B\rangle
_{t}\right]  \leq C\text{ for }\varepsilon \in(0,\bar{\sigma}),
\]
where $C>0$ depends on $\bar{\sigma}$, $\varphi$, $h$ and $T$.

Applying It\^{o}'s formula to $|\tilde{Y}_{t}^{\varepsilon}-Y_{t}|^{2}$ on
$[0,T]$, we obtain
\begin{align*}
\mathbb{\tilde{E}}\left[  \int_{0}^{T}|\tilde{Z}_{t}^{\varepsilon}-Z_{t}%
|^{2}d\langle B\rangle_{t}\right]   &  \leq C\mathbb{\tilde{E}}\left[
\int_{0}^{T}|\tilde{Y}_{t}^{\varepsilon}-Y_{t}|dt+(|L_{T}^{\varepsilon
}|+|K_{T}|)\sup_{t\leq T}|\tilde{Y}_{t}^{\varepsilon}-Y_{t}|\right] \\
&  \leq C\left(  \mathbb{\tilde{E}}\left[  \sup_{t\leq T}|\tilde{Y}%
_{t}^{\varepsilon}-Y_{t}|^{2}\right]  \right)  ^{1/2},
\end{align*}
where $C>0$ depends on $\bar{\sigma}$, $\varphi$, $h$ and $T$. By
(\ref{e2-46}) and $|Z|+|\tilde{Z}^{\varepsilon}|\leq C$ for $\varepsilon
\in(0,\bar{\sigma})$, we get%
\[
\lim_{\varepsilon \downarrow0}\mathbb{\tilde{E}}\left[  \left(  \int_{0}%
^{T}|\tilde{Z}_{t}^{\varepsilon}-Z_{t}|^{2}d\langle B\rangle_{t}\right)
^{p/2}\right]  =0\text{ for each }p>1.
\]
Thus $Z\in H_{G}^{2,p}(0,T;\langle B\rangle)$, and then $K_{T}\in L_{G}%
^{p}(\Omega_{T})$ by (\ref{e2-45}).
\end{proof}

Moreover, we extend the above result to the following two lemmas.

\begin{lemma}
\label{le2-7}Let $t_{1}\in \lbrack0,T)$, $\varphi \in C_{b.Lip}(\mathbb{R})$,
$h_{1}\in C_{0}^{\infty}(\mathbb{R})$ and $h_{2}\in C_{0}^{\infty}%
(\mathbb{R}^{2})$. Then, for each given $p>1$, $G$-BSDE%
\begin{equation}
Y_{t}=\varphi(B_{T}-B_{t_{1}})+\int_{t}^{T}h_{1}(Y_{s})ds+\int_{t}^{T}%
h_{2}(Y_{s},Z_{s})d\langle B\rangle_{s}-\int_{t}^{T}Z_{s}dB_{s}-(K_{T}-K_{t})
\label{e2-47}%
\end{equation}
has a unique $L^{p}$-solution $(Y,Z,K)$ in the $G$-expectation space.
Furthermore, $Y_{t}=u(t,B_{t}-B_{t_{1}})$ for $t\in \lbrack t_{1},T]$, where
$u(t,x)=Y_{t}^{t,x}$ and $(Y_{s}^{t,x})_{s\in \lbrack t,T]}$ satisfies the
following $G$-BSDE:%
\begin{equation}
Y_{s}^{t,x}=\varphi(x+B_{T}-B_{t})+\int_{s}^{T}h_{1}(Y_{r}^{t,x})dr+\int
_{s}^{T}h_{2}(Y_{r}^{t,x},Z_{r}^{t,x})d\langle B\rangle_{r}-\int_{s}^{T}%
Z_{r}^{t,x}dB_{r}-(K_{T}^{t,x}-K_{s}^{t,x}). \label{e2-48}%
\end{equation}

\end{lemma}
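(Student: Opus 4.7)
The plan is to reduce Lemma \ref{le2-7} to the drifted analogue of Lemma \ref{pro2-6} in three steps: extend Lemma \ref{pro2-6} to accommodate the $h_1(Y_s)\,ds$ drift, absorb the shifted terminal $\varphi(B_T - B_{t_1})$ via the stationary-increment property of $G$-Brownian motion, and finally approximate to reach $\varphi \in C_{b.Lip}$. Uniqueness is immediate from Proposition \ref{pro2-1}, whose proof only uses the Lipschitz assumption (H2) and the Gronwall step (\ref{e2-4}); the extra $ds$-drift contributes only a bounded Lipschitz perturbation that changes the constant $C$.

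For existence, I would first redo the proof of Lemma \ref{pro2-6} with the PDE (\ref{new-e2-11}) replaced by
\[
\partial_t u + G_\varepsilon\!\left(\partial_{xx}^2 u + 2 h_2(u, \partial_x u)\right) + h_1(u) = 0, \qquad u(T,x) = \varphi(x).
\]
Since $h_1 \in C_0^\infty(\mathbb{R})$ enters as a bounded Lipschitz lower-order term, Krylov's $C^{1+\alpha/2,2+\alpha}$ regularity (\ref{new-e2-12}), the uniform bounds (\ref{e2-14}), and the Cauchy argument (\ref{e2-18})--(\ref{e2-22}) in the extended $\tilde G$-space all carry over with constants now also depending on $h_1$. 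For the key Theorem \ref{th2-5}, the variational SDE (\ref{e2-37}) picks up an additional $h_1'(Y_s^{t,x})\,\Gamma_s^{t,x}\,ds$ term which is bounded and enters linearly, so Proposition \ref{pro2-3}, Theorem \ref{th2-4} and the estimates (\ref{e2-43})--(\ref{e2-44}) go through and still yield $\partial_{xx}^2 u_\varepsilon \geq -C$ with $C$ independent of $\varepsilon$. The shifted terminal is then absorbed by setting $\bar B_s := B_{s+t} - B_t$, a $G$-Brownian motion on $[0, T-t]$: the family (\ref{e2-48}) becomes a drifted Lemma \ref{pro2-6}-type BSDE for $\bar B$ with terminal $\varphi(x + \bar B_{T-t})$, producing $u(t,x) = Y_t^{t,x}$ continuous on $[t_1,T]\times\mathbb{R}$ via (\ref{e2-27}). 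For $\varphi \in C_{b.Lip}$ I would approximate by $\varphi^n \in C_0^\infty$ with uniformly bounded Lipschitz constants and $\varphi^n \to \varphi$ locally uniformly; by (\ref{e2-2}), (\ref{e2-7}) and (\ref{e2-8}) the sequence $(Y^n, Z^n, K^n)$ is Cauchy in $S_G^p(0,T) \times H_G^{2,p}(0,T;\langle B\rangle) \times L_G^p(\Omega_T)$, and the limit solves (\ref{e2-48}).

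For the Markov representation on $[t_1,T]$: when $\varphi^n \in C_0^\infty$, applying It\^o's formula to $u_\varepsilon^n(t, B_t - B_{t_1} + \varepsilon \tilde B_t)$ in the $\tilde G$-space as in (\ref{e2-13})--(\ref{e2-15}) and letting $\varepsilon \downarrow 0$ yields $Y_t^n = u^n(t, B_t - B_{t_1})$, and passing $n \to \infty$ combines the Cauchy convergence above with the local uniform continuity $u^n \to u$. For $t \in [0, t_1]$ the terminal $Y_{t_1} = u(t_1, 0)$ is a deterministic constant, so the restricted BSDE on $[0,t_1]$ again falls under the drifted Lemma \ref{pro2-6} (with a constant $C_{b.Lip}$ terminal) and supplies a solution that concatenates with the one on $[t_1, T]$; global uniqueness on $[0,T]$ follows from Proposition \ref{pro2-1}. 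I expect the main obstacle to be maintaining the $\varepsilon$-uniform lower bound of Theorem \ref{th2-5} under the $h_1(u)$ perturbation, since the argument in (\ref{e2-38})--(\ref{e2-44}) depends on the precise structure of the variational equation and must be reexamined carefully once an additional linear term enters.
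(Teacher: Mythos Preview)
Your proposal is correct and follows the paper's approach: approximate $\varphi\in C_{b.Lip}$ by $\varphi_n\in C_0^\infty$, invoke the drifted analogue of Lemma~\ref{pro2-6} for each $\varphi_n$ (the paper simply writes ``similar to the proof of Lemma~\ref{pro2-6}'' at this step, leaving implicit the splitting at $t_1$ and the modification of the PDE and of Theorems~\ref{th2-4}--\ref{th2-5} that you spell out), and pass to the limit using (\ref{e2-2}), (\ref{e2-8}) and Theorem~\ref{th1-1}. One simplification worth adopting: the paper obtains the Markov representation $Y_t=u(t,B_t-B_{t_1})$ not by revisiting It\^o's formula for $u_\varepsilon^n$ in the extended $\tilde G$-space, but directly from the a~priori estimate (\ref{e2-2}), which yields $|Y_t-u(t,x)|\leq C|B_t-B_{t_1}-x|$ for every $x\in\mathbb{R}$ and hence the identity upon taking $x=B_t-B_{t_1}$; this avoids any return to the $\varepsilon$-approximation once existence is established.
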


\begin{proof}
The uniqueness is due to (\ref{e2-2}) and (\ref{e2-8}) in Proposition
\ref{pro2-1}. For each given $p>1$, we can find a sequence $\varphi_{n}\in
C_{0}^{\infty}(\mathbb{R})$, $n\geq1$, such that $\mathbb{\hat{E}}\left[
|\varphi_{n}(B_{T}-B_{t_{1}})-\varphi(B_{T}-B_{t_{1}})|^{p+1}\right]
\rightarrow0$ as $n\rightarrow \infty$. Similar to the proof of Lemma
\ref{pro2-6}, the following $G$-BSDE%
\begin{equation}
Y_{t}^{n}=\varphi_{n}(B_{T}-B_{t_{1}})+\int_{t}^{T}h_{1}(Y_{s}^{n})ds+\int
_{t}^{T}h_{2}(Y_{s}^{n},Z_{s}^{n})d\langle B\rangle_{s}-\int_{t}^{T}Z_{s}%
^{n}dB_{s}-(K_{T}^{n}-K_{t}^{n}) \label{e2-49}%
\end{equation}
has a unique $L^{p}$-solution $(Y^{n},Z^{n},K^{n})$ in the $G$-expectation
space. By (\ref{e2-2}), (\ref{e2-8}) in Proposition \ref{pro2-1} and Theorem
\ref{th1-1}, we can easily deduce%
\[
\lim_{n,m\rightarrow \infty}\mathbb{\hat{E}}\left[  \sup_{t\leq T}|Y_{t}%
^{n}-Y_{t}^{m}|^{p}+\left(  \int_{0}^{T}|Z_{t}^{n}-Z_{t}^{m}|^{2}d\langle
B\rangle_{t}\right)  ^{p/2}+|K_{T}^{n}-K_{T}^{m}|^{p}\right]  =0.
\]
Thus there exist $Y\in S_{G}^{p}(0,T)$, $Z\in H_{G}^{2,p}(0,T;\langle
B\rangle)$ and a non-increasing $G$-martingale $K$ with $K_{0}=0$ and
$K_{T}\in L_{G}^{p}(\Omega_{T})$ such that%
\[
\lim_{n\rightarrow \infty}\mathbb{\hat{E}}\left[  \sup_{t\leq T}|Y_{t}%
^{n}-Y_{t}|^{p}+\left(  \int_{0}^{T}|Z_{t}^{n}-Z_{t}|^{2}d\langle B\rangle
_{t}\right)  ^{p/2}+|K_{T}^{n}-K_{T}|^{p}\right]  =0.
\]
From this we can easily get%
\[
\lim_{n\rightarrow \infty}\mathbb{\hat{E}}\left[  \sup_{t\leq T}\left(
\int_{t}^{T}|h_{1}(Y_{s}^{n})-h_{1}(Y_{s})|ds+\int_{t}^{T}|h_{2}(Y_{s}%
^{n},Z_{s}^{n})-h_{2}(Y_{s},Z_{s})|d\langle B\rangle_{s}+\left \vert \int
_{t}^{T}(Z_{s}^{n}-Z_{s})dB_{s}\right \vert \right)  ^{p}\right]  =0.
\]
Thus $(Y,Z,K)$ satisfies $G$-BSDE (\ref{e2-47}) by taking $n\rightarrow \infty$
in (\ref{e2-49}).

From the above proof, we know that $G$-BSDE (\ref{e2-48}) has a unique $L^{p}%
$-solution $(Y^{t,x},Z^{t,x},K^{t,x})$ and $Y_{t}^{t,x}\in \mathbb{R}$. By
(\ref{e2-2}) in Proposition \ref{pro2-1}, we obtain that%
\[
|u(t,x)-u(t,x^{\prime})|\leq C|x-x^{\prime}|\text{ and }|Y_{t}-u(t,x)|\leq
C|B_{t}-B_{t_{1}}-x|
\]
where the constant $C>0$ depends on $\bar{\sigma}$, $\varphi$, $h_{1}$,
$h_{2}$ and $T$. Thus we get $Y_{t}=u(t,B_{t}-B_{t_{1}})$ for $t\in \lbrack
t_{1},T]$.
\end{proof}

\begin{lemma}
\label{le2-8}Let $\xi \in Lip(\Omega_{T})$, $f(t,y)=\sum_{i=1}^{N_{1}}f_{t}%
^{i}h_{1}^{i}(y)$ and $g(t,y,z)=\sum_{j=1}^{N_{2}}g_{t}^{j}h_{2}^{j}(y,z)$
with $f^{i}$, $g^{j}\in M^{0}(0,T)$, $h_{1}^{i}\in C_{0}^{\infty}(\mathbb{R}%
)$, $h_{2}^{j}\in C_{0}^{\infty}(\mathbb{R}^{2})$, $i\leq N_{1}$, $j\leq
N_{2}$. Then $G$-BSDE (\ref{e2-1}) has a unique $L^{p}$-solution $(Y,Z,K)$ for
each given $p>1$.
\end{lemma}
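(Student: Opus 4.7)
The uniqueness follows at once from estimates (\ref{e2-2}) and (\ref{e2-8}) in Proposition \ref{pro2-1}. For existence, I would build the solution by a backward induction over a common refining partition that reduces each step to Lemma \ref{le2-7} with a Lipschitz parameter dependence.

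First, refine the time grids of the step processes $f^i, g^j \in M^0(0,T)$ together with the times appearing in the $Lip$-representation of $\xi$ into one partition $0 = s_0 < s_1 < \cdots < s_M = T$. Writing $\mathbf{b}_k := (B_{s_1}, B_{s_2}-B_{s_1}, \ldots, B_{s_k}-B_{s_{k-1}})$, one then has $\xi = V_M(\mathbf{b}_M)$ for some $V_M \in C_{b.Lip}(\mathbb{R}^M)$, and for each $k$ and $t \in [s_{k-1}, s_k)$, $f_t^i = \Phi_i^k(\mathbf{b}_{k-1})$ and $g_t^j = \Psi_j^k(\mathbf{b}_{k-1})$ with $\Phi_i^k, \Psi_j^k \in C_{b.Lip}(\mathbb{R}^{k-1})$.

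Proceed backwards. Assume inductively that a solution has been built on $[s_k, T]$ with $Y_{s_k} = V_k(\mathbf{b}_k)$ for some $V_k \in C_{b.Lip}(\mathbb{R}^k)$. For each fixed parameter $x \in \mathbb{R}^{k-1}$, consider the $G$-BSDE on $[s_{k-1}, s_k]$
\begin{equation*}
Y^{(x)}_t = V_k(x, B_{s_k}-B_{s_{k-1}}) + \int_t^{s_k} h_1^{(x)}(Y^{(x)}_s)\,ds + \int_t^{s_k} h_2^{(x)}(Y^{(x)}_s,Z^{(x)}_s)\,d\langle B\rangle_s - \int_t^{s_k} Z^{(x)}_s\,dB_s - (K^{(x)}_{s_k}-K^{(x)}_t),
\end{equation*}
where $h_1^{(x)}(y) := \sum_{i=1}^{N_1} \Phi_i^k(x) h_1^i(y) \in C_0^\infty(\mathbb{R})$ and $h_2^{(x)}(y,z) := \sum_{j=1}^{N_2} \Psi_j^k(x) h_2^j(y,z) \in C_0^\infty(\mathbb{R}^2)$. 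By Lemma \ref{le2-7} applied to the shifted interval $[0, s_k-s_{k-1}]$ (using stationarity of $G$-Brownian increments), this parameterized BSDE has a unique $L^p$-solution $(Y^{(x)}, Z^{(x)}, K^{(x)})$, and $Y^{(x)}_{s_{k-1}} = U_k(x)$ is a deterministic function of $x$. Substituting $x = \mathbf{b}_{k-1}$ yields an adapted process on $[s_{k-1}, s_k]$, which I concatenate with the already constructed solution on $[s_k, T]$, setting $V_{k-1} := U_k$. Iterating down to $k = 1$ produces the desired solution on all of $[0, T]$.

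The main obstacle is to verify that $U_k \in C_{b.Lip}(\mathbb{R}^{k-1})$ at each step so that the induction closes. This is obtained by applying estimate (\ref{e2-2}) to the parameterized BSDEs at two values $x, x'$: the boundedness and Lipschitz continuity in $x$ of $V_k(x,\cdot), \Phi_i^k(x), \Psi_j^k(x)$, together with the uniform bounds of $h_1^i, h_2^j$, yield a Lipschitz estimate on $U_k$ independent of $x$. A secondary point to check is that the concatenated $K$ is a genuine non-increasing $G$-martingale on $[0, T]$; this follows from the tower property of $\mathbb{\hat{E}}$ combined with the observation that for each fixed $x$, $K^{(x)}$ is a non-increasing $G$-martingale on its subinterval while $\mathbf{b}_{k-1}$ enters only as a $\mathcal{B}(\Omega_{s_{k-1}})$-measurable parameter.
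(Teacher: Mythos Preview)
Your backward-induction skeleton is the same high-level strategy as the paper's (the paper only writes out one split point $t_1$ and says the general case is similar). The gap is the phrase ``substituting $x=\mathbf{b}_{k-1}$ yields an adapted process.'' For $Y$ this is harmless, since Lemma \ref{le2-7} supplies the pointwise formula $Y_t^{(x)}=u(t,x,B_t-B_{s_{k-1}})$ with $u$ bounded Lipschitz, so plugging in $\mathbf{b}_{k-1}$ visibly lands in $S_G^p$. For $Z$ and $K$ there is no such formula: $Z^{(x)}$ is only an element of the abstract completion $H_G^{2,p}(\,\cdot\,;\langle B\rangle)$, and there is no jointly measurable version $(x,\omega)\mapsto Z^{(x)}(\omega)$ into which a random $x$ may be substituted. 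Even granting a measurable selection, showing that the substituted process lies in $H_G^{2,p}$ is exactly the uniform-approximation difficulty the introduction of the paper singles out. The paper's fix is a partition-of-unity discretisation $Z^n_t=\sum_i l_i^n(B_{t_1})Z_t^{x_i^n}$, which is a finite $Lip(\Omega_{t_1})$-linear combination of elements of $H_G^{2,p}$ and hence itself in $H_G^{2,p}$; the Cauchy property (\ref{e2-54}) then follows from a conditional version of (\ref{e2-8}) under each $P\in\mathcal{P}$.

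Your argument for the $G$-martingale property of $K$ is likewise too quick. The assertion that ``$\mathbf{b}_{k-1}$ enters only as a measurable parameter, so the tower property applies'' would be valid for a linear conditional expectation, but $\hat{\mathbb{E}}_t$ is sublinear and does not commute with an uncountable mixture over $x$. The paper needs a \emph{second} discretisation, with indicator weights $\tilde l_i^n=I_{[-n+i/n,\,-n+(i+1)/n)}$, so that $\tilde K^n_t=\sum_i \tilde l_i^n(B_{t_1})K_t^{x_i}$ is a $G$-martingale by Proposition 2.5 in \cite{HJPS1} (which requires disjoint indicator coefficients summing to one); one then passes to the limit in $L_G^p$. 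Both approximation devices---smooth $l_i^n$ for $H_G^{2,p}$-membership of $Z$, indicator $\tilde l_i^n$ for the martingale property of $K$---are the substance of the existence proof and are missing from your sketch.
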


\begin{proof}
The uniqueness is due to (\ref{e2-2}) and (\ref{e2-8}) in Proposition
\ref{pro2-1}. For the existence, we only prove the special case $\xi
=\varphi(B_{t_{1}},B_{T}-B_{t_{1}})$, $f(t,y)=0$ and $g(t,y,z)=(I_{[0,t_{1}%
)}(t)+\psi(B_{t_{1}})I_{[t_{1},T]}(t))h_{2}(y,z)$, the general case is similar.

By Lemma \ref{le2-7}, $G$-BSDE
\begin{equation}
Y_{t}^{x}=\varphi(x,B_{T}-B_{t_{1}})+\int_{t}^{T}\psi(x)h_{2}(Y_{s}^{x}%
,Z_{s}^{x})d\langle B\rangle_{s}-\int_{t}^{T}Z_{s}^{x}dB_{s}-(K_{T}^{x}%
-K_{t}^{x}) \label{e2-50}%
\end{equation}
has a unique $L^{p}$-solution $(Y^{x},Z^{x},K^{x})$ for each given $p>1$.
Furthermore, $Y_{t}^{x}=u(t,x,B_{t}-B_{t_{1}})$ for $t\in \lbrack t_{1},T]$,
where $u(t,x,x^{\prime})=Y_{t}^{t,x,x^{\prime}}$ and $(Y_{s}^{t,x,x^{\prime}%
})_{s\in \lbrack t,T]}$ satisfies the following $G$-BSDE:%
\[
Y_{s}^{t,x,x^{\prime}}=\varphi(x,x^{\prime}+B_{T}-B_{t})+\int_{s}^{T}%
\psi(x)h_{2}(Y_{r}^{t,x,x^{\prime}},Z_{r}^{t,x,x^{\prime}})d\langle
B\rangle_{r}-\int_{s}^{T}Z_{r}^{t,x,x^{\prime}}dB_{r}-(K_{T}^{t,x,x^{\prime}%
}-K_{s}^{t,x,x^{\prime}}).
\]
By (\ref{e2-2}) in Proposition \ref{pro2-1}, we obtain that, for $t\in \lbrack
t_{1},T]$, $x$, $x^{\prime}$, $\tilde{x}$, $\tilde{x}^{\prime}\in \mathbb{R}$,
\begin{equation}
|u(t,x,x^{\prime})|\leq C\text{ and }|u(t,x,x^{\prime})-u(t,\tilde{x}%
,\tilde{x}^{\prime})|\leq C(|x-\tilde{x}|+|x^{\prime}-\tilde{x}^{\prime}|)
\label{e2-51}%
\end{equation}
where the constant $C>0$ depends on $\bar{\sigma}$, $\varphi$, $\psi$, $h_{2}$
and $T$.

For each positive integer $n$, by partition of unity theorem, we can find
$l_{i}^{n}\in C_{0}^{\infty}(\mathbb{R})$, $i=1$,$\ldots$,$k_{n}$, such that%
\[
0\leq l_{i}^{n}\leq1\text{ and }\lambda(\text{supp}(l_{i}^{n}))\leq \frac{1}%
{n}\text{ for }i\leq k_{n}\text{, }I_{[-n,n]}(x)\leq \sum_{i=1}^{k_{n}}%
l_{i}^{n}(x)\leq1,
\]
where $\lambda(\cdot)$ is the Lebesgue measure. For $t\in \lbrack t_{1},T]$,
set%
\[
Y_{t}^{n}=\sum_{i=1}^{k_{n}}l_{i}^{n}(B_{t_{1}})Y_{t}^{x_{i}^{n}}\text{,
}Z_{t}^{n}=\sum_{i=1}^{k_{n}}l_{i}^{n}(B_{t_{1}})Z_{t}^{x_{i}^{n}}\text{,
}K_{t}^{n}=\sum_{i=1}^{k_{n}}l_{i}^{n}(B_{t_{1}})K_{t}^{x_{i}^{n}}\text{,}%
\]
where $l_{i}^{n}(x_{i}^{n})>0$. Then, by (\ref{e2-50}), we get that, for
$t\in \lbrack t_{1},T]$,
\begin{equation}
Y_{t}^{n}=Y_{T}^{n}+\int_{t}^{T}\sum_{i=1}^{k_{n}}l_{i}^{n}(B_{t_{1}}%
)\psi(x_{i}^{n})h_{2}(Y_{s}^{x_{i}^{n}},Z_{s}^{x_{i}^{n}})d\langle
B\rangle_{s}-\int_{t}^{T}Z_{s}^{n}dB_{s}-(K_{T}^{n}-K_{t}^{n}). \label{e2-52}%
\end{equation}
It follows from (\ref{e2-51}) that%
\begin{align*}
&  |Y_{t}^{n}-u(t,B_{t_{1}},B_{t}-B_{t_{1}})|\\
&  \leq \sum_{i=1}^{k_{n}}l_{i}^{n}(B_{t_{1}})|u(t,x_{i}^{n},B_{t}-B_{t_{1}%
})-u(t,B_{t_{1}},B_{t}-B_{t_{1}})|+\left(  1-\sum_{i=1}^{k_{n}}l_{i}%
^{n}(B_{t_{1}})\right)  |u(t,B_{t_{1}},B_{t}-B_{t_{1}})|\\
&  \leq \frac{C}{n}+\frac{C}{n}|B_{t_{1}}|,
\end{align*}
which implies%
\begin{equation}
\lim_{n,m\rightarrow \infty}\mathbb{\hat{E}}\left[  \sup_{t\in \lbrack t_{1}%
,T]}|Y_{t}^{n}-Y_{t}^{m}|^{p}\right]  =0. \label{e2-53}%
\end{equation}
Noting that $|\sum_{i=1}^{k_{n}}l_{i}^{n}(B_{t_{1}})\psi(x_{i}^{n})h_{2}%
(Y_{s}^{x_{i}^{n}},Z_{s}^{x_{i}^{n}})|\leq C$, where $C$ depends on $\psi$ and
$h_{2}$, we obtain by (\ref{e2-8}) in Proposition \ref{pro2-1} that
\begin{equation}
\lim_{n,m\rightarrow \infty}\mathbb{\hat{E}}\left[  \left(  \int_{t_{1}}%
^{T}|Z_{t}^{n}-Z_{t}^{m}|^{2}d\langle B\rangle_{t}\right)  ^{p/2}\right]  =0.
\label{e2-54}%
\end{equation}
It is easy to verify that%
\begin{align*}
&  \left \vert \sum_{i=1}^{k_{n}}l_{i}^{n}(B_{t_{1}})\psi(x_{i}^{n})h_{2}%
(Y_{s}^{x_{i}^{n}},Z_{s}^{x_{i}^{n}})-\psi(B_{t_{1}})h_{2}(Y_{s}^{n},Z_{s}%
^{n})\right \vert \\
&  \leq \frac{C}{n}(1+|B_{t_{1}}|)+C\sum_{i=1}^{k_{n}}l_{i}^{n}(B_{t_{1}%
})\left \vert h_{2}\left(  \sum_{j=1}^{k_{n}}l_{j}^{n}(B_{t_{1}})Y_{s}%
^{x_{i}^{n}},\sum_{j=1}^{k_{n}}l_{j}^{n}(B_{t_{1}})Z_{s}^{x_{i}^{n}}\right)
-h_{2}(Y_{s}^{n},Z_{s}^{n})\right \vert \\
&  \leq \frac{C}{n}(1+|B_{t_{1}}|)+C\sum_{i,j=1}^{k_{n}}l_{i}^{n}(B_{t_{1}%
})l_{j}^{n}(B_{t_{1}})\left(  \left \vert Y_{s}^{x_{i}^{n}}-Y_{s}^{x_{j}^{n}%
}\right \vert +\left \vert Z_{s}^{x_{i}^{n}}-Z_{s}^{x_{j}^{n}}\right \vert
\right)  .
\end{align*}
By (\ref{e2-51}), we know that $\left \vert Y_{s}^{x_{i}^{n}}-Y_{s}^{x_{j}^{n}%
}\right \vert \leq C|x_{i}^{n}-x_{j}^{n}|$. Similar to the proof of
(\ref{e2-8}) in Proposition \ref{pro2-1}, we deduce that, for each
$P\in \mathcal{P}$,%
\begin{align*}
&  E_{P}\left[  \left(  \int_{t_{1}}^{T}\left \vert Z_{t}^{x_{i}^{n}}%
-Z_{t}^{x_{j}^{n}}\right \vert ^{2}d\langle B\rangle_{t}\right)  ^{p/2}%
\Big{|}\mathcal{B}(\Omega_{t_{1}})\right] \\
&  \leq C\left \{  E_{P}\left[  \sup_{t\in \lbrack t_{1},T]}\left \vert
Y_{t}^{x_{i}^{n}}-Y_{t}^{x_{j}^{n}}\right \vert ^{p}\Big{|}\mathcal{B}%
(\Omega_{t_{1}})\right]  +\left(  E_{P}\left[  \sup_{t\in \lbrack t_{1}%
,T]}\left \vert Y_{t}^{x_{i}^{n}}-Y_{t}^{x_{j}^{n}}\right \vert ^{p}%
\Big{|}\mathcal{B}(\Omega_{t_{1}})\right]  \right)  ^{1/2}\right \} \\
&  \leq C\left(  |x_{i}^{n}-x_{j}^{n}|^{p}+|x_{i}^{n}-x_{j}^{n}|^{p/2}\right)
.
\end{align*}
Noting that $l_{i}^{n}(B_{t_{1}})l_{j}^{n}(B_{t_{1}})|x_{i}^{n}-x_{j}^{n}|=0$
if $|x_{i}^{n}-x_{j}^{n}|>\frac{2}{n}$, we obtain%
\begin{equation}
\lim_{n\rightarrow \infty}\sup_{P\in \mathcal{P}}E_{P}\left[  \left(
\int_{t_{1}}^{T}\left \vert \sum_{i=1}^{k_{n}}l_{i}^{n}(B_{t_{1}})\psi
(x_{i}^{n})h_{2}(Y_{s}^{x_{i}^{n}},Z_{s}^{x_{i}^{n}})-\psi(B_{t_{1}}%
)h_{2}(Y_{s}^{n},Z_{s}^{n})\right \vert d\langle B\rangle_{s}\right)
^{p}\right]  =0. \label{e2-55}%
\end{equation}
By (\ref{e2-52}), (\ref{e2-53}), (\ref{e2-54}) and (\ref{e2-55}), we get
$\lim_{n,m\rightarrow \infty}\mathbb{\hat{E}}\left[  |(K_{T}^{n}-K_{t_{1}}%
^{n})-(K_{T}^{m}-K_{t_{1}}^{m})|^{p}\right]  =0$. Thus there exist $Y\in
S_{G}^{p}(t_{1},T)$, $Z\in H_{G}^{2,p}(t_{1},T;\langle B\rangle)$ and a
non-increasing $K$ with $K_{t_{1}}=0$ and $K_{T}\in L_{G}^{p}(\Omega_{T})$
such that
\[
Y_{t}=\varphi(B_{t_{1}},B_{T}-B_{t_{1}})+\int_{t}^{T}\psi(B_{t_{1}}%
)h_{2}(Y_{s},Z_{s})d\langle B\rangle_{s}-\int_{t}^{T}Z_{s}dB_{s}-(K_{T}%
-K_{t})\text{ for }t\in \lbrack t_{1},T].
\]

In the following, we prove that $K$ is a $G$-martingale. For each positive
integer $n$, set%
\[
\tilde{l}_{i}^{n}(x)=I_{[-n+\frac{i}{n},-n+\frac{i+1}{n})}(x)\text{ for
}i=0,\ldots,2n^{2}-1,\text{ }\tilde{l}_{2n^{2}}^{n}(x)=I_{[-n,n)^{c}}(x)
\]
and%
\[
\tilde{Y}_{t}^{n}=\sum_{i=0}^{2n^{2}}\tilde{l}_{i}^{n}(B_{t_{1}}%
)Y_{t}^{-n+\frac{i}{n}}\text{, }\tilde{Z}_{t}^{n}=\sum_{i=0}^{2n^{2}}\tilde
{l}_{i}^{n}(B_{t_{1}})Z_{t}^{-n+\frac{i}{n}}\text{, }\tilde{K}_{t}^{n}%
=\sum_{i=0}^{2n^{2}}\tilde{l}_{i}^{n}(B_{t_{1}})K_{t}^{-n+\frac{i}{n}}\text{.}%
\]
Then, for $t\in \lbrack t_{1},T]$,
\[
\tilde{Y}_{t}^{n}=\tilde{Y}_{T}^{n}+\int_{t}^{T}\sum_{i=0}^{2n^{2}}\tilde
{l}_{i}^{n}(B_{t_{1}})\psi \left(  -n+\frac{i}{n}\right)  h_{2}(\tilde{Y}%
_{s}^{n},\tilde{Z}_{s}^{n})d\langle B\rangle_{s}-\int_{t}^{T}\tilde{Z}_{s}%
^{n}dB_{s}-(\tilde{K}_{T}^{n}-\tilde{K}_{t}^{n}).
\]
Similar to the above proof, we have $\lim_{n\rightarrow \infty}\mathbb{\hat{E}%
}\left[  \sup_{t\in \lbrack t_{1},T]}|\tilde{Y}_{t}^{n}-Y_{t}|^{p}\right]  =0$,
which implies
\[
\lim_{n\rightarrow \infty}\mathbb{\hat{E}}\left[  |(\tilde{K}_{T}^{n}-\tilde
{K}_{t_{1}}^{n})-(K_{T}-K_{t_{1}})|^{p}\right]  =0.
\]
By Proposition 2.5 in \cite{HJPS1}, we know that, for $t\in \lbrack t_{1},T]$,
$\mathbb{\hat{E}}_{t}\left[  \tilde{K}_{T}^{n}-\tilde{K}_{t}^{n}\right]  =0$
and%
\[
\mathbb{\hat{E}}\left[  |\mathbb{\hat{E}}_{t}\left[  K_{T}-K_{t}\right]
|\right]  =\mathbb{\hat{E}}\left[  |\mathbb{\hat{E}}_{t}\left[  K_{T}%
-K_{t}\right]  -\mathbb{\hat{E}}_{t}[\tilde{K}_{T}^{n}-\tilde{K}_{t}%
^{n}]|\right]  \leq \mathbb{\hat{E}}\left[  |(K_{T}-K_{t})-(\tilde{K}_{T}%
^{n}-\tilde{K}_{t}^{n})|\right]  ,
\]
which implies $\mathbb{\hat{E}}_{t}\left[  K_{T}\right]  =K_{t}$ by letting
$n\rightarrow \infty$. Thus we obtain an $L^{p}$-solution $(Y,Z,K)$ on
$[t_{1},T]$. Noting that $Y_{t_{1}}=$ $u(t_{1},B_{t_{1}},0)$, we obtain the
desired result by applying Lemma \ref{le2-7} to find an $L^{p}$-solution on
$[0,t_{1}]$.
\end{proof}

Now, we give the following existence and uniqueness result of $G$-BSDE
(\ref{e2-1}).

\begin{theorem}
\label{th2-2}Suppose that $\xi$, $f$ and $g$ satisfy (H1) and (H2). Then
$G$-BSDE (\ref{e2-1}) has a unique $L^{p}$-solution $(Y,Z,K)$ for each given
$p\in(1,\bar{p})$.
\end{theorem}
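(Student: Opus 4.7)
Uniqueness is immediate from Proposition \ref{pro2-1}: if $(Y^{1},Z^{1},K^{1})$ and $(Y^{2},Z^{2},K^{2})$ are two $L^{p}$-solutions with identical data, then $\hat{\xi}=\hat{f}=\hat{g}=0$, so (\ref{e2-2}) yields $Y^{1}=Y^{2}$ q.s., (\ref{e2-8}) yields $\hat{Z}=0$ in $H_{G}^{2,p}(0,T;\langle B\rangle)$, and subtracting the two BSDEs gives $K^{1}=K^{2}$.

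For existence, the plan is to approximate the data $(\xi,f,g)$ by triples that fit the hypotheses of Lemma \ref{le2-8} and then pass to the limit via the a priori estimates of Proposition \ref{pro2-1}. By density of $Lip(\Omega_{T})$ in $L_{G}^{\bar{p}}(\Omega_{T})$, choose $\xi_{n}\in Lip(\Omega_{T})$ with $\|\xi_{n}-\xi\|_{L_{G}^{\bar{p}}}\to 0$. To approximate $f(t,\omega,y)$ and $g(t,\omega,y,z)$, fix for each $n$ a smooth compactly supported partition of unity $\{\chi_{i}^{n}\}_{i}\subset C_{0}^{\infty}(\mathbb{R})$ with mesh $\leq 1/n$ whose supports cover $[-n,n]$, and analogously $\{\chi_{i,j}^{n}\}_{i,j}\subset C_{0}^{\infty}(\mathbb{R}^{2})$. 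Pick nodes $y_{i}^{n}$ and $(y_{i}^{n},z_{j}^{n})$ in the respective supports; approximate each process $f(\cdot,\cdot,y_{i}^{n})\in M_{G}^{1,\bar{p}}(0,T)$ and $g(\cdot,\cdot,y_{i}^{n},z_{j}^{n})\in H_{G}^{1,\bar{p}}(0,T;\langle B\rangle)$ by elements $f_{n}^{i},g_{n}^{i,j}\in M^{0}(0,T)$ up to error $1/n$, and set
\[
f_{n}(t,\omega,y)=\sum_{i}f_{n}^{i}(t,\omega)\chi_{i}^{n}(y),\qquad g_{n}(t,\omega,y,z)=\sum_{i,j}g_{n}^{i,j}(t,\omega)\chi_{i,j}^{n}(y,z).
\]
These data fit Lemma \ref{le2-8}, which produces a unique $L^{p}$-solution $(Y^{n},Z^{n},K^{n})$ for each $n$. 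The Lipschitz assumption (H2), combined with the mesh-and-truncation construction, ensures $f_{n}\to f$ in $M_{G}^{1,\bar{p}}$ and $g_{n}\to g$ in $H_{G}^{1,\bar{p}}$ when evaluated at any bounded arguments.

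Applying (\ref{e2-2}) and (\ref{e2-8}) to the differences $(\xi_{n}-\xi_{m},f_{n}-f_{m},g_{n}-g_{m})$, and invoking the $G$-Doob inequality (Theorem \ref{th1-1}) to upgrade the pointwise bound (\ref{e2-2}) into an $S_{G}^{p}$ bound on $\sup_{t}|Y_{t}^{n}-Y_{t}^{m}|$ (this is where the gap $p<\bar{p}$ is needed), we get
\[
\mathbb{\hat{E}}\!\left[\sup_{t\leq T}|Y_{t}^{n}-Y_{t}^{m}|^{p}+\Bigl(\int_{0}^{T}|Z_{t}^{n}-Z_{t}^{m}|^{2}d\langle B\rangle_{t}\Bigr)^{p/2}+|K_{T}^{n}-K_{T}^{m}|^{p}\right]\longrightarrow 0
\]
as $n,m\to\infty$. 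Thus $(Y^{n},Z^{n},K_{T}^{n})$ converges in $S_{G}^{p}(0,T)\times H_{G}^{2,p}(0,T;\langle B\rangle)\times L_{G}^{p}(\Omega_{T})$ to some $(Y,Z,K_{T})$; monotonicity and the $G$-martingale identity $\mathbb{\hat{E}}_{t}[K_{T}^{n}]=K_{t}^{n}$ pass to the limit (by Proposition 2.5 of \cite{HJPS1}, exactly as at the end of the proof of Lemma \ref{le2-8}), and passing to the limit in the $G$-BSDE satisfied by $(Y^{n},Z^{n},K^{n})$ yields the required $L^{p}$-solution of (\ref{e2-1}). The main obstacle is the approximation step: one must arrange that the simple-form data $f_{n},g_{n}$ converge to $f,g$ along the random trajectories $(s,Y_{s}^{n},Z_{s}^{n})$ of the approximating solutions in the norms $M_{G}^{1,p}$ and $H_{G}^{1,p}$, which requires careful calibration of the partition mesh and truncation radius and uniform a priori control on $\|Y^{n}\|_{S_{G}^{p}}$ and $\|Z^{n}\|_{H_{G}^{2,p}(0,T;\langle B\rangle)}$ coming from (\ref{e2-6})--(\ref{e2-7}).
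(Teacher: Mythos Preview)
Your overall strategy---approximate by data fitting Lemma \ref{le2-8}, prove the approximating solutions are Cauchy via Proposition \ref{pro2-1} and the Doob inequality (Theorem \ref{th1-1}), then pass to the limit---is exactly the paper's. The uniqueness argument and the passage to the limit for $K$ are correct.

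The gap is precisely the one you flag as ``the main obstacle,'' and the paper resolves it in a way your construction does not. With a single parameter $n$ tying mesh $1/n$ to domain radius $n$, your approximants $f_{n},g_{n}$ do \emph{not} have a Lipschitz constant bounded independently of $n$: near the edge of the partition-of-unity support the transition is from values of size $|f(s,\omega,n)|\sim |f(s,\omega,0)|+Ln$ down to zero over a window of width $1/n$, so the local Lipschitz constant in $y$ is of order $n(|f(s,\omega,0)|+Ln)$. Hence the constant $C=C(L)$ in Proposition \ref{pro2-1} is not uniform along your sequence, and the Cauchy estimate breaks down. The same problem hits $g_{n}$ in the $z$-variable, where there is no $L^{\infty}$ bound on $Z^{n}$ to fall back on.

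The paper's fix is a two-parameter approximation: first truncate the values, setting $\tilde f^{N}=((f-f(\cdot,0))\wedge N)\vee(-N)$ and $f^{N}=f(\cdot,0)+\tilde f^{N}$ (and likewise $g^{N}$), which preserves the Lipschitz constant $L$ exactly and satisfies $f^{N}(s,0)=f(s,0)$; then approximate $f^{N}$ by a partition-of-unity interpolant $f_{n}^{N}$ and solve via Lemma \ref{le2-8}. The key trick is to rewrite $f_{n}^{N}(s,Y_{s}^{N,n})=f^{N}(s,Y_{s}^{N,n})+\hat f_{n}^{N}(s)$ with the \emph{uniform} bound $|\hat f_{n}^{N}(s)|\leq 2N$, so that $(Y^{N,n},Z^{N,n},K^{N,n})$ solves a $G$-BSDE whose $(y,z)$-dependence has Lipschitz constant $L$ regardless of $n$. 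This gives, for fixed $N$, uniform-in-$n$ a priori bounds from (\ref{e2-6})--(\ref{e2-7}) and hence a Cauchy sequence in $n$ converging to a solution $(Y^{N},Z^{N},K^{N})$ for data $(\xi,f^{N},g^{N})$. The second limit $N\to\infty$ then uses the original $L$ throughout together with the inequality $|f^{N_{1}}-f^{N_{2}}|\leq (N_{1}\wedge N_{2})^{-\delta}|\tilde f|^{1+\delta}\leq L^{1+\delta}(N_{1}\wedge N_{2})^{-\delta}|y|^{1+\delta}$ with $\delta=\bigl[\tfrac12(\bar p/p-1)\bigr]\wedge 1$, which is exactly where the gap $p<\bar p$ is exploited. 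Your single-limit scheme cannot emulate this without the value-truncation step.
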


\begin{proof}
The uniqueness is due to (\ref{e2-2}) and (\ref{e2-8}) in Proposition
\ref{pro2-1}. For each positive integer $n$, by partition of unity theorem, we
can find $h_{i}^{n}\in C_{0}^{\infty}(\mathbb{R})$, $\tilde{h}_{j}^{n}\in
C_{0}^{\infty}(\mathbb{R}^{2})$, $i\leq k_{n}$, $j\leq \tilde{k}_{n}$, such
that%
\[
0\leq h_{i}^{n}\leq1\text{ and }\lambda(\text{supp}(h_{i}^{n}))\leq \frac{1}%
{n}\text{ for }i\leq k_{n}\text{, }I_{[-n,n]}(y)\leq \sum_{i=1}^{k_{n}}%
h_{i}^{n}(y)\leq1,
\]%
\[
0\leq \tilde{h}_{j}^{n}\leq1\text{ and }\lambda(\text{supp}(\tilde{h}_{j}%
^{n}))\leq \frac{1}{n}\text{ for }j\leq \tilde{k}_{n}\text{, }I_{[-n,n]\times
\lbrack-n,n]}(y,z)\leq \sum_{j=1}^{\tilde{k}_{n}}\tilde{h}_{j}^{n}(y,z)\leq1.
\]
For each $N>0$, set $\tilde{f}(t,y)=f(t,y)-f(t,0)$, $\tilde{g}%
(t,y,z)=g(t,y,z)-g(t,0,0)$,
\[
\tilde{f}^{N}(t,y)=(\tilde{f}(t,y)\wedge N)\vee(-N),\text{ }\tilde{g}%
^{N}(t,y,z)=(\tilde{g}(t,y,z)\wedge N)\vee(-N),
\]%
\[
f^{N}(t,y)=f(t,0)+\tilde{f}^{N}(t,y)\text{, }g^{N}(t,y,z)=g(t,0,0)+\tilde
{g}^{N}(t,y,z),
\]%
\[
f_{n}^{N}(t,y)=f(t,0)+\sum_{i=1}^{k_{n}}\tilde{f}^{N}(t,y_{i}^{n})h_{i}%
^{n}(y),\text{ }g_{n}^{N}(t,y,z)=g(t,0,0)+\sum_{j=1}^{\tilde{k}_{n}}\tilde
{g}^{N}(t,\tilde{y}_{j}^{n},\tilde{z}_{j}^{n})\tilde{h}_{j}^{n}(y,z),
\]
where $h_{i}^{n}(y_{i}^{n})>0$, $\tilde{h}_{j}^{n}(\tilde{y}_{j}^{n},\tilde
{z}_{j}^{n})>0$ for $i\leq k_{n}$, $j\leq \tilde{k}_{n}$. By Proposition
\ref{pro2-1} and Lemma \ref{le2-8}, we can easily deduce that $G$-BSDE%
\begin{equation}
Y_{t}^{N,n}=\xi+\int_{t}^{T}f_{n}^{N}(s,Y_{s}^{N,n})ds+\int_{t}^{T}g_{n}%
^{N}(s,Y_{s}^{N,n},Z_{s}^{N,n})d\langle B\rangle_{s}-\int_{t}^{T}Z_{s}%
^{N,n}dB_{s}-(K_{T}^{N,n}-K_{t}^{N,n}) \label{e2-56}%
\end{equation}
has a unique $L^{p}$-solution $(Y^{N,n},Z^{N,n},K^{N,n})$ for each given
$p\in(1,\bar{p})$. Noting that%
\[
f_{n}^{N}(s,Y_{s}^{N,n})=f^{N}(s,Y_{s}^{N,n})+\hat{f}_{n}^{N}(s)\text{ and
}g_{n}^{N}(s,Y_{s}^{N,n},Z_{s}^{N,n})=g^{N}(s,Y_{s}^{N,n},Z_{s}^{N,n})+\hat
{g}_{n}^{N}(s),
\]
where $|\hat{f}_{n}^{N}(s)|=|f_{n}^{N}(s,Y_{s}^{N,n})-f^{N}(s,Y_{s}%
^{N,n})|\leq(\frac{L}{n}+\frac{N}{n}|Y_{s}^{N,n}|)\wedge(2N)$, $|\hat{g}%
_{n}^{N}(s)|\leq \lbrack \frac{L}{n}+\frac{N}{n}(|Y_{s}^{N,n}|+|Z_{s}%
^{N,n}|)]\wedge(2N)$. By Proposition \ref{pro2-1}, we can easily deduce that,
for each given $p\in(1,\bar{p})$,
\[
\lim_{n,m\rightarrow \infty}\mathbb{\hat{E}}\left[  \sup_{t\in \lbrack
0,T]}|Y_{t}^{N,n}-Y_{t}^{N,m}|^{p}+\left(  \int_{0}^{T}|Z_{t}^{N,n}%
-Z_{t}^{N,m}|^{2}d\langle B\rangle_{t}\right)  ^{p/2}+|K_{T}^{N,n}-K_{T}%
^{N,m}|^{p}\right]  =0,
\]
which implies that $G$-BSDE%
\begin{equation}
Y_{t}^{N}=\xi+\int_{t}^{T}f^{N}(s,Y_{s}^{N})ds+\int_{t}^{T}g^{N}(s,Y_{s}%
^{N},Z_{s}^{N})d\langle B\rangle_{s}-\int_{t}^{T}Z_{s}^{N}dB_{s}-(K_{T}%
^{N}-K_{t}^{N}) \label{e2-57}%
\end{equation}
has a unique $L^{p}$-solution $(Y^{N},Z^{N},K^{N})$ for each given
$p\in(1,\bar{p})$. By (\ref{e2-6}), (\ref{e2-7}) in Proposition \ref{pro2-1}
and Theorem \ref{th1-1}, we obtain that, for each $p\in(1,\bar{p})$,%
\begin{equation}
\sup_{N>0}\mathbb{\hat{E}}\left[  \sup_{t\in \lbrack0,T]}|Y_{t}^{N}%
|^{p}+\left(  \int_{0}^{T}|Z_{t}^{N}|^{2}d\langle B\rangle_{t}\right)
^{p/2}+|K_{T}^{N}|^{p}\right]  \leq C, \label{e2-58}%
\end{equation}
where the constant $C>0$ depends on $p$, $\bar{p}$, $\bar{\sigma}$, $L$ and
$T$. For each fixed $p\in(1,\bar{p})$, we have
\[
|f^{N_{1}}(s,Y_{s}^{N_{1}})-f^{N_{2}}(s,Y_{s}^{N_{1}})|\leq(N_{1}\wedge
N_{2})^{-\delta}|\tilde{f}(s,Y_{s}^{N_{1}})|^{1+\delta}\leq L^{1+\delta}%
(N_{1}\wedge N_{2})^{-\delta}|Y_{s}^{N_{1}}|^{1+\delta},
\]%
\[
|g^{N_{1}}(s,Y_{s}^{N_{1}},Z_{s}^{N_{1}})-g^{N_{2}}(s,Y_{s}^{N_{1}}%
,Z_{s}^{N_{1}})|\leq L^{1+\delta}(N_{1}\wedge N_{2})^{-\delta}(|Y_{s}^{N_{1}%
}|+|Z_{s}^{N_{1}}|)^{1+\delta},
\]
where $\delta=[\frac{1}{2}(\frac{\bar{p}}{p}-1)]\wedge1$. Thus, by
(\ref{e2-2}), (\ref{e2-8}) in Proposition \ref{pro2-1}, (\ref{e2-58}) and
Theorem \ref{th1-1}, we get%
\[
\lim_{N_{1},N_{2}\rightarrow \infty}\mathbb{\hat{E}}\left[  \sup_{t\in
\lbrack0,T]}|Y_{t}^{N_{1}}-Y_{t}^{N_{2}}|^{p}+\left(  \int_{0}^{T}%
|Z_{t}^{N_{1}}-Z_{t}^{N_{2}}|^{2}d\langle B\rangle_{t}\right)  ^{p/2}%
+|K_{T}^{N_{1}}-K_{T}^{N_{2}}|^{p}\right]  =0,
\]
which implies the desired result by letting $N\rightarrow \infty$ in
(\ref{e2-57}).
\end{proof}

The following example shows that $f$ can not contain $z$ in $G$-BSDE
(\ref{e2-1}).

\begin{example}
Let $B$ be a $1$-dimensional $G$-Brownian motion with $G(a):=\frac{1}{2}%
\bar{\sigma}^{2}a^{+}$ for $a\in \mathbb{R}$. For each $n\geq1$, we know that
$((n^{-1}+\langle B\rangle_{s})^{-1/5})_{s\in \lbrack0,T]}\in H_{G}%
^{2,p}(0,T;\langle B\rangle)$ for each $p>1$. Since%
\[
\left \vert (n^{-1}+\langle B\rangle_{s})^{-1/5}-(\langle B\rangle_{s}%
)^{-1/5}\right \vert \leq(\langle B\rangle_{s})^{-2/5}\left \vert (n^{-1}%
+\langle B\rangle_{s})^{1/5}-(\langle B\rangle_{s})^{1/5}\right \vert \leq
n^{-1/5}(\langle B\rangle_{s})^{-2/5},
\]
we have
\[
\int_{0}^{T}|(n^{-1}+\langle B\rangle_{s})^{-1/5}-(\langle B\rangle
_{s})^{-1/5}|^{2}d\langle B\rangle_{s}\leq n^{-2/5}\int_{0}^{T}(\langle
B\rangle_{s})^{-4/5}d\langle B\rangle_{s}=5n^{-2/5}(\langle B\rangle
_{T})^{1/5}.
\]
Thus $((\langle B\rangle_{s})^{-1/5})_{s\in \lbrack0,T]}\in H_{G}%
^{2,p}(0,T;\langle B\rangle)$ for each $p>1$, which implies $\int_{0}%
^{T}(\langle B\rangle_{s})^{-1/5}dB_{s}\in L_{G}^{p}(\Omega_{T})$ for each
$p>1$. Consider the following linear $G$-BSDE:%
\begin{equation}
Y_{t}=\int_{0}^{T}(\langle B\rangle_{s})^{-1/5}dB_{s}+\int_{t}^{T}Z_{s}%
ds-\int_{t}^{T}Z_{s}dB_{s}-(K_{T}-K_{t}), \label{e2-59}%
\end{equation}
we assert that, for each given $p>1$, the above $G$-BSDE has no $L^{p}%
$-solution $(Y,Z,K)$. Otherwise, there exists an $L^{p}$-solution $(Y,Z,K)$
for some $p>1$.

For each $\varepsilon>0$, we introduce the following $\tilde{G}^{\varepsilon}%
$-expectation $\mathbb{\tilde{E}}^{\varepsilon}$. Set $\tilde{\Omega}%
_{T}=C_{0}([0,T];\mathbb{R}^{2})$ and the canonical process is denoted by
$(B,\tilde{B})$. For each $A\in \mathbb{S}_{2}$, define%
\[
\tilde{G}^{\varepsilon}\left(  A\right)  =\frac{1}{2}\sup_{\varepsilon^{2}\leq
v\leq \bar{\sigma}^{2}}\mathrm{tr}\left[  A\left(
\begin{array}
[c]{cc}%
v & 1\\
1 & \varepsilon^{-2}%
\end{array}
\right)  \right]  .
\]
By Proposition 3.1.5 in Peng \cite{P2019}, we know that $\varepsilon \tilde{B}$
is a classical $1$-dimensional standard Brownian motion under $\mathbb{\tilde
{E}}^{\varepsilon}$ and $\mathbb{\tilde{E}}^{\varepsilon}|_{Lip(\Omega_{T}%
)}\leq \mathbb{\hat{E}}$. Thus $G$-BSDE (\ref{e2-59}) still holds under
$\mathbb{\tilde{E}}^{\varepsilon}$. Similar to (\ref{new-e2-3}), we know that
$\langle B,\tilde{B}\rangle_{t}=t$ and $\langle \tilde{B}\rangle_{t}%
=\varepsilon^{-2}t$ under $\mathbb{\tilde{E}}^{\varepsilon}$. Consider the
following $G$-SDE under $\mathbb{\tilde{E}}^{\varepsilon}$:%
\[
dX_{t}=X_{t}d\tilde{B}_{t}\text{, }X_{0}=1.
\]
The solution is $X_{t}=\exp(\tilde{B}_{t}-2^{-1}\varepsilon^{-2}t)>0$.
Applying It\^{o}'s formula to $X_{t}Y_{t}$ on $[0,T]$ under $\mathbb{\tilde
{E}}^{\varepsilon}$, we get%
\[
X_{T}Y_{T}=Y_{0}+\int_{0}^{T}X_{t}Z_{t}dB_{t}+\int_{0}^{T}X_{t}Y_{t}d\tilde
{B}_{t}+\int_{0}^{T}X_{t}dK_{t}.
\]
Since $\int_{0}^{T}X_{t}dK_{t}\leq0$, we obtain%
\[
Y_{0}\geq \mathbb{\tilde{E}}^{\varepsilon}[X_{T}Y_{T}]=\mathbb{\tilde{E}%
}^{\varepsilon}\left[  X_{T}\int_{0}^{T}(\langle B\rangle_{s})^{-1/5}%
dB_{s}\right]  \text{ for each }\varepsilon>0.
\]
Applying It\^{o}'s formula to $X_{t}\int_{0}^{t}(\langle B\rangle_{s}%
)^{-1/5}dB_{s}$ on $[0,T]$ under $\mathbb{\tilde{E}}^{\varepsilon}$, we deduce%
\[
Y_{0}\geq \mathbb{\tilde{E}}^{\varepsilon}\left[  X_{T}\int_{0}^{T}(\langle
B\rangle_{s})^{-1/5}dB_{s}\right]  =\mathbb{\tilde{E}}^{\varepsilon}\left[
\int_{0}^{T}X_{s}(\langle B\rangle_{s})^{-1/5}ds\right]  \text{ for each
}\varepsilon>0.
\]
Let $E^{\varepsilon}$ be the linear $\bar{G}^{\varepsilon}$-expectation with%
\[
\bar{G}^{\varepsilon}\left(  A\right)  =\frac{1}{2}\mathrm{tr}\left[  A\left(
\begin{array}
[c]{cc}%
\varepsilon^{2} & 1\\
1 & \varepsilon^{-2}%
\end{array}
\right)  \right]  \text{ for }A\in \mathbb{S}_{2}.
\]
Since $\bar{G}^{\varepsilon}\leq \tilde{G}^{\varepsilon}$, we know that
$E^{\varepsilon}\leq \mathbb{\tilde{E}}^{\varepsilon}$. By Proposition 3.1.5 in
Peng \cite{P2019}, we know that $\varepsilon^{-1}B$ and $\varepsilon \tilde{B}$
are two classical $1$-dimensional standard Brownian motion under
$E^{\varepsilon}$. Then we get%
\[
Y_{0}\geq E^{\varepsilon}\left[  X_{T}\int_{0}^{T}(\langle B\rangle
_{s})^{-1/5}dB_{s}\right]  =\frac{5}{4}T^{4/5}\varepsilon^{-2/5}\text{ for
each }\varepsilon>0,
\]
which contradicts to $Y_{0}\in \mathbb{R}$. Thus, for each given $p>1$,
$G$-BSDE (\ref{e2-59}) has no $L^{p}$-solution $(Y,Z,K)$.
\end{example}

Finally, we give the following existence and uniqueness result of $G$-BSDE
(\ref{new-e2-4}).

\begin{theorem}
Suppose that $\xi$, $f$, $g_{ij}$, $g_{l}$, $i$, $j\leq d^{\prime}$,
$d^{\prime}<l\leq d$, satisfy (H1) and (H2). Then $G$-BSDE (\ref{new-e2-4})
has a unique $L^{p}$-solution $(Y,Z,K)$ for each given $p\in(1,\bar{p})$.
\end{theorem}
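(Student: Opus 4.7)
The plan is to execute the same four-step programme of Section 3---prior estimates, solution in the extended $\tilde G$-expectation space, uniform Hessian lower bound, approximation---for the block-structured $G$ in (\ref{new-e2-2}). Uniqueness follows immediately once Proposition \ref{pro2-1} is generalised to (\ref{new-e2-4}): one replaces $\int_t^T|\hat g_s|\,d\langle B\rangle_s$ by $\sum_{i,j\le d'}\int_t^T|\hat g_{ij,s}|\,d\langle B^i,B^j\rangle_s+\sum_{l>d'}\int_t^T|\hat g_{l,s}|\,d\langle B^l\rangle_s$ and invokes the quadratic-variation bounds (\ref{new-e2-5}), which are two-sided on the $G'$-block and one-sided on each degenerate coordinate; the Gronwall and It\^{o} arguments of Proposition \ref{pro2-1} then go through with obvious modifications to deliver the multidimensional analogues of (\ref{e2-2})--(\ref{e2-8}).

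For existence, the crucial step is the multidimensional analogue of Lemma \ref{pro2-6} for $\xi=\varphi(B_T)$ with $\varphi\in C_0^\infty(\mathbb R^d)$ and compactly supported $h_{ij}$, $h_l$. I would enlarge $\Omega_T$ by adjoining $d-d'$ independent classical Brownian motions $\tilde B^{d'+1},\dots,\tilde B^d$ to form the extended $\tilde G$-expectation, set $B^{\varepsilon,k}=B^k$ for $k\le d'$ and $B^{\varepsilon,l}=B^l+\varepsilon\tilde B^l$ for $l>d'$, and observe that $B^\varepsilon$ is a $G_\varepsilon$-Brownian motion with $G_\varepsilon(A)=G'(A')+\tfrac12\sum_{l>d'}[(\bar\sigma_l^2+\varepsilon^2)a_l^+-\varepsilon^2 a_l^-]$, which is non-degenerate in every coordinate. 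Applying It\^{o}'s formula to the viscosity solution $u_\varepsilon$ of $\partial_t u+G_\varepsilon(D^2u+2H(u,\nabla u))=0$ with $u(T,\cdot)=\varphi$, and using Krylov's interior $C^{1+\alpha/2,2+\alpha}$ estimate together with uniform Lipschitz bounds, one obtains a Cauchy family $(Y^\varepsilon,Z^\varepsilon,K^\varepsilon)$ whose limit $(Y,Z,K)$ solves the BSDE in the $\tilde G$-space exactly as in Lemma \ref{pro2-2}.

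The principal obstacle is to upgrade this $\tilde G$-solution to a genuine $G$-solution, i.e.\ to show $Z^k\in H^{2,p}_G(0,T;\langle B^k\rangle)$ and that $K\in L^p_G(\Omega_T)$ is a $G$-martingale. As in Section 3.3, this reduces to a uniform lower bound $\partial^2_{x_l x_l}u_\varepsilon(t,x)\ge -C$ for each degenerate direction $l>d'$, with $C$ independent of $\varepsilon$. The Feynman--Kac type representation $\partial_{x_l}u_\varepsilon(t,x)=E_P[\Gamma^{t,x,l}_T\,\partial_{x_l}\varphi(B^{t,x}_T)]$ of Theorem \ref{th2-4} generalises once $\Gamma^{t,x,l}$ is taken to be the scalar adjoint $G$-SDE obtained by linearising the multidimensional BSDE in the $l$-th direction; positivity of this exponential is preserved because by the second identity in (\ref{new-e2-5}) the covariations $\langle B^i,B^l\rangle$ vanish for $i\ne l$, $l>d'$, so the Dol\'eans--Dade argument of Theorem \ref{th2-4} decouples into the $G'$-block plus a diagonal degenerate contribution and survives componentwise. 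The second-difference estimate of Theorem \ref{th2-5} in direction $e_l$ then yields the desired bound, and the argument of Lemma \ref{pro2-6} delivers a uniform $L^2_{\tilde G}$ control on the non-decreasing compensator, forcing the $\varepsilon\downarrow0$ limit $K$ to be a $G$-martingale in $L^p_G$.

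With this key lemma in hand, the extensions of Lemmas \ref{le2-7}--\ref{le2-8} and the truncation/partition-of-unity argument of Theorem \ref{th2-2} carry over verbatim: enlarge the class of terminal data to $Lip(\Omega_T)$ by smooth approximation, treat simple product-form generators by freezing past $G$-Brownian increments and piecing together via partitions of unity on the frozen state, and for general $(\xi,f,g_{ij},g_l)$ under (H1)--(H2) truncate the generators at level $N$, smooth them via $C_0^\infty$-partitions of unity in $(y,z',z^l)$, and pass successively $n\to\infty$ and $N\to\infty$, using Theorem \ref{th1-1} and the $L^{\bar p}$-integrability provided by (H1), to obtain the unique $L^p$-solution $(Y,Z,K)$ for every $p\in(1,\bar p)$.
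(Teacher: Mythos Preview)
Your proposal is correct and follows exactly the route the paper intends: the paper's own proof is the single sentence ``The proof of this theorem is similar to Theorem \ref{th2-2}, we omit it,'' and you have faithfully sketched how each ingredient of Section~3 (Proposition~\ref{pro2-1}, Lemma~\ref{pro2-2}, Theorems~\ref{th2-4}--\ref{th2-5}, Lemmas~\ref{pro2-6}--\ref{le2-8}, Theorem~\ref{th2-2}) generalises to the block $G$ of (\ref{new-e2-2}). Two minor imprecisions worth tightening: the adjoint process $\Gamma^{t,x}$ in the multidimensional analogue of Theorem~\ref{th2-4} is a single scalar exponential martingale driven by \emph{all} components of $B$ (it does not depend on the direction $l$ of differentiation, so the superscript $l$ and the ``decoupling'' language are misleading---positivity comes directly from the Dol\'eans--Dade exponential for continuous martingales), and the PDE for $u_\varepsilon$ should reflect the specific structure of (\ref{new-e2-4}) (the $f$-term sits outside $G_\varepsilon$, the $g_{ij}$ enter the $A'$-block, each $g_l$ enters only the $l$-th diagonal entry) rather than a generic $G_\varepsilon(D^2u+2H)$.
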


\begin{proof}
The proof of this theorem is similar to Theorem \ref{th2-2}, we omit it.
\end{proof}

\section{Application to the regularity of fully nonlinear PDEs}

For simplicity of representation, we only consider $1$-dimensional
$G$-Brownian motion with $G(a)=\frac{1}{2}\bar{\sigma}^{2}a^{+}$, the methods
still hold for the $d$-dimensional $G$-Brownian motion with $G(\cdot)$ given
in (\ref{new-e2-2}). For each fixed $t\in \lbrack0,T]$ and $\xi \in \cap_{p\geq
2}L_{G}^{p}(\Omega_{t})$, consider the following $G$-FBSDE:%
\begin{equation}
dX_{s}^{t,\xi}=b(s,X_{s}^{t,\xi})ds+h(s,X_{s}^{t,\xi})d\langle B\rangle
_{s}+\sigma(s,X_{s}^{t,\xi})dB_{s},\text{ }X_{t}^{t,\xi}=\xi,\text{ }%
s\in \lbrack t,T], \label{e3-1}%
\end{equation}%
\begin{equation}%
\begin{array}
[c]{rl}%
Y_{s}^{t,\xi}= & \varphi(X_{T}^{t,\xi})+\int_{s}^{T}f(r,X_{r}^{t,\xi}%
,Y_{r}^{t,\xi})dr+\int_{s}^{T}g(r,X_{r}^{t,\xi},Y_{r}^{t,\xi},Z_{r}^{t,\xi
})d\langle B\rangle_{r}\\
& -\int_{s}^{T}Z_{r}^{t,\xi}dB_{r}-(K_{T}^{t,\xi}-K_{s}^{t,\xi}),
\end{array}
\label{e3-2}%
\end{equation}
where $b$, $h$, $\sigma:[0,T]\times \mathbb{R}\rightarrow \mathbb{R}$,
$\varphi:\mathbb{R}\rightarrow \mathbb{R}$, $f:[0,T]\times \mathbb{R}%
^{2}\rightarrow \mathbb{R}$, $g:[0,T]\times \mathbb{R}^{3}\rightarrow \mathbb{R}$
satisfy the following conditions:

\begin{description}
\item[(A1)] $b$, $h$, $\sigma$, $f$, $g$ are continuous in $(s,x,y,z)$.

\item[(A2)] There exist a constant $L_{1}>0$ and a positive integer $m$ such
that for any $s\in \lbrack0,T]$, $x$, $x^{\prime}$, $y$, $y^{\prime}$, $z$,
$z^{\prime}\in \mathbb{R}$,%
\[%
\begin{array}
[c]{l}%
|b(s,x)-b(s,x^{\prime})|+|h(s,x)-h(s,x^{\prime})|+|\sigma(s,x)-\sigma
(s,x^{\prime})|\leq L_{1}|x-x^{\prime}|,\\
|\varphi(x)-\varphi(x^{\prime})|\leq L_{1}(1+|x|^{m}+|x^{\prime}%
|^{m})|x-x^{\prime}|,\\
|f(s,x,y)-f(s,x^{\prime},y^{\prime})|+|g(s,x,y,z)-g(s,x^{\prime},y^{\prime
},z^{\prime})|\\
\leq L_{1}[(1+|x|^{m}+|x^{\prime}|^{m})|x-x^{\prime}|+|y-y^{\prime
}|+|z-z^{\prime}|].
\end{array}
\]

\end{description}

Under the assumptions (A1) and (A2), for each $p\geq2$, SDE (\ref{e3-1}) has a
unique solution $(X_{s}^{t,\xi})_{s\in \lbrack t,T]}\in S_{G}^{p}(t,T)$ and
$G$-BSDE (\ref{e3-2}) has a unique $L^{p}$-solution $(Y_{s}^{t,\xi}%
,Z_{s}^{t,\xi},K_{s}^{t,\xi})_{s\in \lbrack t,T]}$ with $K_{t}^{t,\xi}=0$. The
following standard estimates of SDE can be found in Chapter 5 in Peng
\cite{P2019}.

\begin{proposition}
\label{pro3-1}Suppose that (A1) and (A2) hold. Let $\xi$, $\xi^{\prime}\in
\cap_{p\geq2}L_{G}^{p}(\Omega_{t})$ with $t<T$. Then, for each $p\geq2$ and
$\delta \in \lbrack0,T-t]$, we have%
\[
\mathbb{\hat{E}}_{t}\left[  \left \vert X_{t+\delta}^{t,\xi}-X_{t+\delta
}^{t,\xi^{\prime}}\right \vert ^{p}\right]  \leq C|\xi-\xi^{\prime}|^{p}\text{
and }\mathbb{\hat{E}}_{t}\left[  \left \vert X_{t+\delta}^{t,\xi}\right \vert
^{p}\right]  \leq C(1+|\xi|^{p}),
\]
where the constant $C>0$ depends on $L_{1}$, $\bar{\sigma}$, $p$ and $T$.
\end{proposition}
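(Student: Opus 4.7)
\textbf{Proof plan for Proposition \ref{pro3-1}.} The plan is to follow the standard SDE a priori estimates, adapted to the $G$-expectation setting, using It\^o's formula, the Burkholder--Davis--Gundy (B-D-G) inequality, and Gronwall's lemma. All the ingredients are classical and already available in the framework (see Chapter 5 of \cite{P2019}); the statement is essentially a restatement of those estimates with starting time $t$ and starting value $\xi$.

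For the difference estimate, set $\Delta_s = X_s^{t,\xi} - X_s^{t,\xi'}$ for $s \in [t, t+\delta]$. Subtracting the two SDEs gives
\[
\Delta_s = (\xi - \xi') + \int_t^s \hat b_r\, dr + \int_t^s \hat h_r\, d\langle B\rangle_r + \int_t^s \hat\sigma_r\, dB_r,
\]
where $\hat b_r = b(r,X_r^{t,\xi}) - b(r,X_r^{t,\xi'})$ and similarly for $\hat h_r, \hat\sigma_r$. I would apply It\^o's formula to $(|\Delta_s|^2 + \varepsilon)^{p/2}$ on $[t, t+\delta]$ with a suitable $\varepsilon > 0$, then take $\hat{\mathbb{E}}_t[\cdot]$. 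The $dB$-integral is a symmetric martingale whose conditional $G$-expectation vanishes (by Lemma 3.4 of \cite{HJPS1} as used in the proof of Proposition \ref{pro2-1}); the $dr$ and $d\langle B\rangle_r$ terms are controlled by $L_1|\Delta_r|$ via (A2); and the quadratic variation term from $\hat\sigma$ is controlled by $L_1^2 \bar\sigma^2 |\Delta_r|^2$ using $d\langle B\rangle_r \leq \bar\sigma^2 dr$. Letting $\varepsilon \downarrow 0$ and applying the (deterministic) Gronwall inequality to $\hat{\mathbb{E}}_t[|\Delta_s|^p]$ yields the first estimate. Alternatively, one may raise the SDE difference to the $p$-th power directly, apply the B-D-G inequality for $G$-martingales to the $dB$-integral, and run Gronwall on $\hat{\mathbb{E}}_t[\sup_{r \leq s}|\Delta_r|^p]$; both routes give the same bound, with $C$ depending only on $L_1, \bar\sigma, p, T$.

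The second estimate is obtained in exactly the same fashion by working with $X_s^{t,\xi}$ itself: writing
\[
X_s^{t,\xi} = \xi + \int_t^s b(r,X_r^{t,\xi})\, dr + \int_t^s h(r,X_r^{t,\xi})\, d\langle B\rangle_r + \int_t^s \sigma(r,X_r^{t,\xi})\, dB_r,
\]
using $|b(r,x)| \leq |b(r,0)| + L_1|x|$ and similarly for $h, \sigma$ (with $b(\cdot,0), h(\cdot,0), \sigma(\cdot,0)$ continuous on $[0,T]$, hence bounded), applying It\^o to $(|X_s^{t,\xi}|^2+\varepsilon)^{p/2}$, taking $\hat{\mathbb{E}}_t$, and invoking Gronwall. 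No genuine obstacle arises: the only step requiring a little care is the vanishing of $\hat{\mathbb{E}}_t$ of the $dB$-integral term, which follows from the fact that $\int_t^{\cdot} \eta_r\, dB_r$ is a symmetric $G$-martingale for $\eta \in H_G^2$ and that the truncation argument used in obtaining (\ref{e2-5})--(\ref{e2-6}) works verbatim here. Since both bounds follow by the same mechanism, I would carry out the difference estimate in detail and then remark that the bound on $\hat{\mathbb{E}}_t[|X_{t+\delta}^{t,\xi}|^p]$ is proved identically.
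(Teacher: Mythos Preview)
Your proposal is correct and matches the paper's treatment: the paper does not give a proof at all but simply refers to Chapter 5 of Peng \cite{P2019}, where exactly the standard argument you outline (It\^o's formula or direct $p$-th power estimate, B-D-G for the $G$-stochastic integral, Lipschitz bound from (A2), $d\langle B\rangle_s\le\bar\sigma^2\,ds$, and Gronwall) is carried out. Your sketch is thus a faithful expansion of the cited reference, and nothing more is needed.
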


Set $\xi=x\in \mathbb{R}$, define%
\begin{equation}
u(t,x)=Y_{t}^{t,x}\text{ for }(t,x)\in \lbrack0,T]\times \mathbb{R}.
\label{e3-3}%
\end{equation}
Since $(B_{t+r}-B_{t})_{r\geq0}$ is still a $G$-Brownian motion, we have
$Y_{t}^{t,x}\in \mathbb{R}$.

\begin{proposition}
\label{pro3-2}Suppose that (A1) and (A2) hold. Then

\begin{description}
\item[(i)] For each $(t,x)\in \lbrack0,T)\times \mathbb{R}$, we have
$Y_{s}^{t,x}=u(s,X_{s}^{t,x})$ for $s\in \lbrack t,T]$.

\item[(ii)] $u(\cdot,\cdot)$ is the unique viscosity solution of the following
fully nonlinear PDE:%
\[
\left \{
\begin{array}
[c]{l}%
\partial_{t}u+G(\sigma^{2}(t,x)\partial_{xx}^{2}u+2h(t,x)\partial
_{x}u+2g(t,x,u,\sigma(t,x)\partial_{x}u))\\
+b(t,x)\partial_{x}u+f(t,x,u)=0,\\
u(T,x)=\varphi(x).
\end{array}
\right.
\]

\end{description}
\end{proposition}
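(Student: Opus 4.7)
The plan is to deduce part (i) from a flow / Markov property of the $G$-FBSDE combined with an approximation by simple random variables, and then to derive part (ii) from (i) by a standard test-function argument adapted to the $G$-setting.

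For part (i), I would first establish continuity of $u$: by combining Proposition \ref{pro3-1} with the prior estimate (\ref{e2-2}) in Proposition \ref{pro2-1}, one gets $|u(t,x)-u(t,x')| \le C(1+|x|^{m'}+|x'|^{m'})|x-x'|$ and a $\tfrac12$-Hölder modulus in $t$, so $u$ is continuous on $[0,T]\times\mathbb{R}$ with polynomial growth. The identity $Y_s^{t,x}=u(s,X_s^{t,x})$ is immediate when $X_s^{t,x}$ is replaced by a simple $\eta=\sum_{i=1}^{N}x_i\mathbf{1}_{A_i}$ with $A_i\in\mathcal{B}(\Omega_s)$ disjoint: by the flow property $X_r^{t,x}=X_r^{s,\eta}$ for $r\ge s$ and the uniqueness in Theorem \ref{th2-2}, one pastes the solutions $(Y^{s,x_i},Z^{s,x_i},K^{s,x_i})$ on $A_i$ to obtain the unique $L^p$-solution on $[s,T]$ starting from $(s,\eta)$, which evaluated at $s$ equals $\sum_i u(s,x_i)\mathbf{1}_{A_i}=u(s,\eta)$. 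Since $X_s^{t,x}\in\cap_{p\ge 2}L_G^p(\Omega_s)$, one approximates it by simple $\eta_n$, and the stability bound (\ref{e2-2}) together with the continuity of $u$ lets us pass to the limit.

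Granted (i), for (ii) I would argue as follows. Fix an interior $(t_0,x_0)$ and a smooth $\psi$ with $\psi\ge u$ and $\psi(t_0,x_0)=u(t_0,x_0)$. Apply Itô's formula to $\psi(s,X_s^{t_0,x_0})$ on $[t_0,t_0+\delta]$; this writes $\psi(\cdot,X^{t_0,x_0})$ as the $Y$-component of a $G$-BSDE whose generator involves $\partial_t\psi+b\,\partial_x\psi+f$ against $ds$ and $\tfrac12\sigma^2\partial_{xx}^2\psi+h\,\partial_x\psi+g(\cdot,\psi,\sigma\partial_x\psi)$ against $d\langle B\rangle_s$, plus a non-increasing $G$-martingale term built from the difference with $2G(\cdot)$. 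Using (i), $Y_s^{t_0,x_0}=u(s,X_s^{t_0,x_0})\le\psi(s,X_s^{t_0,x_0})$, and the $G$-BSDE comparison theorem of \cite{HJPS1} propagates the inequality backwards, forcing a one-sided relation between the generators at $(t_0,x_0)$ upon dividing by $\delta$ and letting $\delta\downarrow 0$; this is exactly the viscosity supersolution inequality. The subsolution inequality is symmetric, with $\psi$ touching $u$ from below. Uniqueness of the viscosity solution in the class of continuous functions with polynomial growth then follows from the standard comparison principle for such degenerate parabolic fully nonlinear PDEs, applicable because $G$ is sublinear (hence Lipschitz) and $b,h,\sigma,f,g$ satisfy (A2); see Appendix C of \cite{P2019}.

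The main obstacle is the pasting step in (i): one must verify that the restriction of the unique $L^p$-solution to the $\mathcal{B}(\Omega_s)$-measurable set $A_i$ genuinely agrees (quasi-surely) with the deterministic-initial-data solution $Y^{s,x_i}$. Once this measurability / localization step is in place, the rest — continuity of $u$, approximation of $X_s^{t,x}$ by simple variables, and the test-function computation in (ii) — is routine manipulation of the estimates already proved in Propositions \ref{pro2-1} and \ref{pro3-1}.
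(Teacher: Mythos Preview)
Your proposal is correct and matches the standard argument the paper defers to: the paper gives no self-contained proof here but simply cites Theorems 4.4 and 4.5 of \cite{HJPS}, whose method is precisely the flow/Markov property plus simple-function approximation for (i) and the test-function/It\^{o} computation for (ii) that you outline. One small imprecision: in (ii) the comparison theorem is not used to ``force a relation between the generators'' directly; rather, one introduces an auxiliary $G$-BSDE on $[t_0,t_0+\delta]$ with terminal value $\psi(t_0+\delta,X_{t_0+\delta}^{t_0,x_0})$ and the original generators, uses comparison (terminal inequality $u\le\psi$) to bound its value at $t_0$ against $u(t_0,x_0)=\psi(t_0,x_0)$, and separately uses It\^{o} on $\psi$ to relate that auxiliary value to the PDE operator evaluated at $(t_0,x_0)$ --- dividing by $\delta$ then yields the viscosity inequality.
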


\begin{proof}
The proof is the same as Theorems 4.4 and 4.5 in \cite{HJPS}, we omit it.
\end{proof}

In the following, we discuss the regularity properties of $u(\cdot,\cdot)$.
First, we study $\partial_{x}u(t,x)$. For each $(t,x)\in \lbrack0,T)\times
\mathbb{R}$ and $\Delta \in \lbrack-1,1]$, by Proposition \ref{pro3-1}, we have,
for each $p\geq2$,%
\begin{equation}
\sup_{s\in \lbrack t,T]}\mathbb{\hat{E}}\left[  \left \vert X_{s}^{t,x+\Delta
}-X_{s}^{t,x}\right \vert ^{p}\right]  \leq C|\Delta|^{p}\text{ and }\sup
_{s\in \lbrack t,T]}\mathbb{\hat{E}}\left[  \left \vert X_{s}^{t,x}\right \vert
^{p}\right]  \leq C(1+|x|^{p}), \label{e3-4}%
\end{equation}
where $C>0$ depends on $L_{1}$, $\bar{\sigma}$, $p$ and $T$. It follows from
Proposition \ref{pro2-1}, Theorem \ref{th1-1} and (\ref{e3-4}) that, for each
$p\geq2$,%
\begin{equation}
\mathbb{\hat{E}}\left[  \sup_{s\in \lbrack t,T]}\left \vert Y_{s}^{t,x+\Delta
}-Y_{s}^{t,x}\right \vert ^{p}\right]  \leq C(1+|x|^{mp})|\Delta|^{p},
\label{e3-5}%
\end{equation}
where $C>0$ depends on $L_{1}$, $\bar{\sigma}$, $p$ and $T$.

Let $\mathcal{P}$ be a weakly compact and convex set of probability measures
on $(\Omega_{T},\mathcal{B}(\Omega_{T}))$ satisfying%
\[
\mathbb{\hat{E}}\left[  \xi \right]  =\sup_{P\in \mathcal{P}}E_{P}[\xi]\text{
for }\xi \in L_{G}^{1}(\Omega_{T}).
\]
For each $(t,x)\in \lbrack0,T)\times \mathbb{R}$, set%
\[
\mathcal{P}_{t,x}=\{P\in \mathcal{P}:E_{P}[K_{T}^{t,x}]=0\}.
\]
Similar to the proof of Proposition \ref{pro2-3}, we obtain that, for each
$p\geq2$,%
\begin{equation}
E_{P}\left[  \left(  \int_{t}^{T}\left \vert Z_{s}^{t,x+\Delta}-Z_{s}%
^{t,x}\right \vert ^{2}d\langle B\rangle_{s}\right)  ^{p/2}+\left \vert
K_{T}^{t,x+\Delta}\right \vert ^{p}\right]  \leq C(1+|x|^{mp})|\Delta
|^{p}\text{ for }P\in \mathcal{P}_{t,x}, \label{e3-6}%
\end{equation}%
\begin{equation}
E_{P^{\Delta}}\left[  \left(  \int_{t}^{T}\left \vert Z_{s}^{t,x+\Delta}%
-Z_{s}^{t,x}\right \vert ^{2}d\langle B\rangle_{s}\right)  ^{p/2}+\left \vert
K_{T}^{t,x}\right \vert ^{p}\right]  \leq C(1+|x|^{mp})|\Delta|^{p}\text{ for
}P^{\Delta}\in \mathcal{P}_{t,x+\Delta}. \label{e3-7}%
\end{equation}

In order to obtain $\partial_{x}u(t,x)$, we need the following assumption.

\begin{description}
\item[(A3)] $b_{x}^{\prime}$, $h_{x}^{\prime}$, $\sigma_{x}^{\prime}$,
$\varphi^{\prime}$, $f_{x}^{\prime}$, $f_{y}^{\prime}$, $g_{x}^{\prime}$,
$g_{y}^{\prime}$, $g_{z}^{\prime}$ are continuous in $(s,x,y,z)$.
\end{description}

\begin{remark}
Under the assumptions (A2) and (A3), we can easily deduce that, for any
$s\in \lbrack0,T]$, $x$, $y$, $z\in \mathbb{R}$,%
\[%
\begin{array}
[c]{l}%
|b_{x}^{\prime}(s,x)|+|h_{x}^{\prime}(s,x)|+|\sigma_{x}^{\prime}(s,x)|\leq
L_{1}\text{, }|\varphi^{\prime}(x)|\leq L_{1}(1+2|x|^{m})\text{, }%
|g_{z}^{\prime}(s,x,y,z)|\leq L_{1},\\
|f_{x}^{\prime}(s,x,y)|+|g_{x}^{\prime}(s,x,y,z)|\leq L_{1}(1+2|x|^{m})\text{,
}|f_{y}^{\prime}(s,x,y)|+|g_{y}^{\prime}(s,x,y,z)|\leq L_{1}.
\end{array}
\]

\end{remark}

\begin{lemma}
\label{le3-3}Suppose that (A1)-(A3) hold. Then, for each $(t,x)\in
\lbrack0,T)\times \mathbb{R}$ and $p\geq2$, we have%
\begin{equation}
\lim_{\Delta \rightarrow0}\sup_{s\in \lbrack t,T]}\mathbb{\hat{E}}\left[
\left \vert \frac{X_{s}^{t,x+\Delta}-X_{s}^{t,x}}{\Delta}-\hat{X}_{s}%
^{t,x}\right \vert ^{p}\right]  =0, \label{e3-8}%
\end{equation}
where $(\hat{X}_{s}^{t,x})_{s\in \lbrack t,T]}$ is the solution of the
following $G$-SDE:%
\begin{equation}
d\hat{X}_{s}^{t,x}=b_{x}^{\prime}(s,X_{s}^{t,x})\hat{X}_{s}^{t,x}%
ds+h_{x}^{\prime}(s,X_{s}^{t,x})\hat{X}_{s}^{t,x}d\langle B\rangle_{s}%
+\sigma_{x}^{\prime}(s,X_{s}^{t,x})\hat{X}_{s}^{t,x}dB_{s}\text{, }\hat{X}%
_{t}^{t,x}=1. \label{e3-9}%
\end{equation}

\end{lemma}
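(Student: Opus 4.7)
The plan is to set $U^{\Delta}_{s}:=\Delta^{-1}(X^{t,x+\Delta}_{s}-X^{t,x}_{s})$ for $s\in[t,T]$ and to compare it with $\hat X^{t,x}$ by realising both as solutions of linear $G$-SDEs. Using the fundamental theorem of calculus on each coefficient, e.g.
$$b(s,X^{t,x+\Delta}_{s})-b(s,X^{t,x}_{s})=\Delta\cdot B^{\Delta}_{s}U^{\Delta}_{s},\quad B^{\Delta}_{s}:=\int_{0}^{1}b'_{x}(s,X^{t,x}_{s}+\lambda(X^{t,x+\Delta}_{s}-X^{t,x}_{s}))\,d\lambda,$$
and analogously defining $H^{\Delta}_{s}$, $\Sigma^{\Delta}_{s}$ from $h$ and $\sigma$, one obtains
$$U^{\Delta}_{s}=1+\int_{t}^{s}B^{\Delta}_{r}U^{\Delta}_{r}\,dr+\int_{t}^{s}H^{\Delta}_{r}U^{\Delta}_{r}\,d\langle B\rangle_{r}+\int_{t}^{s}\Sigma^{\Delta}_{r}U^{\Delta}_{r}\,dB_{r},$$
which has the same structure as (\ref{e3-9}) but with $b'_{x}(r,X^{t,x}_{r}),h'_{x}(r,X^{t,x}_{r}),\sigma'_{x}(r,X^{t,x}_{r})$ replaced by the averaged versions $B^{\Delta},H^{\Delta},\Sigma^{\Delta}$. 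Note that from (\ref{e3-4}) the family $\{U^{\Delta}\}_{|\Delta|\le 1}$ is bounded in $S_{G}^{p}(t,T)$ for every $p\ge 2$.

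Next I would form $V^{\Delta}_{s}:=U^{\Delta}_{s}-\hat X^{t,x}_{s}$. Subtracting (\ref{e3-9}) from the equation for $U^{\Delta}$ yields
$$V^{\Delta}_{s}=\int_{t}^{s}b'_{x}(r,X^{t,x}_{r})V^{\Delta}_{r}\,dr+\int_{t}^{s}h'_{x}(r,X^{t,x}_{r})V^{\Delta}_{r}\,d\langle B\rangle_{r}+\int_{t}^{s}\sigma'_{x}(r,X^{t,x}_{r})V^{\Delta}_{r}\,dB_{r}+R^{\Delta}_{s},$$
where the forcing term $R^{\Delta}_{s}$ collects the three cross contributions of the form $\int_{t}^{s}(B^{\Delta}_{r}-b'_{x}(r,X^{t,x}_{r}))U^{\Delta}_{r}\,dr$, and similarly for $h$ and $\sigma$. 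Since $b'_{x},h'_{x},\sigma'_{x}$ are bounded by $L_{1}$ (see the Remark after (A3)), the standard linear $G$-SDE estimates from Chapter 5 of \cite{P2019}, combined with the Burkholder--Davis--Gundy inequality and Gronwall's lemma, give
$$\mathbb{\hat E}\!\left[\sup_{s\in[t,T]}|V^{\Delta}_{s}|^{p}\right]\le C\,\mathbb{\hat E}\!\left[\sup_{s\in[t,T]}|R^{\Delta}_{s}|^{p}\right]$$
with $C$ depending only on $L_{1},\bar\sigma,p,T$. Hence the problem is reduced to showing $\mathbb{\hat E}[\sup_{s}|R^{\Delta}_{s}|^{p}]\to 0$ as $\Delta\to 0$.

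The main work, and the place I expect the genuine technical obstacle, is controlling $R^{\Delta}$ in the $G$-framework, because one cannot invoke pointwise a.s.\ arguments freely and must work under the sublinear expectation $\mathbb{\hat E}$. My approach is: first apply Cauchy--Schwarz together with the uniform $S_{G}^{2p}$-bound on $U^{\Delta}$ from (\ref{e3-4}) to reduce matters to showing that, for every $q\ge 1$,
$$\sup_{s\in[t,T]}\mathbb{\hat E}\bigl[|B^{\Delta}_{s}-b'_{x}(s,X^{t,x}_{s})|^{q}\bigr]+\sup_{s}\mathbb{\hat E}\bigl[|H^{\Delta}_{s}-h'_{x}(s,X^{t,x}_{s})|^{q}\bigr]+\sup_{s}\mathbb{\hat E}\bigl[|\Sigma^{\Delta}_{s}-\sigma'_{x}(s,X^{t,x}_{s})|^{q}\bigr]\longrightarrow 0.$$
For this I would use the continuity of $b'_{x},h'_{x},\sigma'_{x}$ in $(s,x)$ from (A3), the uniform bound by $L_{1}$, and the $L^{p}$-convergence $\sup_{s}\mathbb{\hat E}[|X^{t,x+\Delta}_{s}-X^{t,x}_{s}|^{p}]\to 0$ supplied by (\ref{e3-4}); combining these via a dominated-convergence argument under $\mathbb{\hat E}$ (valid since the integrands are bounded by $2L_{1}$) yields the required convergence. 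Plugging everything back gives (\ref{e3-8}), and iterating over $p\ge 2$ finishes the proof.
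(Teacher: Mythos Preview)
Your approach is essentially the same as the paper's: write the difference quotient as the solution of a linear $G$-SDE with averaged coefficients, subtract the variational equation, and reduce to showing that the forcing term vanishes in $L^{p}$. The paper works with $\tilde X^{\Delta}_{s}=\hat X^{\Delta}_{s}-\hat X^{t,x}_{s}\Delta$ rather than dividing by $\Delta$, but the algebra is identical.

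The one place where your write-up is genuinely looser than the paper is the phrase ``dominated-convergence argument under $\hat{\mathbb E}$.'' In the $G$-expectation framework there is no black-box DCT you can cite: boundedness by $2L_{1}$ together with $L^{p}$-convergence of $X^{t,x+\Delta}-X^{t,x}$ does not by itself force $\hat{\mathbb E}\bigl[|B^{\Delta}_{s}-b'_{x}(s,X^{t,x}_{s})|^{q}\bigr]\to 0$, because $\hat{\mathbb E}$ is a supremum over a non-dominated family $\mathcal P$ and $b'_{x}$ is only continuous, not uniformly continuous on $\mathbb R$. The paper makes this step explicit via a localization: for $N,\varepsilon>0$ it introduces the modulus $\omega_{N}(\varepsilon)=\sup\{|b'_{x}(r,x_{1})-b'_{x}(r,x_{2})|:|x_{1}|\le N,\ |x_{1}-x_{2}|\le\varepsilon\}$ and bounds
\[
|b'_{x}(s,X^{t,x}_{s}+\theta\hat X^{\Delta}_{s})-b'_{x}(s,X^{t,x}_{s})|\le \omega_{N}(\varepsilon)+2L_{1}\bigl(|X^{t,x}_{s}|/N+|\hat X^{\Delta}_{s}|/\varepsilon\bigr),
\]
then uses the moment bounds from (\ref{e3-4}) on the two indicator-type terms, takes $\limsup_{\Delta\to 0}$, then $\varepsilon\downarrow 0$, then $N\to\infty$. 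If you insert exactly this argument where you wrote ``dominated convergence,'' your proof is complete and coincides with the paper's.
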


\begin{proof}
Set $\hat{X}_{s}^{\Delta}=$ $X_{s}^{t,x+\Delta}-X_{s}^{t,x}$, $\tilde{X}%
_{s}^{\Delta}=$ $\hat{X}_{s}^{\Delta}-\hat{X}_{s}^{t,x}\Delta$ for
$s\in \lbrack t,T]$, we have%
\[
d\tilde{X}_{s}^{\Delta}=(b_{x}^{\prime}(s)\tilde{X}_{s}^{\Delta}+\tilde
{b}(s))ds+(h_{x}^{\prime}(s)\tilde{X}_{s}^{\Delta}+\tilde{h}(s))d\langle
B\rangle_{s}+(\sigma_{x}^{\prime}(s)\tilde{X}_{s}^{\Delta}+\tilde{\sigma
}(s))dB_{s}\text{, }\tilde{X}_{t}^{\Delta}=1,
\]
where $b_{x}^{\prime}(s)=b_{x}^{\prime}(s,X_{s}^{t,x})$,%
\begin{align*}
\tilde{b}(s)  &  =b(s,X_{s}^{t,x+\Delta})-b(s,X_{s}^{t,x})-b_{x}^{\prime
}(s,X_{s}^{t,x})\hat{X}_{s}^{\Delta}\\
&  =\hat{X}_{s}^{\Delta}\int_{0}^{1}\left[  b_{x}^{\prime}(s,X_{s}%
^{t,x}+\theta \hat{X}_{s}^{\Delta})-b_{x}^{\prime}(s,X_{s}^{t,x})\right]
d\theta,
\end{align*}
similar for $h_{x}^{\prime}(s)$, $\tilde{h}(s)$, $\sigma_{x}^{\prime}(s)$ and
$\tilde{\sigma}(s)$. By standard estimates of SDE, we get%
\begin{align*}
\sup_{s\in \lbrack t,T]}\mathbb{\hat{E}}\left[  \left \vert \tilde{X}%
_{s}^{\Delta}\right \vert ^{p}\right]   &  \leq C\mathbb{\hat{E}}\left[
\left(  \int_{t}^{T}|\tilde{b}(s)|ds\right)  ^{p}+\left(  \int_{t}^{T}%
|\tilde{h}(s)|d\langle B\rangle_{s}\right)  ^{p}+\left(  \int_{t}^{T}%
|\tilde{\sigma}(s)|^{2}d\langle B\rangle_{s}\right)  ^{p/2}\right] \\
&  \leq C\int_{t}^{T}\mathbb{\hat{E}}[|\tilde{b}(s)|^{p}+|\tilde{h}%
(s)|^{p}+|\tilde{\sigma}(s)|^{p}]ds,
\end{align*}
where $C>0$ depends on $L_{1}$, $\bar{\sigma}$, $p$ and $T$. By (\ref{e3-4})
and H\"{o}lder's inequality, we obtain%
\begin{equation}
\mathbb{\hat{E}}[|\tilde{b}(s)|^{p}]\leq C|\Delta|^{p}\left(  \mathbb{\hat{E}%
}\left[  \left(  \int_{0}^{1}|b_{x}^{\prime}(s,X_{s}^{t,x}+\theta \hat{X}%
_{s}^{\Delta})-b_{x}^{\prime}(s,X_{s}^{t,x})|d\theta \right)  ^{2p}\right]
\right)  ^{1/2}, \label{e3-10}%
\end{equation}
where $C>0$ depends on $L_{1}$, $\bar{\sigma}$, $p$ and $T$. For each $N>0$
and $\varepsilon>0$, define%
\begin{equation}
\omega_{N}(\varepsilon)=\sup \{|b_{x}^{\prime}(r,x_{1})-b_{x}^{\prime}%
(r,x_{2})|:r\in \lbrack0,T],\text{ }|x_{1}|\leq N,\text{ }|x_{1}-x_{2}%
|\leq \varepsilon \}. \label{e3-11}%
\end{equation}
Under assumption (A3), we know that $\omega_{N}(\varepsilon)\rightarrow0$ as
$\varepsilon \downarrow0$. Noting that%
\begin{equation}%
\begin{array}
[c]{l}%
|b_{x}^{\prime}(s,X_{s}^{t,x}+\theta \hat{X}_{s}^{\Delta})-b_{x}^{\prime
}(s,X_{s}^{t,x})|\\
\leq|b_{x}^{\prime}(s,X_{s}^{t,x}+\theta \hat{X}_{s}^{\Delta})-b_{x}^{\prime
}(s,X_{s}^{t,x})|I_{\{|\hat{X}_{s}^{\Delta}|\leq \varepsilon \}}+2L_{1}%
I_{\{|\hat{X}_{s}^{\Delta}|>\varepsilon \}}\\
\leq|b_{x}^{\prime}(s,X_{s}^{t,x}+\theta \hat{X}_{s}^{\Delta})-b_{x}^{\prime
}(s,X_{s}^{t,x})|I_{\{|\hat{X}_{s}^{\Delta}|\leq \varepsilon,|X_{s}^{t,x}|\leq
N\}}+2L_{1}I_{\{|X_{s}^{t,x}|>N\}}+2L_{1}I_{\{|\hat{X}_{s}^{\Delta
}|>\varepsilon \}}\\
\leq \omega_{N}(\varepsilon)+2L_{1}(|X_{s}^{t,x}|/N+|\hat{X}_{s}^{\Delta
}|/\varepsilon),
\end{array}
\label{e3-12}%
\end{equation}
we obtain by (\ref{e3-4}), (\ref{e3-10}) and (\ref{e3-12}) that%
\[
\mathbb{\hat{E}}[|\tilde{b}(s)|^{p}]\leq C|\Delta|^{p}\left(  |\omega
_{N}(\varepsilon)|^{p}+\frac{1+|x|^{p}}{N^{p}}+\frac{|\Delta|^{p}}%
{\varepsilon^{p}}\right)  ,
\]
where $C>0$ depends on $L_{1}$, $\bar{\sigma}$, $p$ and $T$. Thus%
\[
\underset{\Delta \rightarrow0}{\lim \sup}\frac{1}{|\Delta|^{p}}\int_{t}%
^{T}\mathbb{\hat{E}}[|\tilde{b}(s)|^{p}]ds\leq C\left(  |\omega_{N}%
(\varepsilon)|^{p}+\frac{1+|x|^{p}}{N^{p}}\right)  ,
\]
which implies $\lim_{\Delta \rightarrow0}\frac{1}{|\Delta|^{p}}\int_{t}%
^{T}\mathbb{\hat{E}}[|\tilde{b}(s)|^{p}]ds=0$ by letting $\varepsilon
\downarrow0$ first and then $N\rightarrow \infty$. Similarly, we can obtain%
\[
\lim_{\Delta \rightarrow0}\frac{1}{|\Delta|^{p}}\int_{t}^{T}\mathbb{\hat{E}%
}[|\tilde{h}(s)|^{p}+|\tilde{\sigma}(s)|^{p}]ds=0,
\]
which implies the desired result.
\end{proof}

\begin{theorem}
\label{th3-4}Suppose that (A1)-(A3) hold. Then, for each $(t,x)\in
\lbrack0,T)\times \mathbb{R}$, we have%
\begin{equation}
\partial_{x+}u(t,x)=\sup_{P\in \mathcal{P}_{t,x}}E_{P}\left[  \varphi^{\prime
}(X_{T}^{t,x})\hat{X}_{T}^{t,x}\Gamma_{T}^{t,x}+\int_{t}^{T}f_{x}^{\prime
}(s)\hat{X}_{s}^{t,x}\Gamma_{s}^{t,x}ds+\int_{t}^{T}g_{x}^{\prime}(s)\hat
{X}_{s}^{t,x}\Gamma_{s}^{t,x}d\langle B\rangle_{s}\right]  , \label{e3-13}%
\end{equation}%
\begin{equation}
\partial_{x-}u(t,x)=\inf_{P\in \mathcal{P}_{t,x}}E_{P}\left[  \varphi^{\prime
}(X_{T}^{t,x})\hat{X}_{T}^{t,x}\Gamma_{T}^{t,x}+\int_{t}^{T}f_{x}^{\prime
}(s)\hat{X}_{s}^{t,x}\Gamma_{s}^{t,x}ds+\int_{t}^{T}g_{x}^{\prime}(s)\hat
{X}_{s}^{t,x}\Gamma_{s}^{t,x}d\langle B\rangle_{s}\right]  , \label{e3-14}%
\end{equation}
where $(\hat{X}_{s}^{t,x})_{s\in \lbrack t,T]}$ satisfies (\ref{e3-9}),
$(\Gamma_{s}^{t,x})_{s\in \lbrack t,T]}$ satisfies the following $G$-SDE:%
\begin{equation}
d\Gamma_{s}^{t,x}=f_{y}^{\prime}(s)\Gamma_{s}^{t,x}ds+g_{y}^{\prime}%
(s)\Gamma_{s}^{t,x}d\langle B\rangle_{s}+g_{z}^{\prime}(s)\Gamma_{s}%
^{t,x}dB_{s},\text{ }\Gamma_{t}^{t,x}=1, \label{new-e3-14}%
\end{equation}
$g_{x}^{\prime}(s)=g_{x}^{\prime}(s,X_{s}^{t,x},Y_{s}^{t,x},Z_{s}^{t,x})$,
similar for $g_{y}^{\prime}(s)$, $g_{z}^{\prime}(s)$, $f_{x}^{\prime}(s)$ and
$f_{y}^{\prime}(s)$.
\end{theorem}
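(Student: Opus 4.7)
My plan is to adapt the duality argument of Theorem \ref{th2-4}, but since we have no a priori $C^{1,2}$ regularity for $u$, we must compute the one-sided derivatives $\partial_{x\pm}u$ separately. I focus on the formula (\ref{e3-13}) for $\partial_{x+}u$; the formula (\ref{e3-14}) for $\partial_{x-}u$ follows by the same scheme with signs flipped.

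For fixed $\Delta\in\mathbb{R}$, I would set $\hat X^\Delta_s = X^{t,x+\Delta}_s - X^{t,x}_s$, $\hat Y^\Delta_s = Y^{t,x+\Delta}_s - Y^{t,x}_s$, $\hat Z^\Delta_s = Z^{t,x+\Delta}_s - Z^{t,x}_s$, and subtract the two $G$-BSDEs. Taylor-expanding $f$ and $g$ around $(X^{t,x}, Y^{t,x}, Z^{t,x})$ produces a linear $G$-BSDE for $\hat Y^\Delta$ whose coefficients are $f_x'(s), f_y'(s), g_x'(s), g_y'(s), g_z'(s)$ plus quadratic Taylor remainders $I^{\Delta,f}, I^{\Delta,g}$. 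For any $P\in\mathcal{P}_{t,x}$ we have $K^{t,x}\equiv 0$ $P$-a.s.\ while $K^{t,x+\Delta}$ is $P$-a.s.\ non-increasing. Applying It\^o's formula to $\hat Y^\Delta_s \Gamma^{t,x}_s$ under $P$ on $[t,T]$, the $f_y', g_y', g_z'$ terms cancel against the dynamics of $\Gamma^{t,x}$ given by (\ref{new-e3-14}); the $dB$-integral vanishes in $E_P$-expectation via the $L^p$-estimates of (\ref{e3-6}) and Proposition \ref{pro3-1}. Using $\Gamma^{t,x}_s>0$ and $dK^{t,x+\Delta}_s\leq 0$, the $dK^{t,x+\Delta}$-term has nonnegative $P$-expectation and can be dropped, yielding
\begin{equation*}
\hat Y^\Delta_t \geq E_P\Big[\varphi'(X^{t,x}_T)\hat X^\Delta_T \Gamma^{t,x}_T + \int_t^T f_x'(s)\hat X^\Delta_s \Gamma^{t,x}_s\,ds + \int_t^T g_x'(s)\hat X^\Delta_s \Gamma^{t,x}_s\,d\langle B\rangle_s\Big] - R(\Delta),
\end{equation*}
where by (\ref{e3-5})--(\ref{e3-6}) and the smoothness in (A3), $R(\Delta)/\Delta\to 0$. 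Dividing by $\Delta>0$ and letting $\Delta\downarrow 0$, Lemma \ref{le3-3} identifies $\hat X^\Delta/\Delta\to\hat X^{t,x}$ in $L^p(\mathbb{\hat E})$, so $\partial_{x+}u(t,x)\geq E_P[\cdot]$ for every $P\in\mathcal{P}_{t,x}$; taking the supremum gives the $\geq$ half of (\ref{e3-13}).

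For the reverse inequality I would pick, for each $\Delta>0$, a measure $P^\Delta\in\mathcal{P}_{t,x+\Delta}$ that approximates the right-hand side of (\ref{e3-13}) at $x+\Delta$. Under $P^\Delta$ we have $K^{t,x+\Delta}\equiv 0$ while $K^{t,x}$ is non-increasing, so linearizing instead around $(X^{t,x+\Delta}, Y^{t,x+\Delta}, Z^{t,x+\Delta})$ and applying It\^o to $\hat Y^\Delta_s \tilde\Gamma^\Delta_s$ under $P^\Delta$ (where $\tilde\Gamma^\Delta$ solves (\ref{new-e3-14}) along the $(x+\Delta)$-trajectory) reverses the one-sided bound:
\begin{equation*}
\hat Y^\Delta_t \leq E_{P^\Delta}\Big[\varphi'(X^{t,x+\Delta}_T)\hat X^\Delta_T \tilde\Gamma^\Delta_T + \int_t^T \tilde f_x'(s)\hat X^\Delta_s \tilde\Gamma^\Delta_s\,ds + \int_t^T \tilde g_x'(s)\hat X^\Delta_s \tilde\Gamma^\Delta_s\,d\langle B\rangle_s\Big] + o(\Delta).
\end{equation*}
By weak compactness of $\mathcal{P}$, any sequence $\Delta_n\downarrow 0$ admits a subsequence along which $P^{\Delta_n}$ converges weakly to some $P^*\in\mathcal{P}$. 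The estimate (\ref{e3-7}) gives $E_{P^{\Delta_n}}[|K^{t,x}_T|^p]\to 0$; together with the fact that $K^{t,x}_T\in L_G^p(\Omega_T)$ admits a quasi-continuous version, a truncation and approximation argument then yields $E_{P^*}[|K^{t,x}_T|]=0$, i.e.\ $P^*\in\mathcal{P}_{t,x}$. Since $X^{t,x+\Delta_n}, Y^{t,x+\Delta_n}, Z^{t,x+\Delta_n}, \tilde\Gamma^{\Delta_n}$ and $\hat X^{\Delta_n}/\Delta_n$ converge to $X^{t,x}, Y^{t,x}, Z^{t,x}, \Gamma^{t,x}, \hat X^{t,x}$ in $L^p(\mathbb{\hat E})$ (using (\ref{e3-4})--(\ref{e3-5}), Lemma \ref{le3-3}, and standard stability of the linear SDE for $\Gamma$), and $\mathbb{\hat E}$ dominates all $E_{P^{\Delta_n}}$ and $E_{P^*}$, dividing the previous display by $\Delta_n$ and passing to the limit gives $\partial_{x+}u(t,x)\leq E_{P^*}[\cdot]\leq V(t,x)$, completing (\ref{e3-13}). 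The proof of (\ref{e3-14}) is obtained by choosing $\Delta<0$ and reversing the direction of the one-sided bounds throughout.

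The main technical obstacle will be the weak-compactness step: transferring information about $P^{\Delta_n}\in\mathcal{P}_{t,x+\Delta_n}$ to the claim $P^*\in\mathcal{P}_{t,x}$. The estimate (\ref{e3-7}) provides an $L^p$-control of $K^{t,x}_T$ under $P^{\Delta_n}$ that vanishes with $\Delta_n$, but turning this into $E_{P^*}[|K^{t,x}_T|]=0$ requires handling the failure of continuity of $K^{t,x}_T$ on $\Omega_T$; I plan to do this via truncation and quasi-continuous approximation in $L_G^p(\Omega_T)$, exploiting the fact that the uniform $\mathbb{\hat E}$-bounds on the approximation errors hold for every $P\in\mathcal{P}$ simultaneously and thus transfer to the weak limit.
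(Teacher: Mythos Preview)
Your proposal is correct and follows the same duality/linearization scheme as the paper: linearize the difference BSDE, apply It\^o's formula with the adjoint process $\Gamma$, exploit the sign of $dK$ for one-sided bounds, and use weak compactness of $\mathcal{P}$ to close the upper bound. The one substantive difference is that for the upper bound you linearize around the $(x+\Delta)$-trajectory and introduce a $\Delta$-dependent adjoint $\tilde\Gamma^\Delta$, whereas the paper linearizes around the $x$-trajectory and uses the \emph{same} $\Gamma^{t,x}$ for both bounds. The paper's choice is more economical: the leading term
\[
\varphi'(X^{t,x}_T)\hat X^{t,x}_T\Gamma^{t,x}_T+\int_t^T f_x'(s)\hat X^{t,x}_s\Gamma^{t,x}_s\,ds+\int_t^T g_x'(s)\hat X^{t,x}_s\Gamma^{t,x}_s\,d\langle B\rangle_s
\]
is then a fixed element of $L^1_G(\Omega_T)$, so $E_{P^{\Delta_i}}[\cdot]\to E_{P^*}[\cdot]$ follows directly from weak convergence. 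Your route works but requires an additional stability estimate $\tilde\Gamma^{\Delta_n}\to\Gamma^{t,x}$ in $L^p(\hat{\mathbb{E}})$; since the coefficients of $\tilde\Gamma^{\Delta_n}$ depend on $Z^{t,x+\Delta_n}$ through $g_y',g_z'$ (only continuous under (A3)), this needs the modulus-of-continuity device from Lemma~\ref{le3-3} together with the $H^{2,p}_G(0,T;\langle B\rangle)$-convergence of $Z^{t,x+\Delta_n}$ from (\ref{e2-8}). For the step $P^*\in\mathcal{P}_{t,x}$, the paper uses $\hat{\mathbb{E}}[|K^{t,x+\Delta}_T-K^{t,x}_T|]\to 0$ (from Proposition~\ref{pro2-1} and (\ref{e3-5})) combined with $E_{P^{\Delta_i}}[K^{t,x}_T]\to E_{P^*}[K^{t,x}_T]$ for $K^{t,x}_T\in L^1_G(\Omega_T)$; your direct use of (\ref{e3-7}) giving $E_{P^{\Delta_n}}[|K^{t,x}_T|]\to 0$ is an equally valid and arguably shorter alternative, and the quasi-continuous approximation argument you sketch is exactly what underlies the paper's step as well.
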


\begin{proof}
Set $\hat{X}_{s}^{\Delta}=$ $X_{s}^{t,x+\Delta}-X_{s}^{t,x}$, $\hat{Y}%
_{s}^{\Delta}=Y_{s}^{t,x+\Delta}-Y_{s}^{t,x}$, $\hat{Z}_{s}^{\Delta}%
=Z_{s}^{t,x+\Delta}-Z_{s}^{t,x}$ for $\Delta>0$ and $s\in \lbrack t,T]$. For
each $P\in \mathcal{P}_{t,x}$, we have%
\[%
\begin{array}
[c]{rl}%
\hat{Y}_{s}^{\Delta}= & \varphi^{\prime}(X_{T}^{t,x})\hat{X}_{T}^{t,x}%
\Delta+\tilde{\varphi}(T)+\int_{s}^{T}[f_{x}^{\prime}(r)\hat{X}_{r}%
^{t,x}\Delta+f_{y}^{\prime}(r)\hat{Y}_{r}^{\Delta}+\tilde{f}(r)]dr\\
& +\int_{s}^{T}[g_{x}^{\prime}(r)\hat{X}_{r}^{t,x}\Delta+g_{y}^{\prime}%
(r)\hat{Y}_{r}^{\Delta}+g_{z}^{\prime}(r)\hat{Z}_{r}^{\Delta}+\tilde
{g}(r)]d\langle B\rangle_{r}-\int_{s}^{T}\hat{Z}_{r}^{\Delta}dB_{r}-\int
_{s}^{T}dK_{r}^{t,x+\Delta},
\end{array}
\]
where $\tilde{g}(r)=g(r,X_{r}^{t,x+\Delta},Y_{r}^{t,x+\Delta},Z_{r}%
^{t,x+\Delta})-g(r,X_{r}^{t,x},Y_{r}^{t,x},Z_{r}^{t,x})-g_{x}^{\prime}%
(r)\hat{X}_{r}^{t,x}\Delta-g_{y}^{\prime}(r)\hat{Y}_{r}^{\Delta}-g_{z}%
^{\prime}(r)\hat{Z}_{r}^{\Delta}$, similar for $\tilde{\varphi}(T)$ and
$\tilde{f}(r)$. Applying It\^{o}'s formula to $\hat{Y}_{s}^{\Delta}\Gamma
_{s}^{t,x}$ on $[t,T]$ under $P$, we obtain%
\begin{equation}%
\begin{array}
[c]{rl}%
\Delta^{-1}\hat{Y}_{t}^{\Delta}= & E_{P}\left[  \varphi^{\prime}(X_{T}%
^{t,x})\hat{X}_{T}^{t,x}\Gamma_{T}^{t,x}+\int_{t}^{T}f_{x}^{\prime}(s)\hat
{X}_{s}^{t,x}\Gamma_{s}^{t,x}ds+\int_{t}^{T}g_{x}^{\prime}(s)\hat{X}_{s}%
^{t,x}\Gamma_{s}^{t,x}d\langle B\rangle_{s}\right] \\
& +\Delta^{-1}E_{P}\left[  \tilde{\varphi}(T)\Gamma_{T}^{t,x}+\int_{t}%
^{T}\tilde{f}(s)\Gamma_{s}^{t,x}ds+\int_{t}^{T}\tilde{g}(s)\Gamma_{s}%
^{t,x}d\langle B\rangle_{s}-\int_{t}^{T}\Gamma_{s}^{t,x}dK_{s}^{t,x+\Delta
}\right]  .
\end{array}
\label{e3-15}%
\end{equation}
Noting that $\tilde{\varphi}(T)=\varphi^{\prime}(X_{T}^{t,x})(\hat{X}%
_{T}^{\Delta}-\hat{X}_{T}^{t,x}\Delta)+\hat{X}_{T}^{\Delta}\int_{0}%
^{1}[\varphi^{\prime}(X_{T}^{t,x}+\theta \hat{X}_{T}^{\Delta})-\varphi^{\prime
}(X_{T}^{t,x})]d\theta$, similar for $\tilde{f}(s)$ and $\tilde{g}(s)$, by
(\ref{e3-4}), (\ref{e3-5}), (\ref{e3-6}), (\ref{e3-8}) and using the method in
(\ref{e3-12}), we get%
\begin{equation}
\lim_{\Delta \downarrow0}\Delta^{-1}E_{P}\left[  \tilde{\varphi}(T)\Gamma
_{T}^{t,x}+\int_{t}^{T}\tilde{f}(s)\Gamma_{s}^{t,x}ds+\int_{t}^{T}\tilde
{g}(s)\Gamma_{s}^{t,x}d\langle B\rangle_{s}\right]  =0. \label{e3-16}%
\end{equation}
Since $\Delta>0$, $\Gamma_{s}^{t,x}\geq0$ and $dK_{s}^{t,x+\Delta}\leq0$, we
deduce by (\ref{e3-15}) and (\ref{e3-16}) that%
\begin{equation}
\underset{\Delta \downarrow0}{\lim \inf}\frac{\hat{Y}_{t}^{\Delta}}{\Delta}%
\geq \sup_{P\in \mathcal{P}_{t,x}}E_{P}\left[  \varphi^{\prime}(X_{T}^{t,x}%
)\hat{X}_{T}^{t,x}\Gamma_{T}^{t,x}+\int_{t}^{T}f_{x}^{\prime}(s)\hat{X}%
_{s}^{t,x}\Gamma_{s}^{t,x}ds+\int_{t}^{T}g_{x}^{\prime}(s)\hat{X}_{s}%
^{t,x}\Gamma_{s}^{t,x}d\langle B\rangle_{s}\right]  . \label{e3-17}%
\end{equation}

For each $P^{\Delta}\in \mathcal{P}_{t,x+\Delta}$ for $\Delta>0$, similar to
(\ref{e3-15}), we have%
\begin{equation}%
\begin{array}
[c]{rl}%
\Delta^{-1}\hat{Y}_{t}^{\Delta}= & E_{P^{\Delta}}\left[  \varphi^{\prime
}(X_{T}^{t,x})\hat{X}_{T}^{t,x}\Gamma_{T}^{t,x}+\int_{t}^{T}f_{x}^{\prime
}(s)\hat{X}_{s}^{t,x}\Gamma_{s}^{t,x}ds+\int_{t}^{T}g_{x}^{\prime}(s)\hat
{X}_{s}^{t,x}\Gamma_{s}^{t,x}d\langle B\rangle_{s}\right] \\
& +\Delta^{-1}E_{P^{\Delta}}\left[  \tilde{\varphi}(T)\Gamma_{T}^{t,x}%
+\int_{t}^{T}\tilde{f}(s)\Gamma_{s}^{t,x}ds+\int_{t}^{T}\tilde{g}(s)\Gamma
_{s}^{t,x}d\langle B\rangle_{s}+\int_{t}^{T}\Gamma_{s}^{t,x}dK_{s}%
^{t,x}\right]  .
\end{array}
\label{e3-18}%
\end{equation}
Similar to (\ref{e3-16}), we get%
\begin{equation}
\lim_{\Delta \downarrow0}\Delta^{-1}E_{P^{\Delta}}\left[  \tilde{\varphi
}(T)\Gamma_{T}^{t,x}+\int_{t}^{T}\tilde{f}(s)\Gamma_{s}^{t,x}ds+\int_{t}%
^{T}\tilde{g}(s)\Gamma_{s}^{t,x}d\langle B\rangle_{s}\right]  =0.
\label{e3-19}%
\end{equation}
Since $\mathcal{P}$ is weakly compact, for any sequence $\Delta_{j}%
\downarrow0$, we can find a subsequence $\Delta_{i}\downarrow0$ such that
$P^{\Delta_{i}}$ converges weakly to $P^{\ast}\in \mathcal{P}$. By Proposition
\ref{pro2-1} and (\ref{e3-5}), we have $\mathbb{\hat{E}}\left[  |K_{T}%
^{t,x+\Delta}-K_{T}^{t,x}|\right]  \rightarrow0$ as $\Delta \downarrow0$. Due
to%
\[
|E_{P^{\ast}}[K_{T}^{t,x}]|=|E_{P^{\ast}}[K_{T}^{t,x}]-E_{P^{\Delta_{i}}%
}[K_{T}^{t,x+\Delta_{i}}]|\leq|E_{P^{\ast}}[K_{T}^{t,x}]-E_{P^{\Delta_{i}}%
}[K_{T}^{t,x}]|+\mathbb{\hat{E}}\left[  |K_{T}^{t,x+\Delta_{i}}-K_{T}%
^{t,x}|\right]
\]
and $E_{P^{\Delta_{i}}}[K_{T}^{t,x}]\rightarrow E_{P^{\ast}}[K_{T}^{t,x}]$ as
$\Delta_{i}\downarrow0$, we get $E_{P^{\ast}}[K_{T}^{t,x}]=0$, which implies
$P^{\ast}\in \mathcal{P}_{t,x}$. Noting that $\int_{t}^{T}\Gamma_{s}%
^{t,x}dK_{s}^{t,x}\leq0$ and%
\[
\varphi^{\prime}(X_{T}^{t,x})\hat{X}_{T}^{t,x}\Gamma_{T}^{t,x}+\int_{t}%
^{T}f_{x}^{\prime}(s)\hat{X}_{s}^{t,x}\Gamma_{s}^{t,x}ds+\int_{t}^{T}%
g_{x}^{\prime}(s)\hat{X}_{s}^{t,x}\Gamma_{s}^{t,x}d\langle B\rangle_{s}\in
L_{G}^{1}(\Omega_{T}),
\]
we deduce by (\ref{e3-18}) and (\ref{e3-19}) that%
\begin{equation}
\underset{\Delta \downarrow0}{\lim \sup}\frac{\hat{Y}_{t}^{\Delta}}{\Delta}%
\leq \sup_{P\in \mathcal{P}_{t,x}}E_{P}\left[  \varphi^{\prime}(X_{T}^{t,x}%
)\hat{X}_{T}^{t,x}\Gamma_{T}^{t,x}+\int_{t}^{T}f_{x}^{\prime}(s)\hat{X}%
_{s}^{t,x}\Gamma_{s}^{t,x}ds+\int_{t}^{T}g_{x}^{\prime}(s)\hat{X}_{s}%
^{t,x}\Gamma_{s}^{t,x}d\langle B\rangle_{s}\right]  . \label{e3-20}%
\end{equation}
Thus we obtain (\ref{e3-13}) by (\ref{e3-17}) and (\ref{e3-20}). Similarly, we
can get (\ref{e3-14}).
\end{proof}

Now we study $\partial_{t}u(t,x)$. For each $(t,x)\in(0,T)\times \mathbb{R}$
and $|\Delta|<t\wedge(T-t)$, noting that%
\[
\sqrt{\frac{T-t}{T-t-\Delta}}\left(  B_{\frac{T-t-\Delta}{T-t}(s-t)+t+\Delta
}-B_{t+\Delta}\right)  _{s\in \lbrack t,T]}%
\]
and $(B_{s}-B_{t})_{s\in \lbrack t,T]}$ have the same distribution, we obtain
$u(t+\Delta,x)=\bar{Y}_{t}^{t,x,\Delta}$, where $(\bar{X}^{t,x,\Delta}$,
$\bar{Y}^{t,x,\Delta}$, $\bar{Z}^{t,x,\Delta}$, $\bar{K}^{t,x,\Delta})$
satisfies the following $G$-FBSDE:%
\[%
\begin{array}
[c]{rl}%
\bar{X}_{s}^{t,x,\Delta}= & x+\frac{T-t-\Delta}{T-t}\left[  \int_{t}%
^{s}b(r+\frac{T-r}{T-t}\Delta,\bar{X}_{r}^{t,x,\Delta})dr+\int_{t}%
^{s}h(r+\frac{T-r}{T-t}\Delta,\bar{X}_{r}^{t,x,\Delta})d\langle B\rangle
_{r}\right] \\
& +\sqrt{\frac{T-t-\Delta}{T-t}}\int_{t}^{s}\sigma(r+\frac{T-r}{T-t}%
\Delta,\bar{X}_{r}^{t,x,\Delta})dB_{r},
\end{array}
\]%
\[%
\begin{array}
[c]{rl}%
\bar{Y}_{s}^{t,x,\Delta}= & \varphi(\bar{X}_{T}^{t,x,\Delta})+\frac
{T-t-\Delta}{T-t}\int_{s}^{T}g(r+\frac{T-r}{T-t}\Delta,\bar{X}_{r}%
^{t,x,\Delta},\bar{Y}_{r}^{t,x,\Delta},\sqrt{\frac{T-t}{T-t-\Delta}}\bar
{Z}_{r}^{t,x,\Delta})d\langle B\rangle_{r}\\
& +\frac{T-t-\Delta}{T-t}\int_{s}^{T}f(r+\frac{T-r}{T-t}\Delta,\bar{X}%
_{r}^{t,x,\Delta},\bar{Y}_{r}^{t,x,\Delta})dr-\int_{s}^{T}\bar{Z}%
_{r}^{t,x,\Delta}dB_{r}-(\bar{K}_{T}^{t,x,\Delta}-\bar{K}_{s}^{t,x,\Delta}).
\end{array}
\]

In order to obtain $\partial_{t}u(t,x)$, we need the following assumption.

\begin{description}
\item[(A4)] $b_{t}^{\prime}$, $h_{t}^{\prime}$, $\sigma_{t}^{\prime}$,
$f_{t}^{\prime}$, $g_{t}^{\prime}$ are continuous in $(s,x,y,z)$, and there
exist a constant $L_{2}>0$ and a positive integer $m_{1}$ such that for any
$s\in \lbrack0,T]$, $x$, $y$, $z\in \mathbb{R}$,%
\[
|b_{t}^{\prime}(s,x)|+|h_{t}^{\prime}(s,x)|+|\sigma_{t}^{\prime}%
(s,x)|+|f_{t}^{\prime}(s,x,y)|+|g_{t}^{\prime}(s,x,y,z)|\leq L_{2}%
(1+|x|^{m_{1}}+|y|^{m_{1}}+|z|^{2}).
\]

\end{description}

\begin{lemma}
\label{le3-5}Suppose that (A1)-(A4) hold. Then, for each $(t,x)\in
(0,T)\times \mathbb{R}$ and $p\geq2$, we have%
\[
\lim_{\Delta \rightarrow0}\sup_{s\in \lbrack t,T]}\mathbb{\hat{E}}\left[
\left \vert \frac{\bar{X}_{s}^{t,x,\Delta}-X_{s}^{t,x}}{\Delta}-\bar{X}%
_{s}^{t,x}\right \vert ^{p}\right]  =0,
\]
where $(\bar{X}_{s}^{t,x})_{s\in \lbrack t,T]}$ is the solution of the
following $G$-SDE:%
\begin{equation}%
\begin{array}
[c]{rl}%
\bar{X}_{s}^{t,x}= & \int_{t}^{s}\left[  b_{x}^{\prime}(r,X_{r}^{t,x})\bar
{X}_{r}^{t,x}+\frac{T-r}{T-t}b_{t}^{\prime}(r,X_{r}^{t,x})-\frac{1}%
{T-t}b(r,X_{r}^{t,x})\right]  dr\\
& +\int_{t}^{s}\left[  h_{x}^{\prime}(r,X_{r}^{t,x})\bar{X}_{r}^{t,x}%
+\frac{T-r}{T-t}h_{t}^{\prime}(r,X_{r}^{t,x})-\frac{1}{T-t}h(r,X_{r}%
^{t,x})\right]  d\langle B\rangle_{r}\\
& +\int_{t}^{s}\left[  \sigma_{x}^{\prime}(r,X_{r}^{t,x})\bar{X}_{r}%
^{t,x}+\frac{T-r}{T-t}\sigma_{t}^{\prime}(r,X_{r}^{t,x})-\frac{1}%
{2(T-t)}\sigma(r,X_{r}^{t,x})\right]  dB_{r}.
\end{array}
\label{e3-21}%
\end{equation}

\end{lemma}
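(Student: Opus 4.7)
The plan is to mimic the argument of Lemma \ref{le3-3}, replacing the purely spatial perturbation considered there by the combined time-shift and rescaling perturbation that defines $\bar{X}^{t,x,\Delta}$. Set
$$\tilde{X}_s^{\Delta} := \bar{X}_s^{t,x,\Delta} - X_s^{t,x} - \Delta\, \bar{X}_s^{t,x}, \qquad s \in [t,T],$$
so that the claim is equivalent to $\sup_{s\in[t,T]}\mathbb{\hat{E}}[|\tilde{X}_s^{\Delta}|^{p}]=o(|\Delta|^{p})$. As a prerequisite, I would first establish the a priori estimate $\sup_{s\in[t,T]}\mathbb{\hat{E}}[|\bar{X}_s^{t,x,\Delta}-X_s^{t,x}|^{p}]\le C|\Delta|^{p}$ by subtracting the defining SDEs, splitting each coefficient difference into a ``frozen-state" piece of order $\Delta$ (controlled via (A4), i.e.\ the polynomial growth of $b_t',h_t',\sigma_t'$, together with the moment bound (\ref{e3-4}) for $X^{t,x}$) and a Lipschitz piece in $\bar{X}_r^{t,x,\Delta}-X_r^{t,x}$, and applying standard $G$-SDE estimates as in Proposition \ref{pro3-1}.

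The main step is a careful Taylor expansion. For the drift, using $\frac{T-t-\Delta}{T-t}=1-\frac{\Delta}{T-t}$ and expanding $b(r+\frac{T-r}{T-t}\Delta,\bar{X}_r^{t,x,\Delta})$ simultaneously in time around $r$ (via $b_t'$) and in state around $X_r^{t,x}$ (via $b_x'$), one gets
$$\tfrac{T-t-\Delta}{T-t}\,b\bigl(r+\tfrac{T-r}{T-t}\Delta,\bar{X}_r^{t,x,\Delta}\bigr)-b(r,X_r^{t,x}) = b_x'(r,X_r^{t,x})\bigl(\bar{X}_r^{t,x,\Delta}-X_r^{t,x}\bigr)+\Delta\Bigl[\tfrac{T-r}{T-t}b_t'(r,X_r^{t,x})-\tfrac{1}{T-t}b(r,X_r^{t,x})\Bigr]+\rho_r^{b},$$
where $\rho_r^{b}$ collects quadratic-in-$(\bar{X}^{\Delta}-X)$ terms, quadratic-in-$\Delta$ terms, cross terms of order $\Delta\cdot(\bar{X}^{\Delta}-X)$, and continuity-modulus remainders of $b_x'$ and $b_t'$. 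Performing the analogous expansion for $h$, and for $\sigma$ (where the prefactor $\sqrt{(T-t-\Delta)/(T-t)}=1-\frac{\Delta}{2(T-t)}+O(\Delta^{2})$ produces precisely the $-\frac{1}{2(T-t)}\sigma(r,X_r^{t,x})$ term in (\ref{e3-21})), and then subtracting $\Delta$ times (\ref{e3-21}), one finds that $\tilde{X}^{\Delta}$ solves the linear $G$-SDE
$$d\tilde{X}_s^{\Delta}=\bigl[b_x'(s,X_s^{t,x})\tilde{X}_s^{\Delta}+\rho_s^{b}\bigr]ds+\bigl[h_x'(s,X_s^{t,x})\tilde{X}_s^{\Delta}+\rho_s^{h}\bigr]d\langle B\rangle_s+\bigl[\sigma_x'(s,X_s^{t,x})\tilde{X}_s^{\Delta}+\rho_s^{\sigma}\bigr]dB_s,\quad \tilde{X}_t^{\Delta}=0.$$

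To conclude, I would show $\int_t^{T}\mathbb{\hat{E}}[|\rho_s^{\bullet}|^{p}]\,ds=o(|\Delta|^{p})$ for $\bullet\in\{b,h,\sigma\}$. The quadratic remainders are handled using the Step 1 estimate together with the polynomial moment bounds on $X^{t,x}$ and on the linear $G$-SDE $\bar{X}^{t,x}$; the continuity-modulus remainders are handled by the localization trick already used in (\ref{e3-11})--(\ref{e3-12}), now applied to all of $b_x',h_x',\sigma_x',b_t',h_t',\sigma_t'$ via assumptions (A3) and (A4). Once this is in place, standard $G$-SDE estimates applied to the linear equation for $\tilde{X}^{\Delta}$ yield $\sup_{s\in[t,T]}\mathbb{\hat{E}}[|\tilde{X}_s^{\Delta}|^{p}]=o(|\Delta|^{p})$, which is the assertion.

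The main obstacle will be the bookkeeping of the expansion: three perturbations (the multiplicative factor $(T-t-\Delta)/(T-t)$, the square-root factor in front of $dB_r$, and the time shift $r+\frac{T-r}{T-t}\Delta$ inside every coefficient) act simultaneously, and one must verify that their first-order contributions combine exactly into the three coefficients of $\bar{X}^{t,x}$ appearing in (\ref{e3-21}), leaving only genuinely second-order or modulus-of-continuity remainders. Once the linearization is organized correctly, the quantitative work reduces to the same type of localized modulus-of-continuity estimate carried out in the proof of Lemma \ref{le3-3}, combined with the polynomial growth condition in (A4) to control the extra $b_t',h_t',\sigma_t'$ terms.
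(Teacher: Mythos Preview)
Your proposal is correct and follows exactly the route the paper intends: the paper's own proof consists of the single sentence ``The proof is similar to Lemma \ref{le3-3}, we omit it,'' and what you have written is precisely the adaptation of that argument to the time-perturbed process, with the correct identification of the three simultaneous first-order contributions (the factor $1-\frac{\Delta}{T-t}$, the square-root expansion $1-\frac{\Delta}{2(T-t)}+O(\Delta^2)$, and the time shift $\frac{T-r}{T-t}\Delta$) that assemble into the coefficients of (\ref{e3-21}). Your outline of the remainder control via the localization device (\ref{e3-11})--(\ref{e3-12}), now applied to $b_x',h_x',\sigma_x',b_t',h_t',\sigma_t'$ using (A3)--(A4), is the right completion.
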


\begin{proof}
The proof is similar to Lemma \ref{le3-3}, we omit it.
\end{proof}

\begin{theorem}
\label{th3-6}Suppose that (A1)-(A4) hold. Then, for each $(t,x)\in
(0,T)\times \mathbb{R}$, we have%
\begin{align*}
\partial_{t+}u(t,x)  &  =\sup_{P\in \mathcal{P}_{t,x}}E_{P}\left[
\varphi^{\prime}(X_{T}^{t,x})\bar{X}_{T}^{t,x}\Gamma_{T}^{t,x}+\int_{t}%
^{T}\left(  f_{x}^{\prime}(s)\bar{X}_{s}^{t,x}+\frac{T-s}{T-t}f_{t}^{\prime
}(s)-\frac{1}{T-t}f(s)\right)  \Gamma_{s}^{t,x}ds\right. \\
&  \left.  \text{ \  \  \  \ }+\int_{t}^{T}\left(  \frac{g_{z}^{\prime}%
(s)Z_{s}^{t,x}}{2(T-t)}+g_{x}^{\prime}(s)\bar{X}_{s}^{t,x}+\frac{T-s}%
{T-t}g_{t}^{\prime}(s)-\frac{1}{T-t}g(s)\right)  \Gamma_{s}^{t,x}d\langle
B\rangle_{s}\right]  ,
\end{align*}%
\begin{align*}
\partial_{t-}u(t,x)  &  =\inf_{P\in \mathcal{P}_{t,x}}E_{P}\left[
\varphi^{\prime}(X_{T}^{t,x})\bar{X}_{T}^{t,x}\Gamma_{T}^{t,x}+\int_{t}%
^{T}\left(  f_{x}^{\prime}(s)\bar{X}_{s}^{t,x}+\frac{T-s}{T-t}f_{t}^{\prime
}(s)-\frac{1}{T-t}f(s)\right)  \Gamma_{s}^{t,x}ds\right. \\
&  \left.  \text{ \  \  \  \ }+\int_{t}^{T}\left(  \frac{g_{z}^{\prime}%
(s)Z_{s}^{t,x}}{2(T-t)}+g_{x}^{\prime}(s)\bar{X}_{s}^{t,x}+\frac{T-s}%
{T-t}g_{t}^{\prime}(s)-\frac{1}{T-t}g(s)\right)  \Gamma_{s}^{t,x}d\langle
B\rangle_{s}\right]  ,
\end{align*}
where $(\bar{X}_{s}^{t,x})_{s\in \lbrack t,T]}$ satisfies (\ref{e3-21}),
$(\Gamma_{s}^{t,x})_{s\in \lbrack t,T]}$ satisfies (\ref{new-e3-14}),
$f_{t}^{\prime}(s)=f_{t}^{\prime}(s,X_{s}^{t,x},Y_{s}^{t,x})$, similar for
$f(s)$, $f_{x}^{\prime}(s)$, $g(s)$, $g_{x}^{\prime}(s)$, $g_{z}^{\prime}(s)$
and $g_{t}^{\prime}(s)$.
\end{theorem}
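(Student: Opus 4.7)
The plan is to mirror the proof of Theorem \ref{th3-4}, but with the time-translation identity $u(t+\Delta,x)=\bar Y_t^{t,x,\Delta}$ in place of the spatial-shift identity $u(t,x+\Delta)=Y_t^{t,x+\Delta}$. Set $\hat X_s^{\Delta}=\bar X_s^{t,x,\Delta}-X_s^{t,x}$, $\hat Y_s^{\Delta}=\bar Y_s^{t,x,\Delta}-Y_s^{t,x}$, $\hat Z_s^{\Delta}=\bar Z_s^{t,x,\Delta}-Z_s^{t,x}$ and $\bar K_t^{t,x,\Delta}=0$. First I would repeat the argument of Lemma \ref{le3-5} to upgrade the $L^p$-convergence of $\Delta^{-1}(\bar X^{t,x,\Delta}-X^{t,x})$ to $\bar X^{t,x}$; then the prior estimates of Proposition \ref{pro2-1} together with Theorem \ref{th1-1} give $\hat{\mathbb{E}}[\sup_s|\hat Y_s^\Delta|^p]=O(|\Delta|^p)$ and, copying the proof of Proposition \ref{pro2-3}, the bounds
\[
E_P\!\left[\Big(\int_t^T|\hat Z_s^\Delta|^2 d\langle B\rangle_s\Big)^{p/2}+|\bar K_T^{t,x,\Delta}|^p\right]\leq C|\Delta|^p\quad\text{for }P\in\mathcal{P}_{t,x},
\]
and the symmetric bound for $P^\Delta$ in $\mathcal{P}_{t,x}^{\Delta}:=\{P\in\mathcal{P}:E_P[\bar K_T^{t,x,\Delta}]=0\}$ in which $K_T^{t,x}$ appears.

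The key step is the linearization. Writing $\alpha_\Delta(r)=r+\tfrac{T-r}{T-t}\Delta$ and expanding to first order in $\Delta$ the coefficients appearing in the FBSDE for $(\bar X^{t,x,\Delta},\bar Y^{t,x,\Delta},\bar Z^{t,x,\Delta})$, the factor $\tfrac{T-t-\Delta}{T-t}=1-\tfrac{\Delta}{T-t}$ in the $ds$ and $d\langle B\rangle_r$ terms produces contributions $-\tfrac{1}{T-t}b(r,X_r^{t,x})$, $-\tfrac{1}{T-t}h(r,X_r^{t,x})$, $-\tfrac{1}{T-t}f(s)$, $-\tfrac{1}{T-t}g(s)$, the expansion $\alpha_\Delta(r)=r+\tfrac{T-r}{T-t}\Delta$ produces $\tfrac{T-r}{T-t}b_t'$, $\tfrac{T-r}{T-t}h_t'$, $\tfrac{T-r}{T-t}\sigma_t'$, $\tfrac{T-s}{T-t}f_t'$, $\tfrac{T-s}{T-t}g_t'$, the factor $\sqrt{\tfrac{T-t-\Delta}{T-t}}=1-\tfrac{\Delta}{2(T-t)}+O(\Delta^2)$ in the $dB_r$ term yields the $-\tfrac{1}{2(T-t)}\sigma(r,X_r^{t,x})$ term in \eqref{e3-21}, and the factor $\sqrt{\tfrac{T-t}{T-t-\Delta}}=1+\tfrac{\Delta}{2(T-t)}+O(\Delta^2)$ multiplying $\bar Z^{t,x,\Delta}$ inside $g$ produces, after combination with $g_z'(s)\hat Z_s^\Delta$, the crucial extra term $\tfrac{g_z'(s)Z_s^{t,x}}{2(T-t)}$ in the statement.

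With this linearization, the BSDE for $\hat Y^\Delta$ can be written as
\[
\begin{array}{rl}
\hat Y_s^\Delta=&\varphi'(X_T^{t,x})\hat X_T^\Delta+\tilde\varphi(T)+\displaystyle\int_s^T\!\big[f_y'(r)\hat Y_r^\Delta+\Delta\,A_r+\tilde f(r)\big]dr\\
&+\displaystyle\int_s^T\!\big[g_y'(r)\hat Y_r^\Delta+g_z'(r)\hat Z_r^\Delta+\Delta\,C_r+\tilde g(r)\big]d\langle B\rangle_r-\int_s^T\hat Z_r^\Delta dB_r-\int_s^T d\bar K_r^{t,x,\Delta},
\end{array}
\]
where $A_r=f_x'(r)\bar X_r^{t,x}+\tfrac{T-r}{T-t}f_t'(r)-\tfrac{1}{T-t}f(r)$, $C_r$ is the corresponding expression with $\tfrac{g_z'(r)Z_r^{t,x}}{2(T-t)}$ included, and $\tilde\varphi,\tilde f,\tilde g$ are second-order remainders. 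Applying It\^o's formula to $\hat Y_s^\Delta\Gamma_s^{t,x}$ under a fixed $P\in\mathcal{P}_{t,x}$ (so that $\bar K^{t,x,\Delta}$ survives as a nonpositive drift and $K^{t,x}=0$, $P$-a.s.), dividing by $\Delta>0$ and sending $\Delta\downarrow 0$ gives
\[
\underset{\Delta\downarrow 0}{\liminf}\,\frac{\hat Y_t^\Delta}{\Delta}\geq E_P\!\left[\varphi'(X_T^{t,x})\bar X_T^{t,x}\Gamma_T^{t,x}+\int_t^T A_s\Gamma_s^{t,x}ds+\int_t^T C_s\Gamma_s^{t,x}d\langle B\rangle_s\right],
\]
where vanishing of the remainder terms $\Delta^{-1}E_P[\tilde\varphi\Gamma_T+\int\tilde f\Gamma ds+\int\tilde g\Gamma d\langle B\rangle]$ is handled by the same continuity-of-coefficients argument used in \eqref{e3-12}-\eqref{e3-16}, combined with Lemma \ref{le3-5} and the prior estimates on $\hat Z^\Delta$. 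Taking the supremum over $P\in\mathcal{P}_{t,x}$ yields one half of the formula for $\partial_{t+}u(t,x)$.

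For the reverse inequality I would pick, for each $\Delta>0$, a measure $P^\Delta\in\mathcal{P}_{t,x}^\Delta$, apply It\^o to $\hat Y_s^\Delta\Gamma_s^{t,x}$ under $P^\Delta$ (so that now $\bar K^{t,x,\Delta}=0$, $P^\Delta$-a.s., and $K^{t,x}$ appears as a nonpositive drift), use weak compactness of $\mathcal{P}$ to extract a subsequential weak limit $P^*$, and then verify that $P^*\in\mathcal{P}_{t,x}$ via the estimate $\hat{\mathbb{E}}[|\bar K_T^{t,x,\Delta}-K_T^{t,x}|]\to 0$, which follows from the $L^p$ convergence in Lemma \ref{le3-5}, the linearization of $\hat Y^\Delta$ and standard $G$-BSDE stability. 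This is the step I expect to be the main obstacle: justifying that $\bar K_T^{t,x,\Delta}\to K_T^{t,x}$ in $L^1_G$ requires passing to the limit in the time-rescaled system (not merely in a space-shift as in Theorem \ref{th3-4}), so one must combine the estimate from Proposition \ref{pro3-1} for $\bar X^{t,x,\Delta}-X^{t,x}$ with a $G$-BSDE stability argument applied to the difference of the $\bar Y$- and $Y$-equations after freezing everything except the nondegenerate time-perturbation. Once this is in hand, the $\partial_{t-}u$ formula follows by the symmetric argument with $\Delta<0$, and the two one-sided expressions coincide because both equal $\partial_t u(t,x)$.
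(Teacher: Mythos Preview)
Your approach is correct and is exactly what the paper intends: its own proof reads ``The proof is similar to Theorem \ref{th3-4}, we omit it,'' and your linearization (including the identification of the $\tfrac{g_z'(s)Z_s^{t,x}}{2(T-t)}$ term from the factor $\sqrt{\tfrac{T-t}{T-t-\Delta}}$) together with the It\^o-to-$\hat Y_s^\Delta\Gamma_s^{t,x}$ argument and the weak-compactness step are the right adaptations. One small correction: drop the last clause ``the two one-sided expressions coincide because both equal $\partial_t u(t,x)$''---the theorem does \emph{not} assert $\partial_{t+}u=\partial_{t-}u$, only the separate sup/inf representations, so no such coincidence needs to be (or can be, at this stage) proved.
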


\begin{proof}
The proof is similar to Theorem \ref{th3-4}, we omit it.
\end{proof}

The following theorem gives the condition for $\partial_{x+}u(t,x)=\partial
_{x-}u(t,x)$.

\begin{theorem}
\label{th3-7}Suppose that (A1)-(A4) hold. If $\sigma(t,x)\not =0$ for some
$(t,x)\in(0,T)\times \mathbb{R}$, then $\partial_{x+}u(t,x)=\partial
_{x-}u(t,x)$.
\end{theorem}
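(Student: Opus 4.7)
The plan is to start from the representation formulas in Theorem \ref{th3-4}. Writing $\xi$ for the common random variable on the right-hand sides of (\ref{e3-13}) and (\ref{e3-14}),
\[
\xi = \varphi^{\prime}(X_T^{t,x})\hat{X}_T^{t,x}\Gamma_T^{t,x} + \int_t^T f_x^{\prime}(s)\hat{X}_s^{t,x}\Gamma_s^{t,x}ds + \int_t^T g_x^{\prime}(s)\hat{X}_s^{t,x}\Gamma_s^{t,x}d\langle B\rangle_s,
\]
one immediately has
\[
\partial_{x+}u(t,x) - \partial_{x-}u(t,x) = \sup_{P\in\mathcal{P}_{t,x}} E_P[\xi] - \inf_{P\in\mathcal{P}_{t,x}} E_P[\xi] \geq 0,
\]
so the theorem is equivalent to the assertion that the linear functional $P \mapsto E_P[\xi]$ is constant on the convex, weakly compact set $\mathcal{P}_{t,x}$.

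The first step is to extract the local consequence of $\sigma(t,x) \neq 0$: by continuity of $\sigma$ there exist $\delta > 0$ and $c_0 > 0$ such that $\sigma^2(s,y) \geq c_0$ on $[t, t+\delta] \times [x-\delta, x+\delta]$. This is the only place where the hypothesis enters, and it gives strict non-degeneracy of the driving noise in a neighborhood of the initial point.

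The key step is to try to construct, for each sufficiently small $\Delta \neq 0$, a single measure $P^{\Delta} \in \mathcal{P}_{t,x} \cap \mathcal{P}_{t,x+\Delta}$; that is, a measure achieving simultaneously $E_{P^{\Delta}}[K_T^{t,x}] = 0$ and $E_{P^{\Delta}}[K_T^{t,x+\Delta}] = 0$. If such $P^\Delta$ exist, then repeating the argument in (\ref{e3-15})--(\ref{e3-17}) and (\ref{e3-18})--(\ref{e3-20}) with the common measure $P^\Delta$ replacing the separate choices in $\mathcal{P}_{t,x}$ and $\mathcal{P}_{t,x+\Delta}$ produces matching upper and lower bounds on $\hat{Y}_t^{\Delta}/\Delta$. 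A weak compactness extraction $P^{\Delta} \rightharpoonup P^{\ast} \in \mathcal{P}_{t,x}$ (using the argument after (\ref{e3-19}) that limit points of $\mathcal{P}_{t,x+\Delta}$ lie in $\mathcal{P}_{t,x}$) then yields
\[
\partial_{x+}u(t,x) = \lim_{\Delta\downarrow 0}\frac{\hat{Y}_t^{\Delta}}{\Delta} = E_{P^{\ast}}[\xi] = \lim_{\Delta\uparrow 0}\frac{\hat{Y}_t^{\Delta}}{\Delta} = \partial_{x-}u(t,x).
\]

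The main obstacle is precisely the construction of a common optimizer $P^{\Delta}$. Heuristically, the condition $E_P[K_T^{t,x}] = 0$ forces the density $d\langle B\rangle/ds$ under $P$ to attain the sup in the definition of $G$ at each time along the path of $X^{t,x}$; on the non-degenerate region $\sigma^2 \geq c_0$, this sup is a selector determined by the sign of $\sigma^2 \partial_{xx}^2 u + 2h \partial_x u + 2g(\cdot, u, \sigma \partial_x u)$, which should depend continuously on the initial value $x$ for $|\Delta|$ small. Because $u$ is only a viscosity solution of the associated fully nonlinear PDE and $\partial_{xx}^2 u$ need not exist classically, this argument has to be carried out at the SDE/BSDE level: one would exploit the invertibility $dB = \sigma^{-1}(dX - b\,ds - h\,d\langle B\rangle)$ available on the non-degenerate region, together with the Girsanov-type change-of-measure flexibility inside the $\tilde{G}$-expectation framework used in Lemma \ref{pro2-2}, to align the optimal densities for the two slightly shifted initial conditions and produce the required $P^{\Delta}$.
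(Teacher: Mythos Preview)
Your proposal takes a route entirely different from the paper's, and it has two genuine gaps.

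First, the construction of a common measure $P^{\Delta}\in\mathcal{P}_{t,x}\cap\mathcal{P}_{t,x+\Delta}$ is only sketched heuristically; you yourself identify it as ``the main obstacle'' and never carry it out. In the degenerate setting $G(a)=\tfrac12\bar\sigma^2 a^+$, the condition $E_P[K_T^{t,x}]=0$ forces $d\langle B\rangle_s/ds$ to equal $\bar\sigma^2$ or $0$ according to the \emph{sign} of $\sigma^2\partial_{xx}^2 u+2h\partial_x u+2g$ along the path, and there is no reason this sign pattern should coincide for the two initial points $x$ and $x+\Delta$, however small $\Delta$ is. The Girsanov-type alignment you allude to would have to create a measure under which \emph{both} non-increasing martingales $K^{t,x}$ and $K^{t,x+\Delta}$ vanish identically; nothing in the paper's toolkit gives this.

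Second, even granting such $P^{\Delta}$, your displayed chain $\partial_{x+}u=\lim_{\Delta\downarrow 0}\hat Y_t^{\Delta}/\Delta=E_{P^{\ast}}[\xi]=\lim_{\Delta\uparrow 0}\hat Y_t^{\Delta}/\Delta=\partial_{x-}u$ is not justified: the weak limit $P^{\ast}$ of $P^{\Delta}$ along $\Delta\downarrow 0$ need not coincide with the weak limit along $\Delta\uparrow 0$, so you obtain $\partial_{x+}u=E_{P^{\ast}_+}[\xi]$ and $\partial_{x-}u=E_{P^{\ast}_-}[\xi]$ for possibly different $P^{\ast}_\pm\in\mathcal{P}_{t,x}$, which is exactly the statement of Theorem \ref{th3-4} and proves nothing new. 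A related warning sign is that your argument never uses assumption (A4).

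The paper's proof is a contradiction argument through the \emph{time} variable, and this is where (A4) enters. One writes the dynamic programming relation (\ref{e3-22}) on $[t-\delta,t]$ and uses Theorem \ref{th3-6} (which needs (A4)) to get $\delta^{-1}(u(t-\delta,x)-u(t,x))\to -\partial_{t-}u(t,x)\in\mathbb{R}$. On the other hand, if $\gamma:=\partial_{x+}u(t,x)-\partial_{x-}u(t,x)>0$, then with $\xi_\delta=\sigma(t,x)(B_t-B_{t-\delta})$ one shows
\[
\delta^{-1}\hat{\mathbb{E}}\bigl[u(t,x+\xi_\delta)-u(t,x)\bigr]\ \ge\ \frac{\gamma}{4}\,\delta^{-1}\hat{\mathbb{E}}[|\xi_\delta|]\ -\ O(\sqrt{\delta})\ =\ \frac{\gamma}{4}|\sigma(t,x)|\,\hat{\mathbb{E}}[|B_1|]\,\delta^{-1/2}\ -\ O(\sqrt{\delta})\ \to\ +\infty,
\]
using that the kink contributes at order $\sqrt{\delta}$. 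After checking that replacing $X_t^{t-\delta,x}$ by $x+\xi_\delta$ and dropping the $f,g$ integrals costs only $O(\delta)$, this contradicts the finiteness of $\partial_{t-}u(t,x)$. The hypothesis $\sigma(t,x)\neq 0$ is used exactly to make $\hat{\mathbb{E}}[|\xi_\delta|]\sim\sqrt{\delta}$ rather than $0$.
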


\begin{proof}
We first sketch the properties of $u$, which is the same as in the proof of
Theorem 4.5 in \cite{HJPS}. By Propositions \ref{pro2-1} and \ref{pro3-1}, we
can get that, for $s\in \lbrack0,T]$, $x_{1}$, $x_{2}\in \mathbb{R}$, $p\geq2$,%
\begin{equation}
|u(s,x_{1})-u(s,x_{2})|\leq C(1+|x_{1}|^{m}+|x_{2}|^{m})|x_{1}-x_{2}|\text{,
}|u(s,x_{1})|\leq C(1+|x_{1}|^{m+1}), \label{e3-23}%
\end{equation}%
\begin{equation}
\mathbb{\hat{E}}\left[  \sup_{s\leq r\leq T}|Y_{r}^{s,x_{1}}|^{p}+\left(
\int_{s}^{T}|Z_{r}^{s,x_{1}}|^{2}d\langle B\rangle_{r}\right)  ^{p/2}%
+|K_{T}^{s,x_{1}}|^{p}\right]  \leq C(1+|x_{1}|^{(m+1)p}), \label{e3-24}%
\end{equation}
where $C>0$ depends on $L_{1}$, $\bar{\sigma}$, $p$ and $T$. For each $0\leq
t_{1}<t_{2}\leq T$ and $x_{1}\in \mathbb{R}$, by (i) of Proposition
\ref{pro3-2}, we know%
\begin{equation}
u(t_{1},x_{1})=\mathbb{\hat{E}}\left[  u(t_{2},X_{t_{2}}^{t_{1},x_{1}}%
)+\int_{t_{1}}^{t_{2}}f(r,X_{r}^{t_{1},x_{1}},Y_{r}^{t_{1},x_{1}}%
)dr+\int_{t_{1}}^{t_{2}}g(r,X_{r}^{t_{1},x_{1}},Y_{r}^{t_{1},x_{1}}%
,Z_{r}^{t_{1},x_{1}})d\langle B\rangle_{r}\right]  . \label{e3-22}%
\end{equation}
It follows from (\ref{e3-4}), (\ref{e3-23}), (\ref{e3-24}), (\ref{e3-22}) and
H\"{o}lder's inequality, we obtain%
\begin{equation}
|u(t_{1},x_{1})-u(t_{2},x_{1})|\leq C(1+|x_{1}|^{m+1})\sqrt{t_{2}-t_{1}},
\label{new-e3-25}%
\end{equation}
where $C>0$ depends on $L_{1}$, $\bar{\sigma}$ and $T$.

We then take $t_{1}=t-\delta$ with $\delta \in(0,t)$, $t_{2}=t$ and $x_{1}=x$
in (\ref{e3-22}). By Theorem \ref{th3-6}, we know that
\begin{equation}
\lim_{\delta \downarrow0}\delta^{-1}(u(t-\delta,x)-u(t,x))=-\partial
_{t-}u(t,x)\in \mathbb{R}. \label{e3-25}%
\end{equation}
In the following, we will prove that%
\begin{equation}
\mathbb{\hat{E}}\left[  |u(t,X_{t}^{t-\delta,x})-u(t,x+\sigma(t,x)(B_{t}%
-B_{t-\delta}))|\right]  \leq C\delta, \label{e3-26}%
\end{equation}%
\begin{equation}
\mathbb{\hat{E}}\left[  \int_{t-\delta}^{t}|f(r,X_{r}^{t-\delta,x}%
,Y_{r}^{t-\delta,x})|dr+\int_{t-\delta}^{t}|g(r,X_{r}^{t-\delta,x}%
,Y_{r}^{t-\delta,x},Z_{r}^{t-\delta,x})|d\langle B\rangle_{r}\right]  \leq
C\delta, \label{e3-27}%
\end{equation}%
\begin{equation}
\lim_{\delta \downarrow0}\delta^{-1}\mathbb{\hat{E}}\left[  u(t,x+\sigma
(t,x)(B_{t}-B_{t-\delta}))-u(t,x)\right]  =\infty \text{ if }\partial
_{x+}u(t,x)>\partial_{x-}u(t,x), \label{e3-28}%
\end{equation}
where the constant $C>0$ depends on $x$, $L_{1}$, $L_{2}$, $m$, $m_{1}$,
$\bar{\sigma}$ and $T$. If (\ref{e3-26}), (\ref{e3-27}) and (\ref{e3-28})
hold, we can get $\partial_{x+}u(t,x)=\partial_{x-}u(t,x)$ by (\ref{e3-25}).

Noting that%
\[
\mathbb{\hat{E}}\left[  \int_{t-\delta}^{t}|\sigma(r,X_{r}^{t-\delta
,x})-\sigma(t,x)|^{2}d\langle B\rangle_{r}\right]  \leq C\int_{t-\delta}%
^{t}\mathbb{\hat{E}}[|X_{r}^{t-\delta,x}-x|^{2}]dr+C\delta^{3}\leq C\delta
^{2},
\]
we get (\ref{e3-26}) by (\ref{e3-23}). By (i) of Proposition \ref{pro3-2}, we
know $Y_{r}^{t-\delta,x}=u(r,X_{r}^{t-\delta,x})$. Then we get%
\[%
\begin{array}
[c]{rl}%
Y_{s}^{t-\delta,x}-u(t,x)= & u(t,X_{t}^{t-\delta,x})-u(t,x)+\int_{s}%
^{t}g(r,X_{r}^{t-\delta,x},u(r,X_{r}^{t-\delta,x}),Z_{r}^{t-\delta,x})d\langle
B\rangle_{r}\\
& +\int_{s}^{t}f(r,X_{r}^{t-\delta,x},u(r,X_{r}^{t-\delta,x}))dr-\int_{s}%
^{t}Z_{r}^{t-\delta,x}dB_{r}-(K_{t}^{t-\delta,x}-K_{s}^{t-\delta,x}).
\end{array}
\]
By (\ref{e2-7}) in Proposition \ref{pro2-1}, (\ref{e3-23}) and
(\ref{new-e3-25}), we obtain%
\begin{align*}
\mathbb{\hat{E}}\left[  \int_{t-\delta}^{t}|Z_{r}^{t-\delta,x}|^{2}d\langle
B\rangle_{r}\right]   &  \leq C\mathbb{\hat{E}}\left[  \sup_{s\in \lbrack
t-\delta,t]}|u(s,X_{s}^{t-\delta,x})-u(t,x)|^{2}\right]  +C\delta^{2}\\
&  \leq C\mathbb{\hat{E}}\left[  \sup_{s\in \lbrack t-\delta,t]}|u(s,X_{s}%
^{t-\delta,x})-u(s,x)|^{2}\right]  +C\delta \\
&  \leq C\delta.
\end{align*}
Then we can easily get (\ref{e3-27}) by H\"{o}lder's inequality.

Now we prove (\ref{e3-28}). Set $\xi_{\delta}=\sigma(t,x)(B_{t}-B_{t-\delta}%
)$, we have%
\begin{align*}
\frac{u(t,x+\xi_{\delta})-u(t,x)}{\delta}=  &  \frac{[u(t,x+\xi_{\delta
})-u(t,x)-\partial_{x+}u(t,x)\xi_{\delta}]I_{\{ \xi_{\delta}>0\}}%
+\partial_{x+}u(t,x)\xi_{\delta}^{+}}{\delta}\\
&  +\frac{[u(t,x+\xi_{\delta})-u(t,x)-\partial_{x-}u(t,x)\xi_{\delta}]I_{\{
\xi_{\delta}<0\}}-\partial_{x-}u(t,x)\xi_{\delta}^{-}}{\delta}.
\end{align*}
If $\partial_{x+}u(t,x)>\partial_{x-}u(t,x)$, then there exists an $l>0$ such
that%
\[
|u(t,x+x^{\prime})-u(t,x)-\partial_{x+}u(t,x)x^{\prime}|\leq \frac{\gamma}%
{4}x^{\prime}\text{ for }x^{\prime}\in \lbrack0,l],
\]%
\[
|u(t,x+x^{\prime})-u(t,x)-\partial_{x-}u(t,x)x^{\prime}|\leq-\frac{\gamma}%
{4}x^{\prime}\text{ for }x^{\prime}\in \lbrack-l,0],
\]
where $\gamma=\partial_{x+}u(t,x)-\partial_{x-}u(t,x)$. Then, by
(\ref{e3-23}), we obtain%
\begin{align*}
&  \frac{|u(t,x+\xi_{\delta})-u(t,x)-\partial_{x+}u(t,x)\xi_{\delta}|I_{\{
\xi_{\delta}>0\}}}{\delta}\\
&  \leq C(1+|\xi_{\delta}|^{m})\frac{|\xi_{\delta}|}{\delta}I_{\{ \xi_{\delta
}>l\}}+\frac{\gamma}{4}\frac{\xi_{\delta}}{\delta}I_{\{0<\xi_{\delta}\leq
l\}}\\
&  \leq C(1+|\xi_{\delta}|^{m})\frac{|\xi_{\delta}|^{3}}{\delta l^{2}}%
+\frac{\gamma}{4}\frac{\xi_{\delta}^{+}}{\delta},
\end{align*}
where the constant $C>0$ depends on $x$, $L_{1}$, $m$, $\bar{\sigma}$ and $T$.
Similarly, we have%
\[
\frac{|u(t,x+\xi_{\delta})-u(t,x)-\partial_{x-}u(t,x)\xi_{\delta}|I_{\{
\xi_{\delta}<0\}}}{\delta}\leq C(1+|\xi_{\delta}|^{m})\frac{|\xi_{\delta}%
|^{3}}{\delta l^{2}}+\frac{\gamma}{4}\frac{\xi_{\delta}^{-}}{\delta}.
\]
Noting that $\partial_{x+}u(t,x)\xi_{\delta}^{+}-\partial_{x-}u(t,x)\xi
_{\delta}^{-}=\frac{\gamma}{2}|\xi_{\delta}|+\frac{1}{2}[\gamma+2\partial
_{x-}u(t,x)]\xi_{\delta}$ we get%
\[
\frac{u(t,x+\xi_{\delta})-u(t,x)}{\delta}\geq \frac{\gamma}{4}\frac
{|\xi_{\delta}|}{\delta}+\frac{1}{2}[\gamma+2\partial_{x-}u(t,x)]\frac
{\xi_{\delta}}{\delta}-2C(1+|\xi_{\delta}|^{m})\frac{|\xi_{\delta}|^{3}%
}{\delta l^{2}}.
\]
Since $\mathbb{\hat{E}}[\xi_{\delta}]=\mathbb{\hat{E}}[-\xi_{\delta}]=0$,
$\mathbb{\hat{E}}[|\xi_{\delta}|]=|\sigma(t,x)|\mathbb{\hat{E}}[|B_{1}%
|]\sqrt{\delta}$, $\mathbb{\hat{E}}[|\xi_{\delta}|^{6}]=|\sigma(t,x)|^{6}%
\mathbb{\hat{E}}[|B_{1}|^{6}]\delta^{3}$ and
\[
\delta^{-1}\mathbb{\hat{E}}\left[  u(t,x+\xi_{\delta})-u(t,x)\right]
\geq \delta^{-1}\left(  \frac{\gamma}{4}\mathbb{\hat{E}}\left[  |\xi_{\delta
}|\right]  -\frac{2C}{l^{2}}\sqrt{\mathbb{\hat{E}}[(1+|\xi_{\delta}|^{m}%
)^{2}]\mathbb{\hat{E}}[|\xi_{\delta}|^{6}]}\right)  ,
\]
we obtain (\ref{e3-28}). The proof is completed.
\end{proof}

Finally, we study $\partial_{xx}^{2}u(t,x)$. We need the following assumption.

\begin{description}
\item[(A5)] $b_{xx}^{\prime \prime}$, $h_{xx}^{\prime \prime}$, $\sigma
_{xx}^{\prime \prime}$, $f_{xx}^{\prime \prime}$, $f_{xy}^{\prime \prime}$,
$f_{yy}^{\prime \prime}$, $g_{xx}^{\prime \prime}$, $g_{xy}^{\prime \prime}$,
$g_{xz}^{\prime \prime}$, $g_{yy}^{\prime \prime}$, $g_{yz}^{\prime \prime}$,
$g_{zz}^{\prime \prime}$ are continuous in $(s,x,y,z)$ and bounded by a
constant $L_{3}>0$.
\end{description}

\begin{theorem}
Suppose that (A1)-(A3) and (A5) hold. Then, for each $(t,x)\in \lbrack
0,T)\times \mathbb{R}$, we have%
\begin{equation}
\Delta^{-1}\left[  \partial_{x-}u(t,x+\Delta)-\partial_{x+}u(t,x)\right]
\geq-C(1+|x|^{2m})\text{ for }\Delta \in(0,1], \label{e3-29}%
\end{equation}%
\begin{equation}
\Delta^{-1}\left[  \partial_{x+}u(t,x+\Delta)-\partial_{x-}u(t,x)\right]
\geq-C(1+|x|^{2m})\text{ for }\Delta \in \lbrack-1,0), \label{e3-30}%
\end{equation}
where the constant $C>0$ depends on $L_{1}$, $L_{3}$, $\bar{\sigma}$ and $T$.
\end{theorem}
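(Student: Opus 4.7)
The plan is to adapt the method of Theorem \ref{th2-5} to the present setting, but to use the variational formulas of Theorem \ref{th3-4} for $\partial_{x\pm}u$ in place of the pointwise formula $\partial_x u_\varepsilon=E_P[\Gamma_T\varphi'(B_T)]$ that was used there. Concretely, I will produce two-sided bounds
\[
\partial_{x+}u(t,x)\Delta-C(1+|x|^{2m})\Delta^2\;\le\;\hat{Y}^{\Delta}_t\;\le\;\partial_{x-}u(t,x+\Delta)\Delta+C(1+|x|^{2m})\Delta^2
\]
for $\Delta\in(0,1]$, where $\hat{Y}^{\Delta}_t=u(t,x+\Delta)-u(t,x)$. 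Dividing by $\Delta$ immediately gives (\ref{e3-29}), and (\ref{e3-30}) then follows by relabeling $x':=x+\Delta$, $\Delta':=-\Delta\in(0,1]$.

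For the lower bound, fix any $P\in\mathcal{P}_{t,x}$. Since $E_P[K_T^{t,x}]=0$ and $K^{t,x}$ is non-increasing, $K^{t,x}\equiv 0$ $P$-a.s., so $(\hat{Y}^{\Delta},\hat{Z}^{\Delta})$ satisfies a BSDE with terminal $\varphi(X_T^{t,x+\Delta})-\varphi(X_T^{t,x})$ and an $(-K^{t,x+\Delta})$ decreasing term. Under (A5) I Taylor-expand each of $\varphi$, $f$, $g$ to second order around $(X^{t,x},Y^{t,x},Z^{t,x})$; applying It\^o's formula to $\hat{Y}^{\Delta}_s\Gamma^{t,x}_s$ on $[t,T]$ the linear terms collapse into the same integrand that appears in Theorem \ref{th3-4} and one obtains, after using $\Gamma^{t,x}>0$ and $dK^{t,x+\Delta}\le 0$,
\[
\hat{Y}^{\Delta}_t\;\ge\;E_P\Big[\varphi'(X_T^{t,x})\hat{X}^{\Delta}_T\Gamma_T^{t,x}+\!\!\int_t^T\!f_x'(s)\hat{X}^{\Delta}_s\Gamma_s^{t,x}ds+\!\!\int_t^T\!g_x'(s)\hat{X}^{\Delta}_s\Gamma_s^{t,x}d\langle B\rangle_s\Big]+E_P[R^{\Delta}],
\]
where $R^{\Delta}$ collects the second-order remainders and is bounded pointwise by $L_3\Gamma_s^{t,x}(|\hat{X}^{\Delta}_s|^{2}+|\hat{Y}^{\Delta}_s|^{2}+|\hat{Z}^{\Delta}_s|^{2})$ plus an analogous term at time $T$ involving $|\hat{X}^{\Delta}_T|^{2}$.

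Next I replace $\hat{X}^{\Delta}_s$ by $\hat{X}^{t,x}_s\Delta+\tilde{X}^{\Delta}_s$: under (A5), following the proof of Lemma \ref{le3-3} but using the global Lipschitz bound on $b_x',h_x',\sigma_x'$ from assumption (A5) in (\ref{e3-12}) in place of the modulus $\omega_N(\varepsilon)$, one upgrades the limit in Lemma \ref{le3-3} to the quantitative estimate $\sup_{s\in[t,T]}\hat{\mathbb{E}}[|\tilde{X}^{\Delta}_s|^{p}]\le C\Delta^{2p}$ uniformly in $x$. Combining this with the estimates (\ref{e3-4})--(\ref{e3-6}) and the polynomial growth $|\varphi'(x)|,|f_x'|,|g_x'|\le C(1+|x|^{m})$, all remainder terms in $E_P[R^{\Delta}]$ are bounded by $C(1+|x|^{2m})\Delta^{2}$; the dominant $|x|^{2m}$ factor comes from the contribution of $|\hat{Y}^{\Delta}|^{2}$, which by (\ref{e3-5}) satisfies $\hat{\mathbb{E}}[\sup_s|\hat{Y}^{\Delta}_s|^{2}]\le C(1+|x|^{2m})\Delta^{2}$. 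Taking $\sup$ over $P\in\mathcal{P}_{t,x}$ and invoking Theorem \ref{th3-4} yields $\hat{Y}^{\Delta}_t\ge\partial_{x+}u(t,x)\Delta-C(1+|x|^{2m})\Delta^{2}$.

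For the upper bound I repeat the same computation but with $P^{\Delta}\in\mathcal{P}_{t,x+\Delta}$ (so $K^{t,x+\Delta}\equiv 0$ $P^{\Delta}$-a.s.) and with $\Gamma^{t,x+\Delta}$ in place of $\Gamma^{t,x}$, Taylor-expanding the coefficients around $(X^{t,x+\Delta},Y^{t,x+\Delta},Z^{t,x+\Delta})$; the sign of the $K^{t,x}$ term flips and the same remainder estimate applies, now with polynomial weight $C(1+|x+\Delta|^{2m})\le C(1+|x|^{2m})$. Taking $\inf$ over $P^{\Delta}\in\mathcal{P}_{t,x+\Delta}$ and using Theorem \ref{th3-4} gives the matching upper bound, completing the proof. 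The main technical obstacle is the quantitative $O(\Delta^{2})$ control of $\tilde{X}^{\Delta}=\hat{X}^{\Delta}-\hat{X}^{t,x}\Delta$ under (A5), which is the only place where the qualitative Lemma \ref{le3-3} has to be sharpened; once this is in hand, tracking the exponent of $|x|$ through the Taylor remainders is routine bookkeeping with the estimates (\ref{e3-4})--(\ref{e3-7}).
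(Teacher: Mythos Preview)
Your proposal is correct and follows essentially the same route as the paper's own proof: both arguments obtain the lower bound $\hat{Y}^{\Delta}_t\ge\partial_{x+}u(t,x)\Delta-C(1+|x|^{2m})\Delta^2$ by applying It\^o's formula to $\hat{Y}^{\Delta}_s\Gamma^{t,x}_s$ under a $P\in\mathcal{P}_{t,x}$ and dropping the non-positive $K^{t,x+\Delta}$-integral, then the matching upper bound by the same computation with $\Gamma^{t,x+\Delta}$ under a $P^{\Delta}\in\mathcal{P}_{t,x+\Delta}$; the key technical input in both cases is the quantitative estimate $\sup_{s}\hat{\mathbb{E}}[|\hat{X}^{\Delta}_s-\hat{X}^{t,x}_s\Delta|^{p}]\le C\Delta^{2p}$ under (A5), together with (\ref{e3-4})--(\ref{e3-7}). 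The only cosmetic differences are that the paper uses weak compactness of $\mathcal{P}_{t,x}$ to pick a maximizing $P$ directly, whereas you fix an arbitrary $P$ and take the supremum afterwards (which avoids invoking compactness), and you reduce (\ref{e3-30}) to (\ref{e3-29}) by the relabeling $x':=x+\Delta$, $\Delta':=-\Delta$, whereas the paper writes ``Similarly''.
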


\begin{proof}
By the definition of $\mathcal{P}_{t,x}$, it is easy to verify that
$\mathcal{P}_{t,x}$ is weakly compact. Then we can choose a $P\in
\mathcal{P}_{t,x}$ such that%
\[
\partial_{x+}u(t,x)=E_{P}\left[  \varphi^{\prime}(X_{T}^{t,x})\hat{X}%
_{T}^{t,x}\Gamma_{T}^{t,x}+\int_{t}^{T}f_{x}^{\prime}(s)\hat{X}_{s}%
^{t,x}\Gamma_{s}^{t,x}ds+\int_{t}^{T}g_{x}^{\prime}(s)\hat{X}_{s}^{t,x}%
\Gamma_{s}^{t,x}d\langle B\rangle_{s}\right]
\]
in (\ref{e3-13}). Using the same notations as in the proof of Theorem
\ref{th3-4}, for $\Delta \in(0,1]$, we get by (\ref{e3-15}) that%
\[
\hat{Y}_{t}^{\Delta}\geq \Delta \partial_{x+}u(t,x)+E_{P}\left[  \tilde{\varphi
}(T)\Gamma_{T}^{t,x}+\int_{t}^{T}\tilde{f}(s)\Gamma_{s}^{t,x}ds+\int_{t}%
^{T}\tilde{g}(s)\Gamma_{s}^{t,x}d\langle B\rangle_{s}\right]  .
\]
Under the assumption (A5), it is easy to check that%
\[
|\tilde{\varphi}(T)|\leq C(1+|X_{T}^{t,x}|^{m})|\hat{X}_{T}^{\Delta}-\hat
{X}_{T}^{t,x}\Delta|+C|\hat{X}_{T}^{\Delta}|^{2}\text{, }|\tilde{f}(s)|\leq
C(1+|X_{s}^{t,x}|^{m})|\hat{X}_{s}^{\Delta}-\hat{X}_{s}^{t,x}\Delta
|+C(|\hat{X}_{s}^{\Delta}|^{2}+|\hat{Y}_{s}^{\Delta}|^{2}),
\]%
\[
|\tilde{g}(s)|\leq C(1+|X_{s}^{t,x}|^{m})|\hat{X}_{s}^{\Delta}-\hat{X}%
_{s}^{t,x}\Delta|+C(|\hat{X}_{s}^{\Delta}|^{2}+|\hat{Y}_{s}^{\Delta}%
|^{2}+|\hat{Z}_{s}^{\Delta}|^{2}),
\]
where $C>0$ depends on $L_{1}$ and $L_{3}$. We can also get%
\[
\sup_{s\in \lbrack t,T]}\mathbb{\hat{E}}\left[  \left \vert \hat{X}_{s}^{\Delta
}-\hat{X}_{s}^{t,x}\Delta \right \vert ^{p}\right]  \leq C\Delta^{2p}%
\]
for $p\geq2$ in the proof of Lemma \ref{le3-3} by $|\tilde{b}(s)|+|\tilde
{h}(s)|+|\tilde{\sigma}(s)|\leq C|\hat{X}_{s}^{\Delta}|^{2}$, where $C>0$
depends on $L_{1}$, $L_{3}$, $\bar{\sigma}$, $p$ and $T$. It follows from
(\ref{e3-4}), (\ref{e3-5}) and (\ref{e3-6}) that%
\begin{equation}
\left \vert E_{P}\left[  \tilde{\varphi}(T)\Gamma_{T}^{t,x}+\int_{t}^{T}%
\tilde{f}(s)\Gamma_{s}^{t,x}ds+\int_{t}^{T}\tilde{g}(s)\Gamma_{s}%
^{t,x}d\langle B\rangle_{s}\right]  \right \vert \leq C(1+|x|^{2m})\Delta^{2},
\label{e3-32}%
\end{equation}
where $C>0$ depends on $L_{1}$, $L_{3}$, $\bar{\sigma}$ and $T$. Thus%
\begin{equation}
\hat{Y}_{t}^{\Delta}\geq \Delta \partial_{x+}u(t,x)-C(1+|x|^{2m})\Delta^{2}.
\label{e3-31}%
\end{equation}
We can also choose a $P^{\Delta}\in \mathcal{P}_{t,x+\Delta}$ such that%
\begin{align*}
\partial_{x-}u(t,x+\Delta)=  &  E_{P^{\Delta}}\left[  \varphi^{\prime}%
(X_{T}^{t,x+\Delta})\hat{X}_{T}^{t,x+\Delta}\Gamma_{T}^{t,x+\Delta}+\int
_{t}^{T}f_{x}^{\prime}(s,X_{s}^{t,x+\Delta},Y_{s}^{t,x+\Delta})\hat{X}%
_{s}^{t,x+\Delta}\Gamma_{s}^{t,x+\Delta}ds\right. \\
&  \left.  +\int_{t}^{T}g_{x}^{\prime}(s,X_{s}^{t,x+\Delta},Y_{s}^{t,x+\Delta
},Z_{s}^{t,x+\Delta})\hat{X}_{s}^{t,x+\Delta}\Gamma_{s}^{t,x+\Delta}d\langle
B\rangle_{s}\right]  .
\end{align*}
Applying It\^{o}'s formula to $\hat{Y}_{s}^{\Delta}\Gamma_{s}^{t,x+\Delta}$ on
$[t,T]$ under $P^{\Delta}$, similar to (\ref{e3-15}) and the analysis of
(\ref{e3-32}), we can get%
\begin{equation}
\hat{Y}_{t}^{\Delta}\leq \Delta \partial_{x-}u(t,x+\Delta)+C(1+|x|^{2m}%
)\Delta^{2}, \label{e3-33}%
\end{equation}
where $C>0$ depends on $L_{1}$, $L_{3}$, $\bar{\sigma}$ and $T$. Then we
obtain (\ref{e3-29}) by (\ref{e3-31}) and (\ref{e3-33}). Similarly, we can
deduce (\ref{e3-30}).
\end{proof}

\begin{remark}
We can get similar estimates under the assumption
\[
|b_{xx}^{\prime \prime}(s,x)|\leq L_{4}(1+|x|^{m_{2}})
\]
for positive constant $L_{4}$ and positive integer $m_{2}$, similar for the
second derivatives of $h$, $\sigma$, $f$ and $g$.
\end{remark}

\bigskip

\end{document}